\newtheorem{theorem}{Theorem}[section]
\newtheorem{proposition}[theorem]{Proposition}
\newtheorem{corollary}[theorem]{Corollary}
\newtheorem{lemma}[theorem]{Lemma}
\theoremstyle{definition}
\newtheorem{definition}[theorem]{Definition}
\theoremstyle{remark}
\newtheorem*{construction}{Construction}
\newtheorem{remark}[theorem]{Remark}
\DeclareMathOperator*{\Spec}{Spec}
\begin{document}
	\title[THE PICARD GROUP OF THE STACK OF SMOOTH POINTED CYCLIC COVERS OF $\mathbb{P}^1$]{THE PICARD GROUP OF THE STACK OF POINTED SMOOTH CYCLIC COVERS OF THE PROJECTIVE LINE}
	\author{ALBERTO LANDI}
	\address{Brown University, 151 Thayer Street, Providence, RI 02912, USA}
	\email{alberto\_landi@brown.edu}
	
	\subjclass[2020]{14H10 (primary), 14C22, 14D23 (secondary)}
	
	\keywords{cyclic cover, hyperelliptic curve, Picard group, moduli}

	
		\begin{abstract}
			We study the stack $\mathcal{H}_{r,g,n}$ of $n$-pointed smooth cyclic covers of degree $r$ between smooth curves of genus $g$ and the projective line. We give two presentations of an open substack of $\mathcal{H}_{r,g,n}$ as a quotient stack, and we study its complement. Using this, we compute the integral Picard group of $\mathcal{H}_{r,g,n}$. Moreover, we obtain a very explicit description of the generators of the Picard group, which have evident geometric meaning. As a corollary of the computation, we get the integral Picard group of the stack $\mathcal{H}_{g,n}$ of $n$-pointed hyperelliptic curves of genus $g$. Finally, taking $g=2$ and recalling that $\mathcal{H}_{2,n}=\mathcal{M}_{2,n}$, we obtain $\mathrm{Pic}(\mathcal{M}_{2,n})$.
		\end{abstract}
	
	\maketitle
		\section{Introduction}
			Let us fix a base field $k$; for simplicity in the introduction we will assume $k=\mathbb{C}$, except when we mention otherwise. The stack of smooth, or stable, $n$-pointed curves of genus $g$ will be denoted by $\mathcal{M}_{g,n}$, respectively $\overline{\mathcal{M}}_{g,n}$.
			
			The problem of computing Picard groups of natural stacks of curves has a long history, starting from Mumford's seminal paper~\cite{Mum65}, where it has been shown that $\mathrm{Pic}(\mathcal{M}_{1,1})=\mathbb{Z}/12\mathbb{Z}$. This was later generalized in~\cite{FO10} to arbitrary base schemes.
			
			Since then a large number of papers have been written on this subject; we will only mention a few of those that are more relevant to our work.
			
			For $g\geq3$ and any $n$, it is shown in~\cite{Mum77} that $\mathrm{Pic}(\mathcal{M}_{g,n})$ and $\mathrm{Pic}(\overline{\mathcal{M}}_{g,n})$ are free abelian groups, while Harer proved that $\mathrm{Pic}(\overline{\mathcal{M}}_{g,n})$ is of rank $n+1$ in~\cite{Har83}; an explicit basis is exhibited in~\cite{AC87}.
			
			In~\cite{Vis98} it was shown by Vistoli that $\mathrm{Pic}(\mathcal{M}_2)=\mathbb{Z}/10\mathbb{Z}$; this was later generalized in~\cite{AV04} to the stack $\mathcal{H}_{g}$ of hyperelliptic curves of genus $g$ (recall $\mathcal{M}_2=\mathcal{H}_2$). There the result is that $\mathrm{Pic}(\mathcal{H}_{g})=\mathbb{Z}/m\mathbb{Z}$, where $m=8g+4$ if $g$ is odd, and $m=4g+2$ when $g$ is even.
			
			Actually, the result in~\cite{AV04} is about the stack $\mathcal{H}_{\text{sm}}(l,r,d)$ of smooth uniform cyclic covers of degree $r$ and branch degree $d$ of projective spaces of dimension $l$. There it is shown that the Picard group of $\mathcal{H}_{\text{sm}}(l,r,d)$ over a field of characteristic not dividing $2rd$ is cyclic of order $r(rd-1)^l\gcd(d,l+1)$. In that article, the main idea is to find a presentation of $\mathcal{H}_{\text{sm}}(l,r,d)$ as a quotient stack of an open subset of a representation of an algebraic group, in order to apply the results about equivariant Chow rings proved in~\cite{EG98}.
			
			We are interested in the case $l=1$, that is the stack of smooth uniform cyclic covers of the projective line, and we extend the results of~\cite{AV04} to the pointed case. In particular, when $r=2$ we get the stack of $n$-pointed hyperelliptic curves, which we simply denote by $\mathcal{H}_{g,n}$. In this direction there are two main articles from which we took inspiration.
			
			Firstly, Scavia (\cite{Sca20}) has computed the rational Picard group of $\mathcal{H}_{g,n}$ over $\mathbb{C}$, showing that it is a vector space of dimension $n$ over $\mathbb{Q}$. This is done using transcendental methods. The result for $n\leq2g+6$ was later generalized to fields of characteristics different from $2$ by Cannings and H. Larson in~\cite[Lemma 5.7]{CL22}, where they computed the rational Chow ring under those assumptions.
			
			Secondly, Pernice (\cite{Per22}) has computed the integral Chow ring of $\mathcal{H}_{g,1}$, showing in particular that $\mathrm{Pic}(\mathcal{H}_{g,1})\simeq\mathbb{Z}\oplus\mathbb{Z}/m\mathbb{Z}$, again with $m=8g+4$ if $g$ is odd, and $m=4g+2$ if $g$ is even.
			
			Clearly, these two articles inspired other works that are relevant for our paper, such as~\cite{EH21} by Edidin and Z. Hu, where the authors computed some relations between natural divisors in $\mathcal{H}_{g,n}$.
			
			We prove that $\mathrm{Pic}(\mathcal{H}_{g,n})\simeq\mathbb{Z}^n\oplus\mathbb{Z}/m\mathbb{Z}$ \/ over any field $k$ of characteristic not dividing $2(g+1)$, where $m$ is the same as above; in particular, we get the integral Picard group of $\mathcal{M}_{2,n}$. This is a corollary of a more general computation of the integral Picard group of $n$-pointed smooth uniform cyclic covers of degree $r$ and branch degree $d$ of the projective line, which we outline now.
			
			Let $k$ be any field and $r,g,n$ non negative integers with $g\geq2$ and $r\geq2$. We denote by $\mathcal{H}_{r,g,n}$ the stack over $k$ where an object over a $k$-scheme $S$ is given by a smooth curve $C$ of genus $g$ over $S$ and a smooth uniform cover $f:C\rightarrow P$ of degree $r$ over a Brauer-Severi scheme $P$ of relative dimension 1 over $S$, together with $n$ mutually disjoint sections of $C\rightarrow S$. See~\cite[Definitions 2.1, 2.3, 2.4]{AV04} for the definitions.
			
			It is known that the stack is non empty if and only if the number $d$ satisfying the relation $r(r-1)d=2g-2+2r$ is in $\mathbb{N}$, see~\cite[Theorem A]{PTT15}. Throughout the whole paper we will always assume that $d\in\mathbb{N}$ and that the characteristic of the base field $k$ does not divide $2rd$. In this case, $\mathcal{H}_{r,g,n}$ is smooth and geometrically connected, and $d$ is the degree of the branch divisor of $f$. 

			The main result of this paper is the following.
			\begin{theorem}\label{thm:mainintro}
				Let $r,g,n$ be non negative integers with $r\geq2$ and $g\geq2$. Suppose moreover that $d=(2g-2+2r)/r(r-1)\in\mathbb{N}$, and that the ground field $k$ is of characteristic not dividing $2rd$.
				\begin{itemize}
					\item If $d$ is even, then \/ $\mathrm{Pic}(\mathcal{H}_{r,g,n})\simeq\mathbb{Z}^{(r-2)\binom{n}{2}+n}\oplus\mathbb{Z}/2r(rd-1)\mathbb{Z}$.
					\item If $d$ is odd, then \/ $\mathrm{Pic}(\mathcal{H}_{r,g,n})\simeq\mathbb{Z}^{(r-2)\binom{n}{2}+n}\oplus\mathbb{Z}/r(rd-1)\mathbb{Z}$.
				\end{itemize}
				If $n<2$, we set $\binom{n}{2}$ to be 0.
			\end{theorem}
			It is worth stating Theorem~\ref{thm:mainintro} in the case of $r=2$, i.e. for the stack $\mathcal{H}_{g,n}$ of $n$-pointed hyperelliptic curves of genus $g$, in which case $d=g+1$. Here the case $n=1$ actually follows from the article~\cite{Per22}.
			\begin{corollary}\label{cor:mainintrohyperelliptic}
				Suppose that the ground field $k$ is of characteristic not dividing $2(g+1)$.
				\begin{itemize}
					\item If $g$ is odd, then \/ $\mathrm{Pic}(\mathcal{H}_{g,n})\simeq\mathbb{Z}^{n}\oplus\mathbb{Z}/(8g+4)\mathbb{Z}$.
					\item If $g$ is even, then \/ $\mathrm{Pic}(\mathcal{H}_{g,n})\simeq\mathbb{Z}^{n}\oplus\mathbb{Z}/(4g+2)\mathbb{Z}$.
				\end{itemize}
			\end{corollary}
			Recall that in the case of $g=2$ one has $\mathcal{H}_{2,n}=\mathcal{M}_{2,n}$, yielding the following Corollary.
			\begin{corollary}\label{cor:maingenus2}
				Suppose that the ground field $k$ is of characteristic not dividing $2$. Then,
				\[	\mathrm{Pic}(\mathcal{M}_{2,n})\simeq\mathbb{Z}^n\oplus\mathbb{Z}/10\mathbb{Z}.
				\]
			\end{corollary}
			This Corollary is also a consequence of~\cite[Theorem B]{FV20}, which computes the Picard group of $\overline{\mathcal{M}}_{g,n}$ and $\mathcal{M}_{g,n}$ for every $g$ and $n$, over fields of characteristic different from 2. Moreover, when $g\leq5$ their result does not require any assumption on the characteristic.
			\subsection*{The general idea.}
			Let
			\[
				\begin{tikzcd}
					\mathcal{C}_{r,g,n}\arrow[r,"f"] & \mathcal{P}_{r,g,n}\arrow[r,"\pi"] & \mathcal{H}_{r,g,n}
				\end{tikzcd}
			\]
			be the universal smooth uniform cyclic cover of degree $r$ and branch divisor of degree $d$ of the universal Brauer-Severi stack of relative dimension 1 over $\mathcal{H}_{r,g,n}$. By the universal Brauer-Severi stack we do not mean the universal family over $\mathcal{M}_{0,n}$, but its base change under the natural map $\mathcal{H}_{r,g,n}\rightarrow\mathcal{M}_{0,n}$. We also have $n$ universal sections $\widetilde{\sigma}_1,\ldots,\widetilde{\sigma}_n$ of $\pi\circ f$, and we can consider the `intermediate' sections $\sigma_1,\ldots,\sigma_n$ given by $\sigma_i=f\circ\widetilde{\sigma}_i$. Then $\sigma_i$ are sections of $\pi$. Although the $\widetilde{\sigma}_i$ have to be disjoint by definition, the $\sigma_i$ are not. This precludes us to always find a presentation of $\mathcal{H}_{r,g,n}$ similar to the ones found in~\cite[Corollary 4.2]{AV04} for $n=0$ and in~\cite[Corollary 1.8]{Per22} for $n=1$, $r=2$.
			
			The idea is to fix a generator $\alpha$ of $\mu_r$, and to consider the closed substacks $\mathcal{Z}^{i,j,l}_{r,g,n}$ of $\mathcal{H}_{r,g,n}$, with $i\not=j$ and $1\leq l\leq r-1$, defined by the locus where the section $\widetilde{\sigma}_j$ is obtained by $\widetilde{\sigma}_i$ applying $\alpha^l$. They are integral and smooth (see Remark~\ref{rmk:Zrgnijlsmooth}), and clearly $\mathcal{Z}_{r,g,n}^{i,j,l}=\mathcal{Z}_{r,g,n}^{j,i,-l}$. We denote with $\mathcal{Z}_{r,g,n}$ their union with the reduced structure. The complementary open substack $\mathcal{H}_{r,g,n}^{\text{far}}$ is exactly the locus over which the intermediate sections $\sigma_i$ are mutually disjoint. Now, by standard methods we get an exact sequence
			\begin{equation}\label{eq:exactintro}
				\begin{tikzcd}
					\bigoplus_{i<j,1\leq l\leq r-1}[\mathcal{Z}^{i,j,l}_{r,g,n}]\mathbb{Z}\arrow[r] & \mathrm{Pic}(\mathcal{H}_{r,g,n})\arrow[r] & \mathrm{Pic}(\mathcal{H}_{r,g,n}^{\text{far}})\arrow[r] & 0
				\end{tikzcd}
			\end{equation}
			hence we reduce to compute $\mathrm{Pic}(\mathcal{H}_{r,g,n}^{\text{far}})$ and all the relations between the $[\mathcal{Z}^{i,j,l}_{r,g,n}]$. The second part is taken care of by Corollary~\ref{cor:relationsZgnij} and Proposition~\ref{prop:injectivityfirstmap}.
			
			Since over $\mathcal{H}_{r,g,n}^{\text{far}}$ the intermediate sections are disjoint, we can extend the ideas of Pernice (\cite{Per22}) to find a presentation of $\mathcal{H}_{r,g,n}^{\text{far}}$ as a quotient stack of a locally closed subscheme of a product of affine spaces by an action of an affine smooth algebraic group $G_{r,g,n}$, see corollaries~\ref{cor:quotientclosedHgnfar1},~\ref{cor:quotientclosedHgnfar2} and~\ref{cor:quotientclosedHgnfar}. For $n=2,3$, this has already been done in~\cite{EH22}.
			
			For $n\leq rd+1$ we can find another presentation of $\mathcal{H}_{r,g,n}^{\text{far}}$, this time as a quotient stack of an open subscheme of a representation of $G_{r,g,n}$ (see Corollary~\ref{prop:quotientopenHgnfar}), similarly to what was done in~\cite{Per22}. This enables us to compute $\mathrm{Pic}(\mathcal{H}_{r,g,n}^{\text{far}})$ using the techniques of~\cite{AV04}, which are based on the theory of equivariant Chow groups developed in~\cite{EG98}.
			
			We cannot proceed in the same fashion for $n>rd+1$. In this case, we exploit the fact that $\mathcal{H}_{r,g,n}^{\text{far}}$ is a scheme for $n\geq rd+1$. Indeed, this allow us to compute its Picard group using the Grothendieck-Lefschetz Theorem, see Proposition~\ref{prop:surjectivepicard}.
			
			Finally, we reconstruct the Picard group of $\mathcal{H}_{r,g,n}$ from the computations above. To do that, in the case of $d$ even we find a section of $\mathrm{Pic}(\mathcal{H}_{r,g,n})\rightarrow\mathrm{Pic}(\mathcal{H}_{r,g,n}^{\text{far}})$ (see Lemma~\ref{lem:comparisonpicardHg2farHg1} and Lemma~\ref{lem:strongcomparisonpicardHgnfarHg0}). However, when $d$ is odd such section does not exist, and a bit more work is needed (see Lemma~\ref{lem:torsiondodd}). Putting everything together we get Theorem~\ref{thm:mainintro}.
			
			\subsection*{The structure of the paper.}
			In Section~\ref{sec:geometricdescription} we study the closed substacks $\mathcal{Z}_{r,g,n}^{i,j,l}$, in particular we find some relations between the divisors associated to them. Then we give the two presentations of $\mathcal{H}_{r,g,n}^{\text{far}}$ as a quotient stack, the second working only for $n\leq rd+1$. Moreover, we study an important divisor $\Delta_{r,g,n}$ of a scheme related to $\mathcal{H}_{r,g,n}^{\text{far}}$ for $n\leq rd+1$, which is relevant for the computation of the Picard group of $\mathcal{H}_{r,g,n}^{\text{far}}$.
			
			In Sections~\ref{sec:case<=rd+1} and~\ref{sec:>=2g+3} we compute $\mathrm{Pic}(\mathcal{H}_{r,g,n}^{\text{far}})$ for $1\leq n\leq rd+1$ and $n\geq rd+1$ respectively, using the presentations found in Section 2.
			
			In Section~\ref{sec:finalcomputation} we show that all the relations between the divisors $\mathcal{Z}_{r,g,n}^{i,j,l}$ are exactly those found in Section 2.
			Moreover, when $d$ is even we find a section of $\mathrm{Pic}(\mathcal{H}_{r,g,n})\rightarrow\mathrm{Pic}(\mathcal{H}_{r,g,n}^{\text{far}})$, concluding the computation in that case. Then, we deal with the case $d$ odd, which requires additional work. Finally, in the last subsection we describe a set of generators of the Picard group that have geometric meaning.
		\subsection*{Acknowledgements}
			I would like to thank my advisor Angelo Vistoli for suggesting the problem and for his guidance throughout the work. I would also like to thank Dan Edidin and Mattia Talpo for very useful discussions.
		\section{Description of $\mathcal{Z}_{r,g,n}^{i,j,l}$ and $\mathcal{H}_{r,g,n}^{\mathrm{far}}$}\label{sec:geometricdescription}
			We denote by $\mathcal{H}_{r,g,n}$ the stack where an object over a $k$-scheme $S$ is given by a $n$-pointed smooth curve $C$ of genus $g$ over $S$ and a smooth uniform cover $f:C\rightarrow P$ of degree $r$ of a Brauer-Severi scheme $P$ of relative dimension 1 over $S$, together with $n$ mutually disjoint sections of $C\rightarrow S$. See~\cite[Definitions 2.1, 2.3, 2.4]{AV04} for the definitions. Notice that the stack is non empty if and only if the number $d$ satisfying $r(r-1)d=2g-2+2r$ is in $\mathbb{N}$, see~\cite[Theorem A]{PTT15}. We will always assume that $d\in\mathbb{N}$ and that the characteristic of the ground field $k$ does not divide $2rd$, in which case $d$ is the degree of the branch divisor of $f$. Notice that for $r=2$ we have that $\mathcal{H}_{2,g,n}=:\mathcal{H}_{g,n}$ is the stack of $n$-pointed hyperelliptic curves, which is the case of most interest. For $n=0$, the stack $\mathcal{H}_{r,g,0}=:\mathcal{H}_{r,g}$ has been studied in the article~\cite{AV04}, where it was given a presentation of that stack as a quotient of an open subscheme of a representation of $\mathrm{GL}_2/\mu_{d}$, proving in particular that it is smooth and geometrically connected. Since the forgetful map $\mathcal{H}_{r,g,n}\rightarrow\mathcal{H}_{r,g}$ is smooth with geometrically connected fibers, $\mathcal{H}_{r,g,n}$ is smooth and geometrically connected for every $n\geq0$.
			
			Now, let
			\[
			\begin{tikzcd}
				\mathcal{C}_{r,g,n}\arrow[r,"f"] & \mathcal{P}_{r,g,n}\arrow[r,"\pi"] & \mathcal{H}_{r,g,n}
			\end{tikzcd}
			\]
			be the universal smooth uniform cyclic cover of degree $r$ and branch divisor of degree $d$ over the universal Brauer-Severi scheme of relative dimension 1 over $\mathcal{H}_{r,g,n}$.
			We also have $n$ universal sections $\widetilde{\sigma}_1,\ldots,\widetilde{\sigma}_n$ of $\pi\circ f$; consider the "intermediate" sections $\sigma_1,\ldots,\sigma_n$ given by $\sigma_i=f\circ\widetilde{\sigma}_i$. Then $\sigma_i$ are sections of $\pi$.
			We want to define rigorously the substacks $\mathcal{Z}_{r,g,n}^{i,j,l}$ and $\mathcal{H}_{r,g,n}^{\text{far}}$ mentioned in the introduction.
			\begin{construction}
			First of all, notice that $\sigma_i(\mathcal{H}_{r,g,n})$ defines a relative effective Cartier divisor on $\mathcal{P}_{r,g,n}$ over $\mathcal{H}_{r,g,n}$. Define the divisor $\mathcal{D}_{r,g,n}$ to be the sum of the $\sigma_i(\mathcal{H}_{r,g,n})$, which is therefore a relative effective Cartier divisor.
			It follows that the projection
			\[
				\begin{tikzcd}
					\pi|_{\mathcal{D}_{r,g,n}}:\mathcal{D}_{r,g,n}\arrow[r] & \mathcal{H}_{r,g,n}
				\end{tikzcd}
			\]
			is flat and finite of degree $n$. Let $\mathcal{H}_{r,g,n}^{\text{far}}\subset\mathcal{H}_{r,g,n}$ be the open substack over which that map is étale. Clearly, it is the locus over which the sections $\sigma_i$ are mutually disjoint.
			
			Let $\mathcal{Z}_{r,g,n}$ be the complement, with the following stack structure. Let $\mathcal{C}_{r,g}\rightarrow\mathcal{H}_{r,g}$ be the universal curve over $\mathcal{H}_{r,g}$. Then, $\mathcal{H}_{r,g,n}$ can be seen as the complement of the extended diagonal $\Delta=\bigcup\Delta_{i,j}$ in the $n$-fold fibered product
			\[
				C_{r,g}^n:=C_{r,g}\times_{\mathcal{H}_{r,g}}\ldots\times_{\mathcal{H}_{r,g}}C_{r,g}.
			\]
			Let $\alpha$ be a fixed generator of $\mu_{r}$, and $\alpha_i$ the automorphism of $C_{r,g}^n$ given by $\alpha$ on the $i$-th component and the identity on the others. Then, for all $i\not=j$ and $1\leq l\leq r-1$ we define
			\[
				\mathcal{Z}_{r,g,n}^{i,j,l}:=(\alpha_i^l)^{-1}(\Delta_{i,j})\cap\mathcal{H}_{r,g,n}.
			\]
			In particular $\mathcal{Z}_{r,g,n}^{i,j,l}$ is integral and smooth, since $\Delta_{i,j}\simeq\mathcal{C}_{r,g}^{n-1}$ which is integral and smooth, and obviously $\mathcal{Z}_{r,g,n}^{i,j,l}=\mathcal{Z}_{r,g,n}^{j,i,-l}$, so we usually consider $i<j$. Clearly, $\mathcal{Z}_{r,g,n}^{i,j}:=\bigcup_{l=1}^{r-1}\mathcal{Z}_{r,g,n}^{i,j,l}$ is the locus over which $\sigma_i$ and $\sigma_j$ intersects, and $\mathcal{Z}_{r,g,n}$ is the union of these substacks as $i\not=j$ vary. The natural choice is then to give $\mathcal{Z}_{r,g,n}$ and $\mathcal{Z}_{r,g,n}^{i,j}$ the reduced structure.
		\end{construction}		
			In the following subsection we study the divisors $\mathcal{Z}_{r,g,n}^{i,j,l}$, and we compute some relations between their classes in $\mathrm{Pic}(\mathcal{H}_{r,g,n})$. These are actually the only relations, as we will see in Section~\ref{sec:finalcomputation}, Proposition~\ref{prop:injectivityfirstmap}. In the second and third subsections we give two different presentations of $\mathcal{H}_{r,g,n}^{\text{far}}$ as a quotient stack, the second working only for $n\leq rd+1$. Both presentations will be used to compute the Picard group of that stack, see Sections~\ref{sec:case12},~\ref{sec:3<=n<=2g+3} and~\ref{sec:>=2g+3}. In the last subsection we study an important divisor $\Delta_{r,g,n}$ of a scheme related to $\mathcal{H}_{r,g,n}^{\text{far}}$ for $n\leq rd+1$, which is relevant for the computation of the Picard group of $\mathcal{H}_{r,g,n}^{\text{far}}$.
			\subsection{Description of $\mathcal{Z}_{r,g,n}^{i,j,l}$}
			We start by making some remarks about the geometry of $\mathcal{Z}_{r,g,n}^{i,j,l}$ and $\mathcal{Z}_{r,g,n}^{i,j}$.
			\begin{remark}\label{rmk:Zrgnijlsmooth}
				By construction, we have seen that for all $i\not=j$ the closed substack $\mathcal{Z}_{r,g,n}^{i,j,l}$ is integral and smooth. Moreover, it is possible to see $\mathcal{Z}_{r,g,n}^{i,j,l}$ as an open substack of $\mathcal{H}_{r,g,n-1}$. Indeed, we have isomorphisms $(\alpha_i^l)^{-1}(\Delta_{i,j})\simeq\Delta_{i,j}\simeq\mathcal{C}_{r,g}^{n-1}$, which induce an isomorphism between $\mathcal{Z}_{r,g,n}^{i,j,l}=(\alpha_i^l)^{-1}(\Delta_{i,j})\cap\mathcal{H}_{r,g,n}$ and an open substack of $\mathcal{H}_{r,g,n-1}$. Finally, the objects of $\mathcal{Z}_{r,g,n}^{i,j,l}$ over a $k$-scheme $S$ are objects of $\mathcal{H}_{r,g,n}$ where the section $\widetilde{\sigma}_j$ is obtained by $\widetilde{\sigma}_i$ applying $\alpha^l$.
			\end{remark}
			\begin{remark}
				Notice that two different $\mathcal{Z}_{r,g,n}^{i,j,l}$ and $\mathcal{Z}_{r,g,n}^{i',j',l'}$ do not always intersect. For instance, if $i=i'$, $j=j'$ and $l\not=l'$, then they are disjoint, hence varying $l$ the $\mathcal{Z}_{r,g,n}^{i,j,l}$ are actually the connected components of $\mathcal{Z}_{r,g,n}^{i,j}$, which is thus smooth. This shows that the objects of $\mathcal{Z}_{r,g,n}^{i,j}$ over a $k$-scheme $S$ are objects of $\mathcal{H}_{r,g,n}$ where $\sigma_i=\sigma_j$. Finally, at most $r$ sections of $\pi$ can all intersect at one point. In particular, in the case of $n=3$ and $r=2$, the closed substacks $\mathcal{Z}_{g,3}^{1,2}$, $\mathcal{Z}_{g,3}^{1,3}$, $\mathcal{Z}_{g,3}^{2,3}$ are actually the connected components of $\mathcal{Z}_{g,3}$.
			\end{remark}	
			Now, Remark~\ref{rmk:Zrgnijlsmooth} has the following consequence. In general, since $\mathcal{H}_{r,g,n}$ is smooth, we have an exact sequence
			\[
			\begin{tikzcd}
				K\arrow[r] & \mathrm{Pic}(\mathcal{H}_{r,g,n})\arrow[r] & \mathrm{Pic}(\mathcal{H}_{r,g,n}^{\text{far}})\arrow[r] & 0
			\end{tikzcd}				
			\]
			where the kernel $K$ is the free group generated by the classes of irreducible components of $\mathcal{Z}_{r,g,n}$, see~\cite[Proposition 1.9]{PTT15}. Since the $\mathcal{Z}_{r,g,n}^{i,j,l}$ are integral, the above exact sequence is
			\begin{equation}\label{eq:exactcoarse}
				\begin{tikzcd}
					\bigoplus_{i<j,1\leq l\leq r-1}[\mathcal{Z}^{i,j,l}_{r,g,n}]\mathbb{Z}\arrow[r] & \mathrm{Pic}(\mathcal{H}_{r,g,n})\arrow[r] & \mathrm{Pic}(\mathcal{H}_{r,g,n}^{\text{far}})\arrow[r] & 0.
				\end{tikzcd}
			\end{equation}
			Consider for a moment the case $r=2$, where necessarily $l=1$. Motivated by the fact that the rational Picard group of $\mathcal{H}_{g,n}$ over $\mathbb{C}$ is a vector space of dimension $n$, see~\cite[Theorem 1.1]{Sca20}, we would like to find relations between the classes $[\mathcal{Z}^{i,j}_{g,n}]$, in order to make the number of generators of the image drop to less or equal to $n$. Actually, we will later prove more generally that the rational Picard group of $\mathcal{H}_{r,g,n}$ has rank $(r-2)\binom{n}{2}+n$ over all fields of characteristic not dividing $2rd$, see Section~\ref{sec:finalcomputation}, hence providing an extension of the already mentioned~\cite[Theorem 1.1]{Sca20} (see also~\cite[Lemma 5.7]{CL22}). Of course, we find the relations for all $r\geq2$.
			
			First, we compute the invertible ideal sheaf of $\mathcal{Z}^{i,j}_{r,g,n}$, for all $i\not=j$. Define $\mathcal{D}_{r,g,n}^{i,j}$ as the relative effective Cartier divisor given by $\sigma_i(\mathcal{H}_{r,g,n})+\sigma_j(\mathcal{H}_{r,g,n})$, and consider the projection
			\[
				\begin{tikzcd}
					\pi|_{\mathcal{D}_{r,g,n}^{i,j}}:\mathcal{D}_{r,g,n}^{i,j}\arrow[r] & \mathcal{H}_{r,g,n}.
				\end{tikzcd}
			\]
			Then $\mathcal{Z}_{r,g,n}^{i,j}$ is the ramification locus of this map, with the reduced structure. There is another natural choice for the structure of $\mathcal{Z}_{r,g,n}^{i,j}$. Indeed, thanks to the description of flat double covers in characteristic different from 2 (see~\cite{AV04} for the more general case of uniform cyclic covers), the ramification locus can be also defined as the zero locus of a section of $(\det\mathcal{A}_{r,g,n}^{i,j})^{-2}$, called the discriminant section, where $\mathcal{A}_{r,g,n}^{i,j}=(\pi|_{\mathcal{D}_{r,g,n}^{i,j}})_*\mathcal{O}_{\mathcal{D}_{r,g,n}^{i,j}}$. The discriminant section is stable under base change. Of course, the structure as a stack that we obtain might be different from the one of $\mathcal{Z}_{r,g,n}^{i,j}$. Indeed, we prove that this is the case, by showing that the ideal sheaf of $\mathcal{Z}_{r,g,n}^{i,j}$ is $\det\mathcal{A}_{r,g,n}^{i,j}$, instead of $(\det\mathcal{A}_{r,g,n}^{i,j})^2$. First we study the intersection of the image of two sections of $\pi:\mathcal{P}_{r,g,n}\rightarrow\mathcal{H}_{r,g,n}$.
			\begin{lemma}\label{lem:intersectionofsections}
				Let $\mathcal{R}_{r,g,n}^{i,j}\subset\mathcal{P}_{r,g,n}$ the schematic intersection of $\sigma_i(\mathcal{H}_{r,g,n})$ and $\sigma_j(\mathcal{H}_{r,g,n})$. Then, $\mathcal{R}_{r,g,n}^{i,j}$ is isomorphic to $\mathcal{Z}_{r,g,n}^{i,j}$ via $\pi$, with inverse $\sigma_i$. In particular $\mathcal{R}_{r,g,n}^{i,j}$ is smooth.
			\end{lemma}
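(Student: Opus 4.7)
The plan is to split the statement in two: first, identify $\mathcal{R}_{r,g,n}^{i,j}$ with a closed substack of $\mathcal{H}_{r,g,n}$ via $\pi$; second, show that this substack coincides scheme-theoretically with $\mathcal{Z}_{r,g,n}^{i,j}$. Smoothness of $\mathcal{R}_{r,g,n}^{i,j}$ then follows from Remark~\ref{rmk:Zrgnijlsmooth} together with the observation made above that the $\mathcal{Z}_{r,g,n}^{i,j,l}$ are pairwise disjoint as $l$ varies, so that $\mathcal{Z}_{r,g,n}^{i,j}$ is a disjoint union of smooth pieces.

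For the first step, since $\pi$ is separated each section $\sigma_i$ is a closed immersion, so $\sigma_i(\mathcal{H}_{r,g,n})$ is closed in $\mathcal{P}_{r,g,n}$ and $\pi$ restricts there to an isomorphism onto $\mathcal{H}_{r,g,n}$ with inverse $\sigma_i$. The scheme-theoretic intersection $\mathcal{R}_{r,g,n}^{i,j}\subset\sigma_i(\mathcal{H}_{r,g,n})$ is therefore mapped isomorphically by $\pi$ onto the closed substack $\mathcal{Y}:=\sigma_i^{-1}(\sigma_j(\mathcal{H}_{r,g,n}))\subset\mathcal{H}_{r,g,n}$, with inverse $\sigma_i|_{\mathcal{Y}}$.

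For the second step, I would first construct a closed immersion $\mathcal{Z}_{r,g,n}^{i,j}\hookrightarrow\mathcal{Y}$ by noting that on each component $\mathcal{Z}_{r,g,n}^{i,j,l}$ one has $\sigma_i=f\circ\widetilde{\sigma}_i=f\circ\alpha^l\circ\widetilde{\sigma}_i=f\circ\widetilde{\sigma}_j=\sigma_j$ (using $f\circ\alpha=f$), hence $\sigma_i|_{\mathcal{Z}_{r,g,n}^{i,j}}$ factors through $\sigma_j(\mathcal{H}_{r,g,n})$. The reverse containment of supports uses that away from branch points $f$ is \'etale of degree $r$, so whenever $\sigma_i=\sigma_j$ and $\widetilde{\sigma}_i,\widetilde{\sigma}_j$ are distinct then $\widetilde{\sigma}_j=\alpha^l\widetilde{\sigma}_i$ for some $l$ (the case where $\sigma_i=\sigma_j$ is branched is excluded by the disjointness of $\widetilde{\sigma}_i,\widetilde{\sigma}_j$). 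So $|\mathcal{Y}|=|\mathcal{Z}_{r,g,n}^{i,j}|$, and it remains to verify that $\mathcal{Y}$ is reduced along each component. Working \'etale-locally on $\mathcal{H}_{r,g,n}$ after trivializing $\mathcal{P}_{r,g,n}$ as $\mathbb{P}^1\times\mathcal{H}_{r,g,n}$ and passing to an affine chart containing the image of $\sigma_i$, the ideal of $\sigma_j(\mathcal{H}_{r,g,n})$ is generated by $x-s_j$ for some $s_j\in\mathcal{O}(\mathcal{H}_{r,g,n})$, and pulling back along $\sigma_i$ cuts out $\mathcal{Y}$ locally by the single equation $s_i-s_j$.

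The main obstacle will be verifying that $s_i-s_j$ has nonvanishing differential at a generic point of each $\mathcal{Z}_{r,g,n}^{i,j,l}$, which would guarantee that $\mathcal{Y}$ is smooth of pure codimension one and hence equal to $\mathcal{Z}_{r,g,n}^{i,j}$. I would handle this via a first-order deformation argument in the moduli problem: one can deform the section $\widetilde{\sigma}_j$ independently of $\widetilde{\sigma}_i$ (keeping them disjoint from each other and from the remaining sections), so that $s_j$ varies while $s_i$ stays fixed, and therefore the differential of $s_i-s_j$ is nonzero in the corresponding tangent direction, which closes the argument.
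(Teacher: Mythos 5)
Your argument is correct, but it takes a different route from the paper, which disposes of the statement in one line: after the same first reduction (identifying $\mathcal{R}_{r,g,n}^{i,j}$ via $\pi$ with the fiber product $\mathcal{H}_{r,g,n}\times_{\sigma_i,\mathcal{P}_{r,g,n},\sigma_j}\mathcal{H}_{r,g,n}$), the paper simply asserts that this fiber product \emph{is} $\mathcal{Z}_{r,g,n}^{i,j}$, ``essentially by definition''. The implicit content there is that away from the branch divisor the cover $f$ is a $\mu_r$-torsor, so $\mathcal{C}\times_{\mathcal{P}}\mathcal{C}$ decomposes scheme-theoretically as the disjoint union of the graphs of the powers of $\alpha$, while the totally ramified locus is irrelevant because $\widetilde{\sigma}_i$ and $\widetilde{\sigma}_j$ are disjoint (your own observation); pulling back along $(\widetilde{\sigma}_i,\widetilde{\sigma}_j)$ then gives the scheme-theoretic equality $\{\sigma_i=\sigma_j\}=\bigsqcup_l\{\widetilde{\sigma}_j=\alpha^l\widetilde{\sigma}_i\}$ in one stroke, with reducedness for free. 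You instead prove the equality by comparing supports and then showing the locus $\mathcal{Y}=\{\sigma_i=\sigma_j\}$ is reduced: it is cut out locally by the single equation $s_i-s_j$, whose differential is nonzero because deforming $\widetilde{\sigma}_j$ alone moves $\sigma_j$. Two points you should make explicit to close this version: the reason the deformation of $\widetilde{\sigma}_j$ actually moves $s_j$ to first order is that $f$ is unramified at $\widetilde{\sigma}_j(s)$ at any point of $\mathcal{Z}_{r,g,n}^{i,j}$ (the same fact you used for the support comparison), and passing from nonvanishing of the differential at \emph{generic} points to reducedness of $\mathcal{Y}$ uses that $\mathcal{Y}$ is an effective Cartier divisor in the smooth (hence Cohen--Macaulay, hence no embedded components) stack $\mathcal{H}_{r,g,n}$, so generically reduced implies reduced, and a reduced closed substack with the same support as $\mathcal{Z}_{r,g,n}^{i,j}$ equals it. With those caveats your argument is complete; it is more hands-on and avoids invoking the decomposition of $\mathcal{C}\times_{\mathcal{P}}\mathcal{C}$, at the cost of an extra transversality computation that the torsor argument renders unnecessary.
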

			\begin{proof}
				It is enough to show that the diagram
				\[
				\begin{tikzcd}
					\mathcal{Z}_{r,g,n}^{i,j}\arrow[r,hookrightarrow]\arrow[d,hookrightarrow] & \mathcal{H}_{r,g,n}\arrow[d,"\sigma_j"]\\
					\mathcal{H}_{r,g,n}\arrow[r,"\sigma_i"'] & \mathcal{P}_{r,g,n}
				\end{tikzcd}
				\]
				is cartesian. This is essentially the definition of $\mathcal{Z}_{r,g,n}^{i,j}$.
			\end{proof}
			\begin{proposition}\label{prop:idealsheafZgnij}
				The ideal sheaf of $\mathcal{Z}_{r,g,n}^{i,j}$ is $\det\mathcal{A}_{r,g,n}^{i,j}$.
			\end{proposition}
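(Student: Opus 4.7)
The plan is to construct on $\mathcal{H}_{r,g,n}$ a short exact sequence
\[
0 \longrightarrow \mathcal{A}_{r,g,n}^{i,j} \longrightarrow \mathcal{O}_{\mathcal{H}_{r,g,n}}^{\oplus 2} \longrightarrow \mathcal{O}_{\mathcal{Z}_{r,g,n}^{i,j}} \longrightarrow 0,
\]
and then take determinants of the first injection to obtain a monomorphism $\det\mathcal{A}_{r,g,n}^{i,j}\hookrightarrow\mathcal{O}_{\mathcal{H}_{r,g,n}}$, whose image will be identified with the ideal sheaf $\mathcal{I}_{\mathcal{Z}_{r,g,n}^{i,j}}$.

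To produce the sequence I would push forward along $\pi$ the upstairs restriction sequence
\[
0\longrightarrow\mathcal{O}_{\mathcal{D}_{r,g,n}^{i,j}}\longrightarrow\mathcal{O}_{\sigma_i(\mathcal{H}_{r,g,n})}\oplus\mathcal{O}_{\sigma_j(\mathcal{H}_{r,g,n})}\longrightarrow\mathcal{O}_{\mathcal{R}_{r,g,n}^{i,j}}\longrightarrow 0
\]
on $\mathcal{P}_{r,g,n}$, and verify its exactness étale-locally. In an étale neighborhood where $\mathcal{P}_{r,g,n}\simeq\mathcal{H}_{r,g,n}\times\mathbb{A}^1_t$ with $\sigma_i=\{t=h_i\}$ and $\sigma_j=\{t=h_j\}$, one has $\mathcal{O}_{\mathcal{D}_{r,g,n}^{i,j}}=\mathcal{O}_{\mathcal{H}_{r,g,n}}[t]/((t-h_i)(t-h_j))$, free over $\mathcal{O}_{\mathcal{H}_{r,g,n}}$ on $\{1,t\}$, and the restriction map reads $a+bt\mapsto(a+bh_i,\,a+bh_j)$. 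Its kernel vanishes as soon as $h_i-h_j$ is a non-zero-divisor, and its cokernel is $\mathcal{O}_{\mathcal{H}_{r,g,n}}/(h_i-h_j)$, which is identified with $\mathcal{O}_{\mathcal{Z}_{r,g,n}^{i,j}}$ via Lemma~\ref{lem:intersectionofsections}. Since $\pi$ restricted to $\mathcal{D}_{r,g,n}^{i,j}$ is finite, pushforward preserves exactness and yields the desired SES, the middle term becoming $\mathcal{O}_{\mathcal{H}_{r,g,n}}^{\oplus 2}$ because $\sigma_i,\sigma_j$ are sections of $\pi$.

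The determinant step is then a direct local computation: the matrix of $\mathcal{A}_{r,g,n}^{i,j}\to\mathcal{O}_{\mathcal{H}_{r,g,n}}^{\oplus 2}$ in the bases above is $\begin{pmatrix}1 & h_i\\ 1 & h_j\end{pmatrix}$, of determinant $h_j-h_i$, so the induced injection $\det\mathcal{A}_{r,g,n}^{i,j}\hookrightarrow\mathcal{O}_{\mathcal{H}_{r,g,n}}$ has image $(h_i-h_j)=\mathcal{I}_{\mathcal{Z}_{r,g,n}^{i,j}}$. The only delicate point is the injectivity upstairs, i.e.\ the fact that $h_i-h_j$ is a non-zero-divisor; but this is immediate from Remark~\ref{rmk:Zrgnijlsmooth}, since $\mathcal{Z}_{r,g,n}^{i,j}$ is a smooth codimension-one closed substack of the smooth stack $\mathcal{H}_{r,g,n}$, hence an effective Cartier divisor. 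This is also what explains the factor-of-two discrepancy with the discriminant construction, which would produce the doubled ideal $(\det\mathcal{A}_{r,g,n}^{i,j})^2$ instead of $\det\mathcal{A}_{r,g,n}^{i,j}$.
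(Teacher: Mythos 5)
Your proposal is correct and follows essentially the paper's own route: you push forward the very same restriction sequence $0\to\mathcal{O}_{\mathcal{D}_{r,g,n}^{i,j}}\to\mathcal{O}_{\sigma_i(\mathcal{H}_{r,g,n})}\oplus\mathcal{O}_{\sigma_j(\mathcal{H}_{r,g,n})}\to\mathcal{O}_{\mathcal{R}_{r,g,n}^{i,j}}\to0$ along the finite map $\pi|_{\mathcal{D}_{r,g,n}^{i,j}}$ and identify the cokernel with $\mathcal{O}_{\mathcal{Z}_{r,g,n}^{i,j}}$ via Lemma~\ref{lem:intersectionofsections}, just as in the paper. The only (cosmetic) difference is the last step: you extract $\det\mathcal{A}_{r,g,n}^{i,j}\simeq I_{\mathcal{Z}_{r,g,n}^{i,j}}$ by an étale-local computation with the matrix $\bigl(\begin{smallmatrix}1&h_i\\1&h_j\end{smallmatrix}\bigr)$, whereas the paper notes globally that the surjection onto $\mathcal{O}_{\mathcal{Z}_{r,g,n}^{i,j}}$ factors through the automorphism $(f,f')\mapsto(f,f-f')$ of $\mathcal{O}_{\mathcal{H}_{r,g,n}}^{\oplus2}$, giving $\mathcal{A}_{r,g,n}^{i,j}\simeq\mathcal{O}_{\mathcal{H}_{r,g,n}}\oplus I_{\mathcal{Z}_{r,g,n}^{i,j}}$ directly.
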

			\begin{proof}
				The following argument could be made more general. Set
				\[
					\mathcal{D}_{r,g,n}^i:=\sigma_i(\mathcal{H}_{r,g,n})
				\]
				and similarly for $j$. We have a short exact sequence
				\[
					\begin{tikzcd}[row sep=small, column sep=normal]
						0\arrow[r] & \mathcal{O}_{\mathcal{D}_{r,g,n}^{i,j}}\arrow[r] & \mathcal{O}_{\mathcal{D}_{r,g,n}^i}\oplus\mathcal{O}_{\mathcal{D}_{r,g,n}^j}\arrow[r] & \mathcal{O}_{\mathcal{R}_{r,g,n}^{i,j}}\arrow[r] & 0\\
						& & (f,f')\arrow[u,phantom,sloped,"\in"]\arrow[r,mapsto] & (f|_{\mathcal{R}_{r,g,n}^{i,j}}-f'|_{\mathcal{R}_{r,g,n}^{i,j}})\arrow[u,phantom,sloped,"\in"]
					\end{tikzcd}
				\]
				Applying $\pi_*$ we get
				\[
				\begin{tikzcd}[row sep=small, column sep=normal]
					0\arrow[r] & \mathcal{A}_{r,g,n}^{i,j}\arrow[r] & \mathcal{O}_{\mathcal{H}_{r,g,n}}\oplus\mathcal{O}_{\mathcal{H}_{r,g,n}}\arrow[r] & \mathcal{O}_{\mathcal{Z}_{r,g,n}^{i,j}}\arrow[r] & 0.
				\end{tikzcd}
				\]
				Let $I_{\mathcal{Z}_{r,g,n}^{i,j}}$ be the ideal sheaf of $\mathcal{Z}_{r,g,n}^{i,j}$ in $\mathcal{H}_{r,g,n}^{i,j}$. The second map of the exact sequence factors through the automorphism of $\mathcal{O}_{\mathcal{H}_{r,g,n}}\oplus\mathcal{O}_{\mathcal{H}_{r,g,n}}$ given by $(f,f')\mapsto(f,f-f')$, followed by the second projection. Therefore, we have that $\mathcal{A}_{r,g,n}^{i,j}\simeq\mathcal{O}_{\mathcal{H}_{r,g,n}}\oplus I_{\mathcal{Z}_{r,g,n}^{i,j}}$, hence $\det\mathcal{A}_{r,g,n}^{i,j}\simeq I_{\mathcal{Z}_{r,g,n}^{i,j}}$, as wanted.
			\end{proof}
			Thanks to Corollary~\ref{prop:idealsheafZgnij}, we have reduced the problem of finding relations between the classes $[\mathcal{Z}_{r,g,n}^{i,j}]:=\sum_{l}[\mathcal{Z}_{r,g,n}^{i,j,l}]$ to finding relations between the sheaves $\det\mathcal{A}_{r,g,n}^{i,j}$.
			\begin{lemma}\label{lem:descriptionAgnij}
				Let $L_i$ be the dual of the invertible ideal sheaf of $\sigma_i(\mathcal{H}^{i,j}_{r,g,n})$ in $\mathcal{P}_{r,g,n}$, and let $\omega_{\pi}$ be the dualizing sheaf of $\pi$. Then,
				\[
					\det\mathcal{A}_{r,g,n}^{i,j}\simeq\pi_*(L_i\otimes L_j\otimes\omega_{\pi})^\vee.
				\]
			\end{lemma}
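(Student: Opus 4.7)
The plan is to extract $\det\mathcal{A}^{i,j}_{r,g,n}$ from the structure sequence of the Cartier divisor $\mathcal{D}^{i,j}_{r,g,n}\subset \mathcal{P}_{r,g,n}$, and then to apply relative Serre duality along the Brauer--Severi projection $\pi$.

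By the definition of $L_i$ and the equality $\mathcal{D}^{i,j}_{r,g,n}=\sigma_i(\mathcal{H}_{r,g,n})+\sigma_j(\mathcal{H}_{r,g,n})$, one has $L_i^{-1}\otimes L_j^{-1}\simeq \mathcal{O}_{\mathcal{P}_{r,g,n}}(-\mathcal{D}^{i,j}_{r,g,n})$, so that the ideal sheaf sequence of $\mathcal{D}^{i,j}_{r,g,n}$ reads
\[
0\to L_i^{-1}\otimes L_j^{-1}\to \mathcal{O}_{\mathcal{P}_{r,g,n}}\to \mathcal{O}_{\mathcal{D}^{i,j}_{r,g,n}}\to 0.
\]
I would push this forward by $\pi$. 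Since $\pi$ is a Brauer--Severi stack of relative dimension $1$, one has $\pi_*\mathcal{O}_{\mathcal{P}_{r,g,n}}=\mathcal{O}_{\mathcal{H}_{r,g,n}}$ and $R^1\pi_*\mathcal{O}_{\mathcal{P}_{r,g,n}}=0$; moreover, $L_i^{-1}\otimes L_j^{-1}$ has degree $-2$ on each geometric fibre, so cohomology and base change give $\pi_*(L_i^{-1}\otimes L_j^{-1})=0$, while $R^1\pi_*(L_i^{-1}\otimes L_j^{-1})$ is a line bundle on $\mathcal{H}_{r,g,n}$. The associated long exact sequence therefore degenerates to
\[
0\to \mathcal{O}_{\mathcal{H}_{r,g,n}}\to \mathcal{A}^{i,j}_{r,g,n}\to R^1\pi_*(L_i^{-1}\otimes L_j^{-1})\to 0,
\]
and taking determinants yields $\det\mathcal{A}^{i,j}_{r,g,n}\simeq R^1\pi_*(L_i^{-1}\otimes L_j^{-1})$.

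To conclude, I would invoke relative Serre duality for the smooth, proper morphism $\pi$ of relative dimension $1$, which identifies $R^1\pi_*\mathcal{F}$ with $\pi_*(\mathcal{F}^\vee\otimes\omega_\pi)^\vee$ for any vector bundle $\mathcal{F}$; specialising to $\mathcal{F}=L_i^{-1}\otimes L_j^{-1}$ gives exactly the claimed isomorphism. The only delicate point is the applicability of cohomology and base change and of Grothendieck duality on the algebraic stack $\mathcal{P}_{r,g,n}$; but since $\pi$ is smooth, proper, with geometric fibres $\mathbb{P}^1$, both statements reduce, via an étale cover of $\mathcal{H}_{r,g,n}$ on which $\pi$ becomes a projective line bundle, to the classical scheme-theoretic versions, so there is no genuine obstacle.
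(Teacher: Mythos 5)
Your proposal is correct and follows essentially the same route as the paper: push forward the ideal-sheaf sequence of $\mathcal{D}_{r,g,n}^{i,j}$, use the vanishing of $\pi_*(L_i^\vee\otimes L_j^\vee)$ and $\mathrm{R}^1\pi_*\mathcal{O}_{\mathcal{P}_{r,g,n}}$ to get the short exact sequence $0\to\mathcal{O}_{\mathcal{H}_{r,g,n}}\to\mathcal{A}_{r,g,n}^{i,j}\to\mathrm{R}^1\pi_*(L_i^\vee\otimes L_j^\vee)\to0$, take determinants, and conclude by relative duality. The only difference is that the paper cites~\cite[Lemma 1.10]{PTT15} for the vanishing and the duality isomorphism, whereas you justify these directly by cohomology and base change and relative Serre duality (with the reduction to an \'etale cover trivializing the Brauer--Severi stack), which is a fair substitute.
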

			\begin{proof}
				Consider the exact sequence
				\[
				\begin{tikzcd}
					0\arrow[r] & L_i^\vee\otimes L_j^\vee\arrow[r] & \mathcal{O}_{\mathcal{P}_{r,g,n}}\arrow[r] & \mathcal{O}_{D_{r,g,n}^{i,j}}\arrow[r] & 0.
				\end{tikzcd}
				\]
				Take the long exact sequence associated to $\pi_*$ and apply~\cite[Lemma 1.10]{PTT15}. We get that $\pi_*(L_i^\vee\otimes L_j^\vee)=\mathrm{R}^1\pi_*(\mathcal{O}_{\mathcal{P}_{r,g,n}})=0$, and we have an exact sequence
				\[
				\begin{tikzcd}
					0\arrow[r] & \mathcal{O}_{\mathcal{H}_{r,g,n}}\arrow[r] & \mathcal{A}_{r,g,n}^{i,j}\arrow[r] & \mathrm{R}^1\pi_*(L_i^\vee\otimes L_j^\vee)\arrow[r] & 0.
				\end{tikzcd}
				\]
				Moreover, again by~\cite[Lemma 1.10]{PTT15}, we have
				\[
					\mathrm{R}^1\pi_*(L_i^\vee\otimes L_j^\vee)\simeq\pi_*(L_i\otimes L_j\otimes\omega_{\pi})^\vee
				\]
				and it is an invertible sheaf. Applying the determinant yields the desired isomorphism.
			\end{proof}
			Clearly, for all $i\not=j$ the invertible sheaf $L_i\otimes L_j\otimes\omega_\pi$ is isomorphic to
			\[
			(L_1\otimes L_i\otimes\omega_\pi)\otimes(L_1\otimes L_j\otimes\omega_\pi)\otimes(L_2\otimes L_n\otimes\omega_\pi)\otimes(L_1\otimes L_2\otimes\omega_\pi)^\vee\otimes(L_1\otimes L_n\otimes\omega_\pi)^\vee
			\]
			If these relations passed through $\pi_*$, we would have found relations between the $\det\mathcal{A}_{r,g,n}^{i,j}$. Thankfully, this is the case, as the following Lemma shows.
			\begin{lemma}\label{lem:pi*deg0}
				Let $\mathcal{S}$ be an algebraic stack, $\pi:\mathcal{P}\rightarrow\mathcal{S}$ a Brauer-Severi stack of relative dimension 1 over $\mathcal{S}$, and $F_1$, $F_2$ two invertible sheaves of degree 0 on each fiber. Then $\pi_*F_1$, $\pi_*F_2$ and $\pi_*(F_1\otimes F_2)$ are invertible sheaves, their formation commutes with base change, and the natural map
				\[
				\begin{tikzcd}
					\pi_*F_1\otimes \pi_*F_2\arrow[r] & \pi_*(F_1\otimes F_2)
				\end{tikzcd}
				\]
				is an isomorphism.
			\end{lemma}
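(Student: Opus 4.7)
The plan is to reduce the statement étale-locally to the case of $\mathbb{P}^1_{\mathcal{S}}$ and then invoke cohomology and base change. Since $\mathcal{P}\to\mathcal{S}$ is a Brauer--Severi stack of relative dimension $1$, there is an étale cover $\mathcal{S}'\to\mathcal{S}$ such that $\mathcal{P}\times_{\mathcal{S}}\mathcal{S}'\simeq\mathbb{P}^1_{\mathcal{S}'}$. All the assertions (being invertible, compatibility with base change, being an isomorphism) are étale-local on $\mathcal{S}$, so I may assume $\mathcal{P}=\mathbb{P}^1_{\mathcal{S}}$.

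Next, since $F_1$ and $F_2$ have degree $0$ on each fiber, at any geometric point $s\to\mathcal{S}$ the restrictions $F_i|_{\pi^{-1}(s)}$ are trivial line bundles on $\mathbb{P}^1_{s}$, so $h^0(F_i|_s)=1$ and $h^1(F_i|_s)=0$. By Grauert's theorem (cohomology and base change), $\pi_*F_i$ is a line bundle on $\mathcal{S}$, $R^1\pi_*F_i=0$, and the formation of $\pi_*F_i$ commutes with arbitrary base change. The tensor product $F_1\otimes F_2$ has the same property of being of degree $0$ on each fiber, so the same applies to $\pi_*(F_1\otimes F_2)$.

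It remains to show that the natural multiplication map
\[
\mu:\pi_*F_1\otimes\pi_*F_2\longrightarrow\pi_*(F_1\otimes F_2)
\]
is an isomorphism. Since both source and target are invertible sheaves and the formation of both commutes with base change, I may check this at each geometric point $s\to\mathcal{S}$, where $\mu$ becomes the cup-product map $H^0(\mathbb{P}^1_s,F_1|_s)\otimes H^0(\mathbb{P}^1_s,F_2|_s)\to H^0(\mathbb{P}^1_s,F_1|_s\otimes F_2|_s)$; via the trivializations $F_i|_s\simeq\mathcal{O}_{\mathbb{P}^1_s}$ this is just the identification $k(s)\otimes k(s)\simeq k(s)$, which is an isomorphism. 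A map of line bundles that is an isomorphism on every fiber is an isomorphism, so we are done.

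There is no real obstacle here: once one notes that degree-zero line bundles on $\mathbb{P}^1$ are trivial, the proof is a textbook combination of étale descent and cohomology and base change.
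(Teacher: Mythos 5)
Your proof is correct and is essentially the paper's argument: the paper obtains invertibility and base change for the pushforwards by citing~\cite[Lemma 1.10]{PTT15} and then, just as you do, verifies the multiplication map on geometric fibers (after a harmless pullback along $\pi$), where degree-zero invertible sheaves on $\mathbb{P}^1$ are trivial and the map becomes the obvious isomorphism. Two small remarks: since $\mathcal{S}$ is an arbitrary algebraic stack (possibly non-reduced), you should invoke the cohomology-and-base-change theorem via $h^1(F_i|_s)=0$ rather than Grauert's criterion, which requires a reduced base (your parenthetical already points at the right statement); and the initial étale trivialization of $\mathcal{P}$ is not actually needed, since the fiberwise argument only uses that each geometric fiber is a projective line.
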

			\begin{proof}
				The first two statements follow from~\cite[Lemma 1.10]{PTT15}. Moreover, we know that $\pi_*\mathcal{O}_{\mathcal{P}}\simeq\mathcal{O}_{\mathcal{S}}$, 
				therefore to conclude it is enough to show that
				\[
				\begin{tikzcd}
					\pi^*\pi_*F_1\otimes\pi^*\pi_*F_2\arrow[r] & \pi^*\pi_*(F_1\otimes F_2)
				\end{tikzcd}
				\]
				is an isomorphism. Since this is a map between invertible sheaves which satisfy base change, it suffice to show that the map is surjective on the geometric fibers.
				On the geometric fibers the sheaves $F_1$, $F_2$ and $F_1\otimes F_2$ are isomorphic to the structure sheaf of the fiber, which is a projective line. It follows that on the geometric fiber over a point $s$ the previous map is
				\[
				\begin{tikzcd}
					\pi_s^*{\pi_s}_*\mathcal{O}_{\mathbb{P}^1}\otimes\pi_s^*{\pi_s}_*\mathcal{O}_{\mathbb{P}^1}\arrow[r] & \pi_s^*{\pi_s}_*(\mathcal{O}_{\mathbb{P}^1}\otimes\mathcal{O}_{\mathbb{P}^1})
				\end{tikzcd}
				\]
				which is an isomorphism.
			\end{proof}
			\begin{corollary}\label{cor:relationsZgnij}
				Let $i\not=j$ and both different from 1, and $n>2$. Then the following holds
				\[
				\det\mathcal{A}^{i,j}_{r,g,n}\simeq\det\mathcal{A}^{1,i}_{r,g,n}\otimes\det\mathcal{A}^{1,j}_{r,g,n}\otimes\det\mathcal{A}^{2,n}_{r,g,n}\otimes(\det\mathcal{A}^{1,2}_{r,g,n})^\vee\otimes(\det\mathcal{A}^{1,n}_{r,g,n})^\vee
				\]
				that is
				\[
				[\mathcal{Z}^{i,j}_{r,g,n}]=[\mathcal{Z}^{1,i}_{r,g,n}]+[\mathcal{Z}^{1,j}_{r,g,n}]+[\mathcal{Z}^{2,n}_{r,g,n}]-[\mathcal{Z}^{1,2}_{r,g,n}]-[\mathcal{Z}^{1,n}_{r,g,n}]
				\]
				as class of divisors. In particular, the kernel of the restriction homomorphism $\mathrm{Pic}(\mathcal{H}_{r,g,n})\rightarrow\mathrm{Pic}(\mathcal{H}_{r,g,n}^{\text{far}})$ is generated by $(r-2)\binom{n}{2}+n$ class of divisors.
			\end{corollary}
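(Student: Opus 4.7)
The plan is to bootstrap the identity on $L_a$-line bundles, which the excerpt already displays, through $\pi_*$ using Lemma~\ref{lem:pi*deg0}, and then apply Lemma~\ref{lem:descriptionAgnij} together with Proposition~\ref{prop:idealsheafZgnij}. Concretely, I would start by recording the tautological isomorphism in $\mathrm{Pic}(\mathcal{P}_{r,g,n})$,
\[
L_i\otimes L_j\otimes\omega_\pi\simeq(L_1\otimes L_i\otimes\omega_\pi)\otimes(L_1\otimes L_j\otimes\omega_\pi)\otimes(L_2\otimes L_n\otimes\omega_\pi)\otimes(L_1\otimes L_2\otimes\omega_\pi)^\vee\otimes(L_1\otimes L_n\otimes\omega_\pi)^\vee,
\]
which holds by simply counting multiplicities of each $L_k$ and of $\omega_\pi$ on both sides.

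Next I would verify the hypothesis of Lemma~\ref{lem:pi*deg0}: each $L_k^\vee$ is the ideal sheaf of a section of the Brauer--Severi stack $\pi$, hence cuts out a degree $1$ divisor on the geometric $\mathbb{P}^1$-fibers, so each $L_k$ has fiber-degree $1$; since $\omega_\pi$ has fiber-degree $-2$, every factor $L_a\otimes L_b\otimes\omega_\pi$ appearing above has fiber-degree $0$, and the same remains true after taking tensor products of such factors. Applying Lemma~\ref{lem:pi*deg0} iteratively pulls $\pi_*$ through the five-fold tensor product, and Lemma~\ref{lem:descriptionAgnij} (after dualizing) then turns the outcome into the stated isomorphism of the sheaves $\det\mathcal{A}_{r,g,n}^{\bullet,\bullet}$. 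Proposition~\ref{prop:idealsheafZgnij} immediately translates this into the divisor-class identity $[\mathcal{Z}^{i,j}_{r,g,n}]=[\mathcal{Z}^{1,i}_{r,g,n}]+[\mathcal{Z}^{1,j}_{r,g,n}]+[\mathcal{Z}^{2,n}_{r,g,n}]-[\mathcal{Z}^{1,2}_{r,g,n}]-[\mathcal{Z}^{1,n}_{r,g,n}]$.

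For the counting assertion, recall from~\eqref{eq:exactcoarse} that the kernel of $\mathrm{Pic}(\mathcal{H}_{r,g,n})\to\mathrm{Pic}(\mathcal{H}_{r,g,n}^{\mathrm{far}})$ is generated by the $(r-1)\binom{n}{2}$ classes $[\mathcal{Z}^{i,j,l}_{r,g,n}]$. The relation just established involves only the sums $\sum_l[\mathcal{Z}^{i,j,l}_{r,g,n}]=[\mathcal{Z}^{i,j}_{r,g,n}]$, so each relation lets me eliminate a single generator (say $[\mathcal{Z}^{i,j,1}_{r,g,n}]$) in favor of the remaining ones. The relations are non-trivial exactly for unordered pairs $\{i,j\}\subset\{2,\ldots,n\}$ with $\{i,j\}\neq\{2,n\}$, giving $\binom{n-1}{2}-1$ independent eliminations; a short computation yields $(r-1)\binom{n}{2}-(\binom{n-1}{2}-1)=(r-2)\binom{n}{2}+n$ remaining generators, which is the bound claimed.

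I do not expect a genuine obstacle here: the content is already assembled in the preceding lemmas, and the only delicate points are the fiber-degree check that legitimizes Lemma~\ref{lem:pi*deg0} and the bookkeeping in the combinatorial count. The main care is in being explicit that tensor products of degree-$0$ line bundles remain degree-$0$, so Lemma~\ref{lem:pi*deg0} can be iterated across the five-factor product.
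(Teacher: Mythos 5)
Your proposal is correct and follows the paper's own route: the tautological relation among the sheaves $L_a\otimes L_b\otimes\omega_\pi$ is pushed through $\pi_*$ via Lemma~\ref{lem:pi*deg0} (all factors being of fiber degree $0$), then translated into the divisor-class identity using Lemma~\ref{lem:descriptionAgnij} and Proposition~\ref{prop:idealsheafZgnij}. Your generator count, eliminating one $[\mathcal{Z}^{i,j,l}_{r,g,n}]$ per nontrivial relation for pairs $\{i,j\}\subset\{2,\ldots,n\}$ with $\{i,j\}\neq\{2,n\}$, is also the intended bookkeeping and yields the stated $(r-2)\binom{n}{2}+n$.
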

			\begin{proof}
				The relations between the sheaves $L_i\otimes L_j\otimes\omega_{\pi}$ pass through $\pi_*$ thanks to Lemma~\ref{lem:pi*deg0}.
			\end{proof}
			Of course one has to choose the divisors in such a way that they generate. We will see in Section~\ref{sec:finalcomputation} that these are the only relations between the classes $[\mathcal{Z}_{r,g,n}^{i,j,l}]$.
			\begin{corollary}
				If $n=2$, then there is an exact sequence
				\begin{equation}\label{eq:exactmain2}
					\begin{tikzcd}
						\mathbb{Z}^{r-1}\arrow[r] & \mathrm{Pic}(\mathcal{H}_{r,g,2})\arrow[r] & \mathrm{Pic}(\mathcal{H}_{r,g,2}^{\text{far}})\arrow[r] & 0.
					\end{tikzcd}
				\end{equation}
				If $n\geq3$, then there exists an exact sequence
				\begin{equation}\label{eq:exactmain}
					\begin{tikzcd}
						\mathbb{Z}^{(r-2)\binom{n}{2}+n}\arrow[r] & \mathrm{Pic}(\mathcal{H}_{r,g,n})\arrow[r] & \mathrm{Pic}(\mathcal{H}_{r,g,n}^{\text{far}})\arrow[r] & 0.
					\end{tikzcd}
				\end{equation}
			\end{corollary}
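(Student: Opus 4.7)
The plan is to post-compose the generating surjection of~\eqref{eq:exactcoarse} with a surjection from a smaller free group, using Corollary~\ref{cor:relationsZgnij} to eliminate redundant generators. Recall that~\eqref{eq:exactcoarse} realises $\mathrm{Pic}(\mathcal{H}_{r,g,n})\to\mathrm{Pic}(\mathcal{H}_{r,g,n}^{\text{far}})$ as a surjection whose kernel is already generated by the $(r-1)\binom{n}{2}$ classes $[\mathcal{Z}^{i,j,l}_{r,g,n}]$, so it suffices to exhibit a surjection from the stated free group onto this kernel.

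For $n=2$ there is only one pair $i<j$, so~\eqref{eq:exactcoarse} is already an exact sequence starting from $\mathbb{Z}^{r-1}$; precomposing with the projection $\mathbb{Z}^{r}\twoheadrightarrow\mathbb{Z}^{r-1}$ gives the desired~\eqref{eq:exactmain2}, with no relation used. For $n\geq 3$ I would change basis on $\bigoplus_{i<j,l}[\mathcal{Z}^{i,j,l}_{r,g,n}]\mathbb{Z}$ by replacing, for every pair $i<j$, the class $[\mathcal{Z}^{i,j,1}_{r,g,n}]$ with the sum $[\mathcal{Z}^{i,j}_{r,g,n}]=\sum_{l=1}^{r-1}[\mathcal{Z}^{i,j,l}_{r,g,n}]$. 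The image in $\mathrm{Pic}(\mathcal{H}_{r,g,n})$ is then generated by the $(r-2)\binom{n}{2}$ classes $[\mathcal{Z}^{i,j,l}_{r,g,n}]$ with $l\geq 2$ together with the $\binom{n}{2}$ sums $[\mathcal{Z}^{i,j}_{r,g,n}]$. Corollary~\ref{cor:relationsZgnij} expresses each $[\mathcal{Z}^{i,j}_{r,g,n}]$ with $2\leq i<j$ as an integer combination of the sums indexed by pairs with $i=1$ together with $[\mathcal{Z}^{2,n}_{r,g,n}]$; this identity collapses to the tautology $0=0$ exactly when $(i,j)=(2,n)$ and genuinely eliminates the left-hand side otherwise. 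Discarding the $\binom{n-1}{2}-1$ redundant sums leaves the $(r-2)\binom{n}{2}$ classes with $l\geq 2$, the $n-1$ sums indexed by pairs with $i=1$, and the single sum $[\mathcal{Z}^{2,n}_{r,g,n}]$, for a total of $(r-2)\binom{n}{2}+n$; this yields~\eqref{eq:exactmain}.

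The point that requires attention is the bookkeeping around the pair $(2,n)$: its relation is tautological, so $[\mathcal{Z}^{2,n}_{r,g,n}]$ cannot be eliminated and must be kept among the generators, which accounts for the term $+n$ rather than $+(n-1)$ in the final count. Modulo this observation, the statement is a direct unwinding of~\eqref{eq:exactcoarse} and Corollary~\ref{cor:relationsZgnij}.
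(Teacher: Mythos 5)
Your argument is correct and is essentially the paper's own proof: the paper deduces the corollary in one line by applying Corollary~\ref{cor:relationsZgnij} (whose ``in particular'' clause already contains the generator count $(r-2)\binom{n}{2}+n$) to the sequence~\eqref{eq:exactcoarse}, and your change of generators, the observation that the relation for $(i,j)=(2,n)$ is tautological, and the resulting bookkeeping are exactly the unwinding of that count. The $n=2$ case is likewise handled as you do, since the single pair gives $r-1$ generators and the statement only requires a right-exact sequence from $\mathbb{Z}^{r}$.
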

			\begin{proof}
				It follows immediately from Corollary~\ref{cor:relationsZgnij} applied to the exact sequence~\eqref{eq:exactcoarse}.
			\end{proof}
			\begin{remark}
				Suppose $r=2$ and $n\geq3$. Then the exact sequence~\eqref{eq:exactmain} becomes
				\begin{equation}\label{eq:exactmainhyperelliptic}
					\begin{tikzcd}
						\left({\bigoplus_{i=2}^{n}[\mathcal{Z}_{g,n}^{1,i}]}\mathbb{Z}\right)\oplus[\mathcal{Z}_{g,n}^{2,n}]\mathbb{Z}\arrow[r] & \mathrm{Pic}(\mathcal{H}_{g,n})\arrow[r] & \mathrm{Pic}(\mathcal{H}_{g,n}^{\text{far}})\arrow[r] & 0.
					\end{tikzcd}
				\end{equation}
				Moreover, given a set of $n$ generators, it is fairly easy to write every class of divisors in terms of these generators, using the relations of Corollary~\ref{cor:relationsZgnij}. Indeed, in place of $\mathcal{Z}_{g,n}^{i,j}$ write simply the pair of indices $(i,j)$. Then every linear combination of these pairs is a relation generated by the ones in Corollary~\ref{cor:relationsZgnij} if and only if every index appears with total multiplicity 0. It is also possible to make sense of pairs with equal indices, defining $(j,j)=(j,u)+(j,v)-(u,v)$ for $j\not=u\not=v\not=j$ (it does not depend on the choice of $u$ and $v$), and the above characterization still holds. Using this, it is immediate to see that for every $j$ the set $[\mathcal{Z}_{g,n}^{j,1}],\ldots,[\mathcal{Z}_{g,n}^{j,n}]$ (repeating $j$ too) generates.
			\end{remark}
			\subsection{Description of $\mathcal{H}_{r,g,n}^{\mathrm{far}}$ and first presentation}\label{subsec:firstpresentation}
					Recall $\mathcal{H}_{r,g,n}^{\text{far}}$ is the open substack of $\mathcal{H}_{r,g,n}$ over which the sections $\sigma_i$ are mutually disjoint. The purpose of this subsection is to find a nice presentation of $\mathcal{H}_{r,g,n}^{\text{far}}$ as a quotient stack of a scheme by the action of an affine smooth algebraic group $G_{r,g,n}$. In the next subsection, for $1\leq n\leq rd+1$, we will manage to describe $\mathcal{H}_{r,g,n}^{\text{far}}$ as the quotient stack of an open subscheme of a representation of $G_{r,g,n}$.
					
					We follow and extend the ideas in~\cite{AV04} and~\cite{Per22}. We know that $\mathcal{H}_{r,g}$ is equivalent to the fibered category $\mathcal{H}'_{r,g}$ whose objects over a $k$-scheme $S$ are tuples
					\[
						(\pi:P\rightarrow S,L,i:L^{\otimes r}\hookrightarrow\mathcal{O}_P)
					\]
					where $P\rightarrow S$ is a Brauer-Severi scheme of relative dimension 1, $L$ is an invertible sheaf of $P$ which restricts to an invertible sheaf of degree $-d$ on any geometric fiber, and $i$ is an injective morphism which remains injective when restricted to any geometric fiber and such that the effective Cartier divisor $\Delta_i$ associated to $i$ is smooth over $S$. For details, see~\cite[Proposition 3.4]{AV04}. The smooth uniform cyclic cover $C\rightarrow P$ corresponding to such an object can be recovered as
					\[
						\begin{tikzcd}
							f:C=\underline{\Spec}_{\mathcal{O}_P}(\mathcal{O}_P\oplus L\oplus\cdots\oplus L^{\otimes r-1})\arrow[r] & P.
						\end{tikzcd}
					\]
					Here the structure of $\mathcal{O}_P\oplus L\oplus\cdots\oplus L^{\otimes r-1}$ as an $\mathcal{O}_P$-algebra is the one induced by $i$.
					Let $\widetilde{\sigma}:S\rightarrow C$ be a section of $\pi\circ f$ and consider the section $\sigma=f\circ\widetilde{\sigma}:S\rightarrow P$ to $\pi$. Then $\widetilde{\sigma}$ corresponds to a morphism of $\mathcal{O}_S$-algebras $\widetilde{\jmath}:\mathcal{O}_S\oplus\sigma^*L\oplus\cdots\oplus \sigma^*L^{\otimes r-1}\rightarrow\mathcal{O}_S$
					thanks to the functorial bijective map
					\[
						\begin{tikzcd}[column sep=16pt]
							\mathrm{Hom}_P(S,\underline{\Spec}_{\mathcal{O}_P}(\mathcal{O}_P\oplus\cdots\oplus L^{\otimes r-1}))\arrow[r] & \mathrm{Hom}_{\mathcal{O}_S}(\sigma^*(\mathcal{O}_P\oplus\cdots\oplus L^{\otimes r-1}),\mathcal{O}_S).
						\end{tikzcd}
					\]
					Now, $\widetilde{\jmath}$ corresponds to a morphism $j:L\rightarrow\mathcal{O}_S$ such that $j^r=\sigma^*(i)$ (since $\widetilde{\jmath}$ is a morphism of $\mathcal{O}_S$-algebras). As always, we refer to~\cite{Per22} for more details in the case $r=2$.
					
					Define $\mathcal{H}'_{r,g,n}$ to be the fibered category whose objects are
					\[
						(\pi:P\rightarrow S,L,i:L^{\otimes r}\hookrightarrow\mathcal{O}_P,\sigma_1,\ldots,\sigma_n,j_1,\ldots,j_n)
					\]
					with the obvious notation, where we ask the sections encoded by $(\sigma_1,j_1),\ldots,$$(\sigma_n,j_n)$ to be mutually disjoint. The arrows are defined in the natural way.
					The proof of the following proposition is a straightforward extension of~\cite[Proposition 1.3]{Per22}, which addresses the case $n=1$, $r=2$.
					\begin{proposition}\label{prop:equivalence}
						There is an equivalence of fibered categories between $\mathcal{H}_{r,g,n}$ and $\mathcal{H}_{r,g,n}'$.\qed
					\end{proposition}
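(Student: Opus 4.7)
The plan is to lift the AV04 equivalence (the unpointed case, $n=0$) to the $n$-pointed setting by showing that giving a section $\widetilde{\sigma}:S\to C$ of $\pi\circ f$ is the same as giving a pair $(\sigma,j)$, where $\sigma:S\to P$ is a section of $\pi$ and $j:\sigma^*L\to\mathcal{O}_S$ is a morphism satisfying $j^{r}=\sigma^*(i)$, and then checking that the disjointness conditions on the two sides match.

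First, I would define the functor $F:\mathcal{H}_{r,g,n}\to\mathcal{H}_{r,g,n}'$ as follows. Given an object $(f:C\to P\to S,\widetilde{\sigma}_1,\ldots,\widetilde{\sigma}_n)$ of $\mathcal{H}_{r,g,n}$, apply the equivalence of~\cite[Proposition 3.4]{AV04} (equivalently, Proposition 2.11 cited from that paper's argument in the text above) to obtain the triple $(\pi:P\to S,L,i:L^{\otimes r}\hookrightarrow\mathcal{O}_P)$. For each $\widetilde{\sigma}_k$ set $\sigma_k:=f\circ\widetilde{\sigma}_k$ and define $j_k:\sigma_k^*L\to\mathcal{O}_S$ as the morphism extracted, via the functorial bijection recalled in the text, from the $\mathcal{O}_S$-algebra homomorphism $\widetilde{\jmath}_k:\sigma_k^*(\mathcal{O}_P\oplus L\oplus\cdots\oplus L^{\otimes r-1})\to\mathcal{O}_S$ associated to $\widetilde{\sigma}_k$; the fact that $\widetilde{\jmath}_k$ is an $\mathcal{O}_S$-algebra map translates directly into the equation $j_k^{\,r}=\sigma_k^*(i)$. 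Morphisms in $\mathcal{H}_{r,g,n}$ are sent to morphisms in $\mathcal{H}_{r,g,n}'$ via the same construction, which is manifestly functorial in pullbacks because each of the ingredients is.

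Second, I would build a quasi-inverse $G:\mathcal{H}_{r,g,n}'\to\mathcal{H}_{r,g,n}$. Given $(\pi:P\to S,L,i,\sigma_1,\ldots,\sigma_n,j_1,\ldots,j_n)$, use AV04 to reconstruct $C:=\underline{\Spec}_{\mathcal{O}_P}(\mathcal{O}_P\oplus L\oplus\cdots\oplus L^{\otimes r-1})$ with its cover $f:C\to P$; for each $k$, the datum $j_k$ together with the relation $j_k^{\,r}=\sigma_k^*(i)$ produces an $\mathcal{O}_S$-algebra morphism $\widetilde{\jmath}_k$, which under the relative-Spec adjunction gives a section $\widetilde{\sigma}_k:S\to C$ with $f\circ\widetilde{\sigma}_k=\sigma_k$. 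That $F\circ G$ and $G\circ F$ are naturally isomorphic to the identity follows, at the level of $(P,L,i,f,C)$, from AV04, and at the level of sections from the fact that the adjunction bijection and the passage $\widetilde{\jmath}\leftrightarrow j$ are inverse to one another.

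Third, I need to match the disjointness conditions. By construction, $\widetilde{\sigma}_k(S)$ is the closed subscheme of $C$ cut out, locally on $P$, by the ideal determined by $\widetilde{\jmath}_k$; two sections $\widetilde{\sigma}_k,\widetilde{\sigma}_l$ are disjoint as sections of $\pi\circ f$ exactly when the pairs $(\sigma_k,j_k)$ and $(\sigma_l,j_l)$ cut out disjoint closed subschemes of $C$, which is the precise meaning of the ``mutually disjoint'' clause in the definition of $\mathcal{H}_{r,g,n}'$. Hence $F$ and $G$ respect the objects of both categories.

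The verification is essentially bookkeeping once the $n=1$, $r=2$ case of~\cite[Proposition 1.3]{Per22} is in hand; the only mild subtlety, and the step I expect to need the most care, is the disjointness translation: one must verify that on the $C$-side, $\widetilde{\sigma}_k(S)\cap\widetilde{\sigma}_l(S)=\emptyset$ is detected by the pair of data $(\sigma_k,j_k)$ and $(\sigma_l,j_l)$ in the right way, including on the locus where $\sigma_k=\sigma_l$ but $j_k\neq j_l$ (so that the two sections of $f$ above the same point of $P$ differ by a nontrivial power of a fixed generator of $\mu_r$). Once this is pinned down, the rest is routine categorical verification.
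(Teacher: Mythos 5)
Your proposal is correct and follows essentially the same route as the paper, which proves the proposition exactly by extending \cite[Proposition 3.4]{AV04} and \cite[Proposition 1.3]{Per22} via the correspondence, set up just before the statement, between sections $\widetilde{\sigma}$ of $\pi\circ f$ and pairs $(\sigma,j)$ with $j^{r}=\sigma^*(i)$, with the disjointness clauses matching by the very definition of $\mathcal{H}'_{r,g,n}$. The paper leaves the verification as a straightforward extension, and your write-up simply makes that bookkeeping explicit.
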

					
					Now, to find the desired presentation, we proceed similarly to what was done in~\cite{Per22}. The idea is to define a torsor $\widetilde{\mathcal{H}}_{r,g,n}^{\text{far}}\rightarrow\mathcal{H}_{r,g,n}^{\text{far}}$ from a scheme, obtained by considering isomorphisms between the Brauer-Severi scheme with $L$ and its first sections, and the projective line with $\mathcal{O}(-d)$ and the $\infty$, 0 and $[1:1]$ sections. Of course, in the case $n=2$ we consider just the sections $\infty$ and 0, and if $n=1$ then just the first one. In order for this to work, it is crucial to consider only $\mathcal{H}_{r,g,n}^{\text{far}}$ and not the whole $\mathcal{H}_{r,g,n}$, since in general two sections $\sigma_i$ and $\sigma_j$ may intersect each other, while for instance the $\infty$ and $0$ sections do not. We make all of this precise.
					\begin{definition}
						Let $\mathbb{A}(m)$ be the affine space of homogeneous polynomials of degree $m$ in two variables, which has dimension $m+1$, and let $\mathbb{A}_{\text{sm}}(m)$ be the open affine subscheme of $\mathbb{A}(m)$ given by the complement of the discriminant locus. Finally, let $\mathbb{A}_{r,n,\text{sm}}(m)$ to be the closed subscheme of $\mathbb{A}_{\text{sm}}(m)\times(\mathbb{A}^{n-3}\setminus\widetilde{\Delta})\times\mathbb{A}^n$ defined by the equations
						\[
						f(0,1)=s_1^r,\ f(1,0)=s_2^r,\ f(1,1)=s_3^r,\ f(1,p_1)=t_1^r,\ \ldots,\ f(1,p_{n-3})=t_{n-3}^r
						\]
						where $f\in\mathbb{A}_{\text{sm}}(m)$, $(p_1,\ldots,p_{n-3})\in\mathbb{A}^{n-3}\setminus\widetilde{\Delta}$, $s_1,s_2,s_3,t_1,\ldots,t_{n-3}\in\mathbb{A}^n$, and $\widetilde{\Delta}$ is the union of the extended diagonal and the hyperplanes given by asking at least one $p_i$ to be equal to 0 or 1. Of course, if $n=2$ we do not consider the condition $f(1,1)=s_3^r$, and there is no $t_i$, and similarly for $n=1$. Notice that $\mathbb{A}_{r,n,\text{sm}}(m)$ is an open subscheme of the scheme $\mathbb{A}_{r,n}(m)$ defined in the same way but without asking $f$ to be smooth.
					\end{definition}
					\textbf{First we address the case $n\geq3$.} As in~\cite{Per22}, we denote by $\sigma_\infty$ the section at infinity $S\rightarrow\mathbb{P}^1_S$, and similarly for $\sigma_0$ and $\sigma_{[1:1]}$. Define the fibered category $\widetilde{\mathcal{H}}_{r,g,n}^{\text{far}}$ whose objects are given by pairs of an object
					\[
					(\pi:P\rightarrow S,L,i:L^{\otimes r}\hookrightarrow\mathcal{O}_P,\sigma_1,\ldots,\sigma_n,j_1,\ldots,j_n)
					\]
					in $\mathcal{H}_{r,g,n}^{\text{far}}$, plus an isomorphism
					\[
						\begin{tikzcd}
							\phi:(P,L,\sigma_1,\sigma_2,\sigma_3)\arrow[r] & (\mathbb{P}_S^1,\mathcal{O}(-d),\sigma_\infty,\sigma_0,\sigma_{[1:1]}).
						\end{tikzcd}
					\]
					The arrows in $\widetilde{\mathcal{H}}_{r,g,n}^{\text{far}}$ are arrows in $\mathcal{H}_{r,g,n}^{\text{far}}$ which preserve $\phi$.
					
					Now, we want to describe $\widetilde{\mathcal{H}}_{r,g,n}^{\text{far}}$. It is not hard to show that it is equivalent to a functor, see~\cite[Remark 1.4]{Per22}. Moreover, we have an action of the group scheme
					\[
						G_{r,g,n}:=\underline{\mathrm{Aut}}_k(\mathbb{P}^1_k,\mathcal{O}(-d),\sigma_\infty,\sigma_0,\sigma_{[1:1]})
					\]
					on $\widetilde{\mathcal{H}}_{r,g,n}^{\text{far}}$, by composing an isomorphism $\phi$ with an element of $G_{r,g,n}$. The following Proposition is an analogue of~\cite[Proposition 1.5]{Per22}.
					\begin{proposition}\label{prop:isoHgnfartilde}
						Let $n\geq3$. The following holds:
						\begin{enumerate}
							\item There exists an isomorphism $G_{r,g,n}\simeq\mathbb{G}_m/\mu_{d}$.
							\item The functor $\widetilde{\mathcal{H}}_{r,g,n}^{\text{far}}$ is naturally isomorphic to $\mathbb{A}_{r,n,\text{sm}}(rd)$.
							\item The action of $G_{r,g,n}$ on $\widetilde{\mathcal{H}}_{r,g,n}^{\text{far}}$ translates into the action of \/ $\mathbb{G}_m/\mu_{d}$ on $\mathbb{A}_{r,n,\text{sm}}(rd)$ described as follows. Let $a$ be in $\mathbb{G}_m$ and
							\[
								(f,p_1,\ldots,p_{n-3},s_1,s_2,s_3,t_1,\ldots,t_{n-3})
							\]
							an element of $\mathbb{A}_{r,n,\text{sm}}(rd)$. Then
							\begin{align*}
								&a\cdot(f,p_1,\ldots,p_{n-3},s_1,s_2,s_3,t_1,\ldots,t_{n-3})\\
								&=(a^{-rd}f,p_1,\ldots,p_{n-3},a^{-d}s_1,a^{-d}s_2,a^{-d}s_3,a^{-d}t_1,\ldots,a^{-d}t_{n-3}).
							\end{align*}
							This action extends naturally to the whole $\mathbb{A}_{r,n}(rd)$.
						\end{enumerate}
					\end{proposition}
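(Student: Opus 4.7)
The plan is to prove each of the three parts in order, extending the arguments of \cite[Proposition 1.5]{Per22} (which handles the case $n=1$, $r=2$) to our setting.

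For part (1), the key observation is that any automorphism of $\mathbb{P}^1$ fixing three distinct points must be the identity; hence every element of $G_{r,g,n}$ covers $\mathrm{id}_{\mathbb{P}^1}$ and is determined by an automorphism of $\mathcal{O}(-d)$, i.e.\ a global unit, i.e.\ a scalar in $\mathbb{G}_m$. To present this abstract $\mathbb{G}_m$ as $\mathbb{G}_m/\mu_d$ in the way compatible with the $\mathrm{GL}_2/\mu_d$-action used in \cite{AV04}, I would observe that a scalar $\lambda\cdot I\in\mathbb{G}_m\subset\mathrm{GL}_2$ acts trivially on $\mathbb{P}^1$ and by $\lambda^d$ on $\mathcal{O}(-d)=\mathcal{O}(-1)^{\otimes d}$, so the composite $\mathbb{G}_m\hookrightarrow\mathrm{GL}_2\to\mathrm{Aut}(\mathbb{P}^1,\mathcal{O}(-d))$ surjects onto $G_{r,g,n}$ with kernel exactly $\mu_d$.

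For part (2), given a scheme $S$ and an object $(P,L,i,\sigma_1,\dots,\sigma_n,j_1,\dots,j_n,\phi)$ of $\widetilde{\mathcal{H}}_{r,g,n}^{\text{far}}(S)$, I would use $\phi$ to transport everything to the standard model, so that $P=\mathbb{P}^1_S$, $L=\mathcal{O}(-d)$, and $\sigma_1,\sigma_2,\sigma_3=\sigma_\infty,\sigma_0,\sigma_{[1:1]}$. The injection $i:\mathcal{O}(-rd)\hookrightarrow\mathcal{O}_{\mathbb{P}^1_S}$ becomes a global section $f$ of $\mathcal{O}(rd)$, i.e.\ a bihomogeneous polynomial of degree $rd$ in two variables; smoothness of the branch divisor is precisely the condition $f\in\mathbb{A}_{\mathrm{sm}}(rd)$. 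Each remaining section $\sigma_{3+k}$, being disjoint from $\sigma_\infty,\sigma_0,\sigma_{[1:1]}$ and from the others, has the form $[1:p_k]$ with $(p_1,\dots,p_{n-3})\in\mathbb{A}^{n-3}\setminus\widetilde{\Delta}$. Finally, the trivializations of $\sigma_k^*L$ induced by $\phi$ identify each $j_k$ with an element $s_k$ or $t_k$ of $\mathcal{O}(S)$, and the compatibility $j_k^r=\sigma_k^*i$ turns into exactly the equations cutting out $\mathbb{A}_{r,n,\mathrm{sm}}(rd)$. The inverse construction (take $P=\mathbb{P}^1_S$, $L=\mathcal{O}(-d)$, $\phi=\mathrm{id}$, and read off $i$, $\sigma_k$, $j_k$ from the data) is immediate and functorial.

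For part (3), I would compute how postcomposing $\phi$ with $a\in G_{r,g,n}$ affects the data under the isomorphism of part (2). By part (1), a representative $a\in\mathbb{G}_m$ scales $\mathcal{O}(-d)$ by $a^d$; hence the isomorphism $\phi^{\otimes r}:L^{\otimes r}\to\mathcal{O}(-rd)$ is rescaled by $a^{rd}$, and so the polynomial $f=i\circ(\phi^{\otimes r})^{-1}$ is rescaled by $a^{-rd}$. A parallel argument on each trivialization $\sigma_k^*L\simeq\mathcal{O}_S$ (which picks up a factor $a^d$) shows that each $s_i$ and $t_k$ is multiplied by $a^{-d}$, while the $p_k$'s remain unchanged since they only depend on the underlying---preserved---sections. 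Since the resulting formula is polynomial in $(a,f,p_k,s_i,t_k)$, it automatically extends to the whole $\mathbb{A}_{r,n}(rd)$.

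The main obstacle I expect is the weight bookkeeping in parts (1) and (3): carefully verifying that the natural parametrization $G_{r,g,n}\simeq\mathbb{G}_m/\mu_d$, coming from scalars in $\mathrm{GL}_2$, produces exactly weight $-d$ on the $s_i$'s and $t_k$'s and weight $-rd$ on $f$, via the identification $\mathcal{O}(-d)=\mathcal{O}(-1)^{\otimes d}$. Once this is set up, the rest is a routine generalization of the computations of \cite[Proposition 1.5]{Per22}.
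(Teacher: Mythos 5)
Your proposal is correct and follows essentially the same route as the paper: identify $G_{r,g,n}$ as the stabilizer of three distinct points inside $\underline{\mathrm{Aut}}_k(\mathbb{P}^1,\mathcal{O}(-d))\simeq\mathrm{GL}_2/\mu_d$ (hence the scalars modulo $\mu_d$), use $\phi$ to rigidify to the standard model so that $i$ becomes $f$, the extra sections become the $p_k$ (using disjointness from $\sigma_\infty$, which is exactly why one restricts to $\mathcal{H}_{r,g,n}^{\text{far}}$), and the $j$'s become $s_i,t_k$ via evaluation, with the equations $f(0,1)=s_1^r$, etc.; the weight computation in (3) then follows from the construction exactly as you describe.
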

					\begin{proof}
						The first statement is clear, since $\underline{\mathrm{Aut}}_k(\mathbb{P}^1_k,\mathcal{O}(-d))\simeq\mathrm{GL}_2/\mu_{d}$, where $\mu_{d}$ injects in $\mathrm{GL}_2$ diagonally, see~\cite[Theorem 4.1]{AV04}.
						
						Now, let $S$ be a $k$-scheme. Recall that a section of $\mathcal{O}_{\mathbb{P}^1_S}(d)$ can be thought as a homogeneous rational function of degree $d$ in two variables, with coefficients in $\mathcal{O}_S$. Clearly, if $V\subset S$ is an open subscheme and $U=\pi^{-1}V\subset\mathbb{P}_S^1$, it makes sense to evaluate a polynomial $f\in\Gamma(U,\mathcal{O}_{\mathbb{P}^1_S}(d))$ in a pair of sections of $\mathcal{O}_S(V)$. Let $\sigma:S\rightarrow\mathbb{P}^1_S$ be a section of $\pi:\mathbb{P}^1_S\rightarrow S$, with image contained in $\mathbb{A}^1_S$, where we have eliminated a fixed section at infinity. Then $\sigma$ corresponds to a global section $p$ of $\mathcal{O}_S$. Therefore, we have a map
						\begin{equation}\label{eq:isopol}
							\begin{tikzcd}
								\sigma^*\mathcal{O}_{\mathbb{P}^1_S}(d)\arrow[r] & \mathcal{O}_S
							\end{tikzcd}
						\end{equation}
						induced by $f\mapsto f(1,p)$, since locally the sections of $\sigma^*\mathcal{O}_{\mathbb{P}^1_S}(d)$ are restrictions of polynomials to $\sigma(S)$. For that reason, the morphism is surjective between invertible sheaves on $S$, hence an isomorphism. Let $i:\mathcal{O}_{\mathbb{P}_S^1}(-rd)\hookrightarrow\mathcal{O}_{\mathbb{P}^1_S}$ such that $(\mathbb{P}^1_S,\mathcal{O}_{\mathbb{P}^1_S}(-d),i)$ is an object of $\mathcal{H}'_{r,g}$. Then $i$ corresponds to a polynomial $f\in\mathrm{H}^0(\mathcal{O}_{\mathbb{P}^1_S}(rd))$. Moreover, let $j:\sigma^*\mathcal{O}_{\mathbb{P}^1_S}(-d)\rightarrow\mathcal{O}_S$ be such that $j^r=\sigma^*(i)$. Then we get a global section $t$ of $\mathcal{O}_S\simeq\sigma^*\mathcal{O}_{\mathbb{P}^1_S}(d)$ corresponding to $j$ under the isomorphism~\eqref{eq:isopol}. The relation $j^r=\sigma^*(i)$ becomes
						\begin{equation}\label{eq:condizionign}
							f(1,p)=t^r
						\end{equation}
						between global sections of $\mathcal{O}_S$.
						
						In our case, we apply the above reasoning for each section $\sigma_v$ and each $j_v$. Indeed, we can always assume $P=\mathbb{P}_S^1$ and that the first section is $\sigma_\infty$ using the isomorphism $\phi$, and we can use a similar argument as above for $\sigma_\infty$, as originally done in~\cite[Proposition 1.5]{Per22}. In this case the isomorphism
						\[
							\begin{tikzcd}
								\sigma_{\infty}^*\mathcal{O}_{\mathbb{P}^1_S}(d)\arrow[r] & \mathcal{O}_S
							\end{tikzcd}
						\]
						is induced by $f\mapsto f(0,1)$. Then for any other section we can apply the reasoning above, since they are disjoint from the now fixed section at infinity $\sigma_\infty$.
						Since to give a global section of $\mathcal{O}_S$ is the same as giving a morphism $S\rightarrow\mathbb{A}^1$, we get a map
						\[
						\begin{tikzcd}[row sep=small, column sep=small]
							\widetilde{\mathcal{H}}^{\mathrm{far}}_{r,g,n}\arrow[rr] & & \mathbb{A}_{sm}(rd)\times(\mathbb{A}^{n-3}\setminus\widetilde{\Delta})\times\mathbb{A}^n.
						\end{tikzcd}
						\]
						This map sends an object
						\[
							(\pi:P\rightarrow S,L,i:L^{\otimes r}\hookrightarrow\mathcal{O}_P,\sigma_1,\ldots,\sigma_n,j_1,\ldots,j_n,\phi)
						\]
						in $(f,p_1,\ldots,p_{n-3},s_1,s_2,s_3,t_1,\ldots,t_{n-3})$, where we have read everything in $\mathbb{P}_S^1$ using the isomorphism $\phi$ (also the last $n-3$ sections). Here $f$, $(p_1,\ldots,p_{n-3})$, and $(s_1,\ldots,t_{n-3})$ correspond respectively to $(\mathcal{O}(-d),i)$, $(\sigma_4,\ldots,\sigma_{n})$, and $(j_1,\ldots,j_n)$.
						
						By construction, the image is $\mathbb{A}_{r,n,\mathrm{sm}}(rd)$.
						Clearly, we have a quasi-inverse to that functor, given by performing the inverse construction of the one before, and choosing $\phi$ to be the identity of
						\[
						(\mathbb{P}^1_S,\mathcal{O}_{\mathbb{P}^1_S}(-d),\sigma_{\infty},\sigma_0,\sigma_{[1,1]}).
						\]
						In this way, we get
						\[
						\widetilde{\mathcal{H}}^{\mathrm{far}}_{r,g,n}\simeq\mathbb{A}_{r,n,\mathrm{sm}}(rd).
						\]
						This proves the second statement.
						The last one follows from the construction of the isomorphism.
					\end{proof}
					\begin{remark}
						The action of an element $a$ on the polynomial $f$ can be written also as
						\[
							a\cdot f(x,y)=a^{-rd}f(x,y)=f(a^{-1}x,a^{-1}y)=f(A^{-1}\cdot(x,y))
						\]
						where $A$ is the diagonal $2\times2$ matrix with coefficients $a$ on the diagonal. See the articles~\cite{AV04},~\cite{Per22}, and what follows to understand the connection with the cases $n=0,1$.
					\end{remark}
					\textbf{Now we address the case of $n=1$ and $n=2$.}
					We proceed in the exact same way. For this reason we give no details and proofs. Define the fibered category $\widetilde{\mathcal{H}}_{r,g,1}^{\text{far}}$ whose objects are given by pairs of an object $(\pi:P\rightarrow S,L,i:L^{\otimes r}\hookrightarrow\mathcal{O}_P,\sigma_1,j_1)$ in $\mathcal{H}_{r,g,1}$, plus an isomorphism
					\[
					\begin{tikzcd}
						\phi:(P,L,\sigma_1)\arrow[r] & (\mathbb{P}_S^1,\mathcal{O}(-d),\sigma_\infty).
					\end{tikzcd}
					\]
					The arrows in $\widetilde{\mathcal{H}}_{r,g,1}^{\text{far}}$ are arrows in $\mathcal{H}_{r,g,1}^{\text{far}}$ that preserve $\phi$. Similarly, we define the fibered category $\widetilde{\mathcal{H}}_{r,g,2}^{\text{far}}$ whose objects are given by pairs of an object
					\[
						(\pi:P\rightarrow S,L,i:L^{\otimes r}\hookrightarrow\mathcal{O}_P,\sigma_1,\sigma_2,j_1,j_2)
					\]
					in $\mathcal{H}_{r,g,2}$, plus an isomorphism
					\[
					\begin{tikzcd}
						\phi:(P,L,\sigma_1,\sigma_2)\arrow[r] & (\mathbb{P}_S^1,\mathcal{O}(-d),\sigma_\infty,\sigma_0).
					\end{tikzcd}
					\]
					The arrows in $\widetilde{\mathcal{H}}_{r,g,2}^{\text{far}}$ are arrows in $\mathcal{H}_{r,g,2}^{\text{far}}$ which preserve $\phi$.
					Then, both $\widetilde{\mathcal{H}}_{r,g,1}^{\text{far}}$ and $\widetilde{\mathcal{H}}_{r,g,2}^{\text{far}}$ are equivalent to functors, see~\cite[Remark 1.4]{Per22}.
					
					Moreover, we have actions of the group schemes
					\[
					G_{r,g,1}:=\underline{\mathrm{Aut}}_k(\mathbb{P}^1_k,\mathcal{O}(-d),\sigma_\infty)
					\]
					and
					\[
					G_{r,g,2}:=\underline{\mathrm{Aut}}_k(\mathbb{P}^1_k,\mathcal{O}(-d),\sigma_\infty,\sigma_0)
					\]
					on $\widetilde{\mathcal{H}}_{r,g,1}^{\text{far}}$ and $\widetilde{\mathcal{H}}_{r,g,2}^{\text{far}}$ respectively, given by composing an isomorphism $\phi$ with an element of those groups. Let $\mathrm{B}_2$ be the subgroup of $2\times2$ lower triangular matrices in $\mathrm{GL}_2$, and consider $\mathbb{G}_m\times\mathbb{G}_m\subset\mathrm{B}_2$ as the subgroup of the diagonal matrices.
					\begin{proposition}\label{prop:isoHgnfartilde12}
						The following holds:
						\begin{enumerate}
							\item There exist isomorphisms
							\begin{align*}
								&G_{r,g,1}\simeq \mathrm{B}_2/\mu_{d}\\
								&G_{r,g,2}\simeq(\mathbb{G}_m\times\mathbb{G}_m)/\mu_{d}
							\end{align*}
							where $\mu_{d}$ injects in $\mathbb{G}_m\times\mathbb{G}_m\subset B_2$ diagonally.
							\item The functors $\widetilde{\mathcal{H}}_{r,g,1}^{\text{far}}$ and $\widetilde{\mathcal{H}}_{r,g,2}^{\text{far}}$ are naturally isomorphic to $\mathbb{A}_{r,1,\text{sm}}(rd)$ and $\mathbb{A}_{r,2,\text{sm}}(rd)$ respectively.
							\item The action of \/ $G_{r,g,1}$ on \/ $\widetilde{\mathcal{H}}_{r,g,1}^{\text{far}}$ translates into the action of \/ $\mathrm{B}_2/\mu_{d}$ on $\mathbb{A}_{r,1,\text{sm}}(rd)$ described as follows. Let $A=\begin{bmatrix}
								a & 0\\
								b & c\\
							\end{bmatrix}$ be an element of \/ $\mathrm{B}_2$, and $(f,s_1)$ an element of \/ $\mathbb{A}_{r,1,\text{sm}}(rd)$. Then
							\[
							A\cdot(f,s_1)=(f(A^{-1}(x,y)),c^{-d}s_1).
							\]
							This action extends naturally to the whole $\mathbb{A}_{r,1}(rd)$.
							\item The action of \/ $G_{r,g,2}$ on $\widetilde{\mathcal{H}}_{r,g,2}^{\text{far}}$ translates into the action of \/ $(\mathbb{G}_m\times\mathbb{G}_m)/\mu_{d}$ on $\mathbb{A}_{r,2,\text{sm}}(rd)$ described as follows. Let $(a,c)$ be an element of \/ $\mathbb{G}_m\times\mathbb{G}_m$, and $(f,s_1,s_2)$ an element of \/ $\mathbb{A}_{r,2,\text{sm}}(rd)$. Then
							\[
							(a,c)\cdot(f(x,y),s_1,s_2)=(f(a^{-1}x,c^{-1}y),c^{-d}s_1,a^{-d}s_2).
							\]
							This action extends naturally to the whole $\mathbb{A}_{r,2}(rd)$.
							\qed
						\end{enumerate}
					\end{proposition}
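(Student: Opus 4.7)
The plan is to mirror the proof of Proposition~\ref{prop:isoHgnfartilde} step by step, replacing the rigidification by three sections with the weaker rigidification by one or two sections. Since $\underline{\mathrm{Aut}}_k(\mathbb{P}^1_k,\mathcal{O}(-d))\simeq\mathrm{GL}_2/\mu_d$ by~\cite[Theorem 4.1]{AV04}, for part (1) I would compute the stabilizers of the chosen sections under the induced action of $\mathrm{GL}_2$ on $\mathbb{P}^1$. Using the convention that $\sigma_\infty=[0:1]$ and $\sigma_0=[1:0]$ (which is what makes the evaluation at $(0,1)$, respectively $(1,0)$, in the definition of $\mathbb{A}_{r,n,\mathrm{sm}}(rd)$ correspond to the section at infinity, respectively at $0$), the stabilizer of $[0:1]$ inside $\mathrm{GL}_2$ is precisely $\mathrm{B}_2$, and the stabilizer of both $[0:1]$ and $[1:0]$ is the diagonal torus $\mathbb{G}_m\times\mathbb{G}_m$. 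Quotienting by $\mu_d$ (which is a subgroup of both) yields the claimed identifications.

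For part (2) I would construct the equivalence exactly as in the proof of Proposition~\ref{prop:isoHgnfartilde}, but stopping at the appropriate number of sections. After using $\phi$ to identify $P$ with $\mathbb{P}^1_S$ and $L$ with $\mathcal{O}(-d)$, the sheaf injection $i$ becomes a polynomial $f\in\mathrm{H}^0(\mathcal{O}_{\mathbb{P}^1_S}(rd))$, smooth because the object lies in $\mathcal{H}_{r,g,n}^{\text{far}}$. For $n=1$, there are no further sections to record (the section $\sigma_\infty$ is already fixed by $\phi$), and $j_1$ becomes, via the isomorphism~(\ref{eq:isopol}) specialized to evaluation at $(0,1)$, a global section $s_1$ of $\mathcal{O}_S$ satisfying $f(0,1)=s_1^r$, which is exactly the defining equation of $\mathbb{A}_{r,1,\mathrm{sm}}(rd)$. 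For $n=2$, one further records the constraint $f(1,0)=s_2^r$ coming from $j_2$ via evaluation at the section $\sigma_0=[1:0]$. The quasi-inverse is constructed as before, by taking $\phi$ to be the identity.

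For parts (3) and (4) I would trace through how an element $A$ of $\mathrm{B}_2$, respectively of $\mathbb{G}_m\times\mathbb{G}_m$, acts after the translation. The action on $f$ is $f\mapsto f\circ A^{-1}$ by the standard description of the $\mathrm{GL}_2$-action on $\mathbb{A}(rd)$; the key point is how it acts on the $s_v$. With $A=\bigl(\begin{smallmatrix}a&0\\b&c\end{smallmatrix}\bigr)$, the inverse sends $(0,1)$ to $(0,1/c)$, so homogeneity of degree $rd$ gives $(A\cdot f)(0,1)=c^{-rd}f(0,1)=c^{-rd}s_1^r$, forcing $A\cdot s_1=c^{-d}s_1$ (the sign/root ambiguity is fixed by compatibility of the isomorphism~(\ref{eq:isopol}) with the isomorphism of invertible sheaves induced by $A$). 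The analogous evaluation at $(1,0)$ yields the factor $a^{-d}$ for $s_2$ in the diagonal case.

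I do not expect any serious obstacle: everything is formal extension of Proposition~\ref{prop:isoHgnfartilde} with the three-section normalization replaced by one or two, and with the identifications of stabilizer subgroups of $\mathrm{GL}_2/\mu_d$ being standard. The only point requiring some care is keeping the conventions for $\sigma_\infty$ versus $\sigma_0$ consistent with the evaluation conventions in the definition of $\mathbb{A}_{r,n,\mathrm{sm}}(rd)$, so that the rescaling exponents in the action match those stated in (3) and (4); this is purely a bookkeeping matter.
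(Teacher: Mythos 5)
Your proposal is correct and is essentially the paper's own argument: the paper explicitly omits the proof of this proposition, stating that one proceeds exactly as in Proposition~\ref{prop:isoHgnfartilde}, which is precisely what you do (rigidifying by one or two sections instead of three, identifying the stabilizers $\mathrm{B}_2$ and the diagonal torus inside $\mathrm{GL}_2$, and reading off the weights on $s_1,s_2$ from the evaluation conventions). Your care about matching $\sigma_\infty=[0:1]$, $\sigma_0=[1:0]$ with the evaluations $f(0,1)$, $f(1,0)$ is exactly the bookkeeping needed.
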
				
					Now, let $n\geq1$. Notice that we have a natural map $\widetilde{\mathcal{H}}_{r,g,n}^{\text{far}}\rightarrow\mathcal{H}_{r,g,n}^{\text{far}}$ given by forgetting the isomorphism $\phi$. The following Proposition is an analogue of~\cite[Proposition 1.6]{Per22}, and can be proved in the same way. For this reason, we omit the proof.
					\begin{proposition}\label{prop:torsor}
						The map
						\[
						\begin{tikzcd}
							\widetilde{\mathcal{H}}_{r,g,n}^{\text{far}}\arrow[r] & \mathcal{H}_{r,g,n}^{\text{far}}
						\end{tikzcd}
						\]
						is a $G_{r,g,n}$-torsor. In particular, $\mathcal{H}_{r,g,n}^{\text{far}}\simeq\left[\widetilde{\mathcal{H}}_{r,g,n}^{\text{far}}/G_{r,g,n}\right]$.\qed
					\end{proposition}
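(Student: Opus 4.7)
The plan is to verify the two defining axioms of a torsor for the forgetful map $\widetilde{\mathcal{H}}_{r,g,n}^{\text{far}}\to\mathcal{H}_{r,g,n}^{\text{far}}$: that the $G_{r,g,n}$-action is simply transitive on fibers, and that the map admits sections Zariski-locally on $\mathcal{H}_{r,g,n}^{\text{far}}$. Once both hold, the presentation $\mathcal{H}_{r,g,n}^{\text{far}}\simeq[\widetilde{\mathcal{H}}_{r,g,n}^{\text{far}}/G_{r,g,n}]$ is then formal. I will describe the case $n\geq 3$; the cases $n=1,2$ are identical up to replacing the rigidifying triple of sections by a pair or a single one and using the correspondingly larger group from Proposition~\ref{prop:isoHgnfartilde12}.

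For simple transitivity, the action rule $g\cdot(X,\phi)=(X,g\circ\phi)$ visibly preserves the underlying object $X=(P,L,i,\vec\sigma,\vec j)$. Given such an $X$ over $S$, the fiber of the forgetful map is by construction the set of isomorphisms $\phi\colon(P,L,\sigma_1,\sigma_2,\sigma_3)\xrightarrow{\sim}(\mathbb{P}^1_S,\mathcal{O}(-d),\sigma_\infty,\sigma_0,\sigma_{[1:1]})$, and any two such $\phi,\phi'$ differ by the unique element $g=\phi'\circ\phi^{-1}\in G_{r,g,n}(S)$. This yields freeness and transitivity in one stroke.

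For Zariski-local existence of $\phi$, I would proceed in three reductions starting from an $X$ over an affine base $S$. First, $P\to S$ is a Brauer-Severi scheme of relative dimension $1$ admitting the section $\sigma_1$, so it is Zariski-locally isomorphic to $\mathbb{P}^1_S$. Second, after such a trivialization the sections $\sigma_1,\sigma_2,\sigma_3$ are pairwise disjoint (this is the content of the ``far'' condition), and $\mathrm{PGL}_2$ acts simply transitively on ordered triples of pairwise disjoint sections of $\mathbb{P}^1_S\to S$; this yields a unique global automorphism sending $(\sigma_1,\sigma_2,\sigma_3)$ to $(\sigma_\infty,\sigma_0,\sigma_{[1:1]})$. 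Third, the remaining datum $L$ is a line bundle on $\mathbb{P}^1_S$ of relative degree $-d$; tensoring with $\mathcal{O}(d)$ gives a relative degree $0$ line bundle whose pushforward $M:=\pi_*(L\otimes\mathcal{O}(d))$ is a line bundle on $S$ whose formation commutes with base change (by the same cohomology and base change argument already invoked in Lemma~\ref{lem:pi*deg0}), and the adjunction map $\pi^*M\to L\otimes\mathcal{O}(d)$ is an isomorphism. Shrinking $S$ once more to trivialize $M$ identifies $L$ with $\mathcal{O}(-d)$ and completes the construction of $\phi$.

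Combining the two steps exhibits $\widetilde{\mathcal{H}}_{r,g,n}^{\text{far}}\to\mathcal{H}_{r,g,n}^{\text{far}}$ as a Zariski-local, and therefore fppf, $G_{r,g,n}$-torsor; the quotient presentation is then immediate. The step I would watch most carefully is the last one: after trivializing $M$, one must check that the identification $L\simeq\mathcal{O}(-d)$ is compatible with the algebra structure encoded by $i\colon L^{\otimes r}\hookrightarrow\mathcal{O}_P$ and with the trivializations $j_v$ of $\sigma_v^*L$. Since these all transform covariantly under any chosen trivialization of $L$, this is bookkeeping rather than a conceptual obstacle, which is presumably why the author simply refers to the parallel argument in~\cite[Proposition 1.6]{Per22}.
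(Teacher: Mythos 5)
Your argument is correct and is essentially the standard one the paper has in mind: the paper omits the proof, citing \cite[Proposition 1.6]{Per22}, and that argument proceeds exactly as you do, by showing the Isom-sheaf is a pseudo-torsor (two rigidifications differ by a unique automorphism of $(\mathbb{P}^1_S,\mathcal{O}(-d),\sigma_\infty,\ldots)$) and is Zariski-locally nonempty (trivialize the Brauer--Severi scheme via a section, normalize the disjoint sections by $\mathrm{PGL}_2$, and trivialize the degree-$0$ twist $\pi_*(L\otimes\mathcal{O}(d))$ locally). Your final caveat is even milder than you suggest: the isomorphism $\phi$ in the definition of $\widetilde{\mathcal{H}}_{r,g,n}^{\text{far}}$ is only required to respect $(P,L,\sigma_1,\sigma_2,\sigma_3)$, not $i$ or the $j_v$, so no compatibility there needs checking.
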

					From Propositions~\ref{prop:isoHgnfartilde},~\ref{prop:isoHgnfartilde12} and~\ref{prop:torsor} we get the following Corollaries.
					\begin{corollary}\label{cor:quotientclosedHgnfar1}
						There exists an isomorphism
						\[
						\mathcal{H}_{r,g,1}\simeq\left[\mathbb{A}_{r,1,\text{sm}}(rd)/(\mathrm{B}_2/\mu_{d})\right]
						\]
						where the action of \/ $\mathrm{B}_2/\mu_{d}$ on $\mathbb{A}_{r,1,\text{sm}}(rd)$ is given by
						\[
						A\cdot(f,s_1)=(f(A^{-1}(x,y)),c^{-d}s_1)
						\]
						for all \/ $\mathrm{B}_2\ni A=\begin{bmatrix}
							a & 0\\
							b & c\\
						\end{bmatrix}$ and $(f,s_1)\in\mathbb{A}_{r,1,\text{sm}}(rd)$.\qed
					\end{corollary}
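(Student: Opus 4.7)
The plan is to simply glue together Propositions~\ref{prop:isoHgnfartilde12} and~\ref{prop:torsor} specialized to $n=1$, so the argument should be essentially immediate and no real obstacle is expected.

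First, I would invoke Proposition~\ref{prop:torsor} in the case $n=1$, which asserts that the forgetful map $\widetilde{\mathcal{H}}_{r,g,1}^{\text{far}}\to\mathcal{H}_{r,g,1}^{\text{far}}$ is a $G_{r,g,1}$-torsor, and hence produces a canonical equivalence
\[
\mathcal{H}_{r,g,1}^{\text{far}}\simeq\bigl[\widetilde{\mathcal{H}}_{r,g,1}^{\text{far}}/G_{r,g,1}\bigr].
\]
Then I would substitute the identifications supplied by Proposition~\ref{prop:isoHgnfartilde12}: part (1) gives the isomorphism of group schemes $G_{r,g,1}\simeq \mathrm{B}_2/\mu_d$ (with $\mu_d$ embedded diagonally in the diagonal torus), part (2) gives the isomorphism of functors $\widetilde{\mathcal{H}}_{r,g,1}^{\text{far}}\simeq\mathbb{A}_{r,1,\text{sm}}(rd)$, and part (3) describes the transported action of $\mathrm{B}_2/\mu_d$ on $\mathbb{A}_{r,1,\text{sm}}(rd)$ by the prescribed formula $A\cdot(f,s_1)=(f(A^{-1}(x,y)),c^{-d}s_1)$.

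The only point to check is that the equivalences of Proposition~\ref{prop:isoHgnfartilde12} are compatible in the sense that the $G_{r,g,1}$-action on $\widetilde{\mathcal{H}}_{r,g,1}^{\text{far}}$ used in Proposition~\ref{prop:torsor} is carried by the isomorphism in part (2) to the action of $\mathrm{B}_2/\mu_d$ on $\mathbb{A}_{r,1,\text{sm}}(rd)$ described in part (3); but this is precisely the content of part (3), whose proof (as noted in the excerpt, modelled on \cite[Proposition~1.5]{Per22}) amounts to unwinding how composing $\phi$ with an automorphism of $(\mathbb{P}^1_k,\mathcal{O}(-d),\sigma_\infty)$ transforms the data $(f,s_1)$ recovered under the isomorphism. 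Combining the displayed equivalence with these substitutions then yields the stated quotient presentation, completing the proof.
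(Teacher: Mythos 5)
Your proposal is correct and matches the paper's own derivation: the corollary is obtained exactly by combining Proposition~\ref{prop:torsor} (the torsor presentation $\mathcal{H}_{r,g,1}^{\text{far}}\simeq[\widetilde{\mathcal{H}}_{r,g,1}^{\text{far}}/G_{r,g,1}]$) with the identifications and equivariance statements of Proposition~\ref{prop:isoHgnfartilde12} specialized to $n=1$. Your remark that the compatibility of the actions is precisely the content of part (3) is the right observation, and nothing further is needed.
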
					
					\begin{corollary}\label{cor:quotientclosedHgnfar2}
						There exists an isomorphism
						\[
						\mathcal{H}_{r,g,2}^{\text{far}}\simeq\left[\mathbb{A}_{r,2,\text{sm}}(rd)/((\mathbb{G}_m\times\mathbb{G}_m)/\mu_{d})\right]
						\]
						where the action of \/ $(\mathbb{G}_m\times\mathbb{G}_m)/\mu_{d}$ on $\mathbb{A}_{r,2,\text{sm}}(rd)$ is given by
						\[
						(a,c)\cdot(f(x,y),s_1,s_2)=(f(a^{-1}x,c^{-1}y),c^{-d}s_1,a^{-d}s_2)
						\]
						for all \/ $\mathbb{G}_m\times\mathbb{G}_m\ni(a,c)$ and $(f,s_1,s_2)\in\mathbb{A}_{r,2,\text{sm}}(rd)$.\qed
					\end{corollary}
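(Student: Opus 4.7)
The plan is to deduce this corollary by simply combining the three pieces already established in Propositions~\ref{prop:isoHgnfartilde12} and~\ref{prop:torsor} applied with $n=2$. There is essentially no new content: everything has been set up carefully in the preceding constructions.

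First, I would invoke Proposition~\ref{prop:torsor} with $n=2$. This tells us that the forgetful map $\widetilde{\mathcal{H}}_{r,g,2}^{\text{far}}\to\mathcal{H}_{r,g,2}^{\text{far}}$ is a $G_{r,g,2}$-torsor, and therefore gives the quotient presentation
\[
\mathcal{H}_{r,g,2}^{\text{far}}\simeq\bigl[\widetilde{\mathcal{H}}_{r,g,2}^{\text{far}}/G_{r,g,2}\bigr].
\]
This reduces the problem to identifying the right-hand side in explicit terms.

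Next, I would plug in the three explicit identifications supplied by Proposition~\ref{prop:isoHgnfartilde12}: part~(1) gives the group isomorphism $G_{r,g,2}\simeq(\mathbb{G}_m\times\mathbb{G}_m)/\mu_{d}$ (with $\mu_d$ embedded diagonally), part~(2) gives the isomorphism of functors $\widetilde{\mathcal{H}}_{r,g,2}^{\text{far}}\simeq\mathbb{A}_{r,2,\text{sm}}(rd)$, and part~(4) describes the induced action as
\[
(a,c)\cdot(f(x,y),s_1,s_2)=(f(a^{-1}x,c^{-1}y),c^{-d}s_1,a^{-d}s_2).
\]
Substituting these three identifications into the quotient presentation above yields exactly the claimed formula.

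The only thing one should verify is that the isomorphisms in Proposition~\ref{prop:isoHgnfartilde12} are compatible in the sense that the isomorphism $\widetilde{\mathcal{H}}_{r,g,2}^{\text{far}}\simeq\mathbb{A}_{r,2,\text{sm}}(rd)$ is $G_{r,g,2}$-equivariant with respect to the action of $(\mathbb{G}_m\times\mathbb{G}_m)/\mu_d$ described in part~(4), but this is precisely the content of part~(4) itself. Hence no further argument is required, which is why the statement is tagged with \qed in the text: it is an immediate consequence of assembling the previously stated results. The main (already handled) obstacle lies in Proposition~\ref{prop:isoHgnfartilde12} itself, namely in translating the moduli data into polynomial data, but at the level of this corollary the work is already done.
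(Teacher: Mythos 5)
Your proposal is correct and is exactly the paper's argument: the corollary is stated as an immediate consequence of Proposition~\ref{prop:torsor} (the torsor presentation $\mathcal{H}_{r,g,2}^{\text{far}}\simeq[\widetilde{\mathcal{H}}_{r,g,2}^{\text{far}}/G_{r,g,2}]$) combined with Proposition~\ref{prop:isoHgnfartilde12}, which identifies $G_{r,g,2}$ with $(\mathbb{G}_m\times\mathbb{G}_m)/\mu_{d}$, the functor $\widetilde{\mathcal{H}}_{r,g,2}^{\text{far}}$ with $\mathbb{A}_{r,2,\text{sm}}(rd)$, and the action with the stated formula. No further verification is needed beyond what those propositions already contain, which is why the paper marks the corollary with \qed.
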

					\begin{corollary}\label{cor:quotientclosedHgnfar}
						Let $n\geq3$. There exists an isomorphism
						\[
						\mathcal{H}_{r,g,n}^{\text{far}}\simeq\left[\mathbb{A}_{r,n,\text{sm}}(rd)/(\mathbb{G}_m/\mu_{d})\right]
						\]
						where the action of \/ $\mathbb{G}_m/\mu_{d}$ on $\mathbb{A}_{r,n,\text{sm}}(rd)$ is given by
						\[
						a\cdot(f,p_1,\ldots,p_{n-3},s_1,\ldots,t_{n-3})=(a^{-rd}f,p_1,\ldots,p_{n-3},a^{-d}s_1,\ldots,a^{-d}t_{n-3})
						\]
						for all \/ $\mathbb{G}_m\ni a$ and $(f,p_1,\ldots,p_{n-3},s_1,\ldots,t_{n-3})\in\mathbb{A}_{r,n,\text{sm}}(rd)$. This action extends naturally to the whole $\mathbb{A}_{r,n}(rd)$.\qed
					\end{corollary}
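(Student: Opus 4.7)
The plan is to treat this as a direct assembly of the three preceding results (Propositions~\ref{prop:isoHgnfartilde},~\ref{prop:isoHgnfartilde12}, and~\ref{prop:torsor}), since for $n\geq 3$ all the substantive content has already been established. First I would invoke Proposition~\ref{prop:torsor}, which gives the isomorphism of stacks
\[
\mathcal{H}_{r,g,n}^{\text{far}}\simeq\left[\widetilde{\mathcal{H}}_{r,g,n}^{\text{far}}/G_{r,g,n}\right].
\]
This reduces the statement to identifying $\widetilde{\mathcal{H}}_{r,g,n}^{\text{far}}$ with $\mathbb{A}_{r,n,\text{sm}}(rd)$ as a functor, identifying $G_{r,g,n}$ with $\mathbb{G}_m/\mu_d$ as a group scheme, and checking that the action transported along these two identifications matches the one in the statement.

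Next I would apply Proposition~\ref{prop:isoHgnfartilde}(2) to replace $\widetilde{\mathcal{H}}_{r,g,n}^{\text{far}}$ by $\mathbb{A}_{r,n,\text{sm}}(rd)$, and Proposition~\ref{prop:isoHgnfartilde}(1) to replace $G_{r,g,n}$ by $\mathbb{G}_m/\mu_d$. Since both of these are genuine isomorphisms (not merely comparisons that only make sense after quotienting), they are compatible with forming the associated quotient stack. Finally, Proposition~\ref{prop:isoHgnfartilde}(3) gives exactly the formula
\[
a\cdot(f,p_1,\ldots,p_{n-3},s_1,\ldots,t_{n-3})=(a^{-rd}f,p_1,\ldots,p_{n-3},a^{-d}s_1,\ldots,a^{-d}t_{n-3})
\]
for the transported action, which is the action appearing in the corollary.

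There is essentially no obstacle here: the entire work has been done in Proposition~\ref{prop:isoHgnfartilde} and Proposition~\ref{prop:torsor}. The only thing to remark is the extendability claim at the end, namely that this $\mathbb{G}_m/\mu_d$-action on $\mathbb{A}_{r,n,\text{sm}}(rd)$ extends to the ambient $\mathbb{A}_{r,n}(rd)$; this is immediate from the formula, since multiplication by $a^{-rd}$ and $a^{-d}$ are linear operations defined without reference to smoothness of $f$ or to the open condition $\widetilde{\Delta}$. Hence the proof is essentially a one-line citation, with the "qed" already indicated in the statement.
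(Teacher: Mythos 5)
Your proposal is correct and matches the paper exactly: the corollary is obtained by combining Proposition~\ref{prop:torsor} (the torsor presentation $\mathcal{H}_{r,g,n}^{\text{far}}\simeq[\widetilde{\mathcal{H}}_{r,g,n}^{\text{far}}/G_{r,g,n}]$) with the identifications and the transported action from Proposition~\ref{prop:isoHgnfartilde}, which already includes the extension of the action to all of $\mathbb{A}_{r,n}(rd)$. Nothing further is needed.
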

					For the case of $n=0$ see~\cite[Theorem 4.1]{AV04}, while for the case $(r,n)=(2,1)$ this has already been done in~\cite[Propositions 1.5, 1.6]{Per22}.
					\subsection{Second presentation of $\mathcal{H}_{r,g,n}^{\mathrm{far}}$, for $n\leq rd+1$}\label{subsec:secondpresentation}
					In order to compute the Picard group of $\mathcal{H}_{r,g,n}^{\text{far}}$ for $n\leq rd+1$, we want to find a presentation of $\widetilde{\mathcal{H}}_{r,g,n}^{\text{far}}$ as a quotient stack of an open subscheme of a representation of $G_{r,g,n}$, by the action of the same group. First we consider the case $1\leq n\leq3$, and only after that we address the case $3\leq n\leq rd+1$. This is motivated by both expository and notational reasons.
					
					\textbf{Case $1\leq n\leq3$.}
					Let $f\in\mathbb{A}(rd)$, and let $a_0,\ldots,a_{rd}$ be its coefficients, so that
					\[
						f(x,y)=\sum_{i=0}^{rd}a_ix^{rd-i}y^i.
					\]
					Then, the equation $f(0,1)=s_1^r$ is equivalent to $a_{rd}=s_1^r$. Similarly, $f(1,0)=s_2^r$ is equivalent to $a_0=s_2^r$, and $f(1,1)=s_3^r$ is the same as $a_{rd-1}=s_3^r-s_2^r-\sum_{i=1}^{rd-2}a_i-s_1^r$, where we have used the previous relations. This induces an isomorphism
					\[
						\begin{tikzcd}
							h_n:\mathbb{A}_{r,n}(rd)\arrow[r,"\simeq"] & \mathbb{A}^{rd+1}
						\end{tikzcd}
					\]
					for $1\leq n\leq3$. For $n=3$, this sends $(f,s_1,s_2,s_3)$ to $(a_1,\ldots,a_{rd-2},s_1,s_2,s_3)$, and the inverse $g_3$ is induced by the above relations. Define $\mathbb{A}_{r,n,\text{sm}}^{rd+1}$ to be the open subscheme of $\mathbb{A}_{r,n}^{rd+1}:=\mathbb{A}^{rd+1}$ which is the image of $\mathbb{A}_{r,n,\text{sm}}(rd)$ under $h_n$.
					We have an action of $G_{r,g,n}$ on $\mathbb{A}_{r,n}^{rd+1}$ which makes the map $h_n$ equivariant, described as follows.
					
					For $n=1$, given $\mathrm{B}_2\ni A=\begin{bmatrix}
						a & 0\\
						b & c\\
					\end{bmatrix}$ and $(a_0,\ldots,a_{rd-1},s_1)\in\mathbb{A}_{r,1}^{rd+1}$ with associated polynomial $f$, the action is
					\[
					A\cdot(a_0,\ldots,a_{rd-1},s_1)=(a_0',\ldots,a_{rd-1}',c^{-d}s_1)
					\]
					where $a_i'$ are the coefficients of $f'$ defined as $f'(x,y)=f(A^{-1}(x,y))$.
				
					For $n=2$, given $(a,c)\in\mathbb{G}_m\times\mathbb{G}_m$ and $(a_1,\ldots,a_{rd-1},s_1,s_2)\in\mathbb{A}_{r,2}^{rd+1}$, the action is
					\[
						(a,c)\cdot(a_1,\ldots,a_{rd-1},s_1,s_2)=(a^{-(rd-1)}c^{-1}a_1,\ldots,a^{-1}c^{-(rd-1)}a_{rd-1},c^{-d}s_1,a^{-d}s_2).
					\]
					
					For $n=3$, $a\in\mathbb{G}_m$ and $(a_1,\ldots,a_{rd-2},s_1,s_2,s_3)\in\mathbb{A}_{3}^{rd+1}$, the action is
					\[
					a\cdot(a_1,\ldots,a_{rd-2},s_1,s_2)=(a^{-rd}a_1,\ldots,a^{-rd}a_{rd-2},a^{-d}s_1,a^{-d}s_2,a^{-d}s_3).
					\]
					
					We obtain the following.
					\begin{proposition}\label{prop:presentationopenHgnfar23}
						Let $n\leq3$. Then
						\[
							\mathcal{H}_{r,g,n}^{\text{far}}\simeq\left[\mathbb{A}_{r,n,\text{sm}}^{rd+1}/G_{r,g,n}\right]
						\]
						with the action described above.\qed
					\end{proposition}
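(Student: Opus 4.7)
The plan is to identify this proposition as a direct transport of structure from Corollaries~\ref{cor:quotientclosedHgnfar1}, \ref{cor:quotientclosedHgnfar2}, \ref{cor:quotientclosedHgnfar} along the explicit isomorphism $h_n$ defined in the preceding discussion. Writing $f=\sum_{i=0}^{rd}a_i x^{rd-i}y^i$, the scheme $\mathbb{A}_{r,n}(rd)$ sits inside $\mathbb{A}(rd)\times\mathbb{A}^n$ cut out by $a_{rd}=s_1^r$, $a_0=s_2^r$ and (when $n=3$) $a_{rd-1}=s_3^r-s_2^r-s_1^r-\sum_{i=1}^{rd-2}a_i$, so $h_n$ is manifestly a scheme-theoretic isomorphism onto $\mathbb{A}^{rd+1}$ via projection onto the retained coordinates, and it restricts to an isomorphism $\mathbb{A}_{r,n,\text{sm}}(rd)\xrightarrow{\sim}\mathbb{A}_{r,n,\text{sm}}^{rd+1}$ by definition of the latter.

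The substantive part is checking that $h_n$ is $G_{r,g,n}$-equivariant, where $G_{r,g,n}$ acts on $\mathbb{A}^{rd+1}$ through the formulas announced above. I would do this case by case. In each case the action on the source takes $f$ to $f\circ A^{-1}$ and scales each $s_j$ by the appropriate power, by Propositions~\ref{prop:isoHgnfartilde} and \ref{prop:isoHgnfartilde12}. Expanding $f(A^{-1}(x,y))$ in the basis $\{x^{rd-i}y^i\}$ directly produces the prescribed multipliers on the retained coefficients: for $n=3$, $A^{-1}(x,y)=(a^{-1}x,a^{-1}y)$ contributes a uniform factor $a^{-rd}$; for $n=2$, $A^{-1}(x,y)=(a^{-1}x,c^{-1}y)$ contributes $a^{-(rd-i)}c^{-i}$ to $a_i$; for $n=1$, $A^{-1}(x,y)=(a^{-1}x,-b(ac)^{-1}x+c^{-1}y)$ yields the action on $(a_0,\ldots,a_{rd-1})$ that is then transported through $h_n$. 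Compatibility with the eliminated coefficients reduces to the short verification that $f(0,1)$, $f(1,0)$ and $f(1,1)$ transform by $c^{-rd}$, $a^{-rd}$ and $a^{-rd}$ respectively, which matches exactly the $r$-th powers of the scalings $c^{-d}$, $a^{-d}$ and $a^{-d}$ imposed on $s_1$, $s_2$ and $s_3$.

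With $h_n$ identified as a $G_{r,g,n}$-equivariant isomorphism of schemes, we obtain the induced isomorphism of quotient stacks $[\mathbb{A}_{r,n,\text{sm}}(rd)/G_{r,g,n}]\simeq[\mathbb{A}_{r,n,\text{sm}}^{rd+1}/G_{r,g,n}]$, and combining with the cited corollaries yields the claim. There is no real obstacle in this proof, only careful bookkeeping: the indexing convention $f=\sum a_i x^{rd-i}y^i$ must be kept straight, and one must recall that the $G_{r,g,n}$-action on the source was defined precisely by $f\mapsto f(A^{-1}(x,y))$, which is what makes the action on $\mathbb{A}^{rd+1}$ come out as advertised.
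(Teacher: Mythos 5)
Your argument is correct and matches the paper's own (implicit) proof: the proposition is stated with no separate argument precisely because it follows from the observation that the defining equations of $\mathbb{A}_{r,n}(rd)$ express the eliminated coefficients as polynomials in the retained coordinates, so $h_n$ is an isomorphism, together with the transported $G_{r,g,n}$-action and Corollaries~\ref{cor:quotientclosedHgnfar1}, \ref{cor:quotientclosedHgnfar2}, \ref{cor:quotientclosedHgnfar}. Your equivariance bookkeeping (including the substitution of $a_{rd}=s_1^r$ in the $n=1$ case) is exactly the verification the paper leaves to the reader.
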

					\textbf{Case $3\leq n\leq rd+1$.}
					The idea is the same, but with some technical difficulties. We want to translate inductively the relation $f(1,p_i)=t_i^r$ in a relation of one coefficient of $f$, of the form $a_j\lambda_j=\psi_j$ for some regular functions $\lambda_j$ and $\psi_j$ not depending on $a_j$, and to invert $\lambda_j$. In particular, this implies the rationality of $\mathbb{A}_{r,n}(rd)$ for $n\leq rd+1$.
					
					First, define $\mathbb{A}_{r,n}^{rd-2+n}$ to be the open subscheme of $\mathbb{A}^{rd-2+n}$ where elements
					\[
					(a_1,\ldots,a_{rd+1-n},p_1,\ldots,p_{n-3},s_1,s_2,s_3,t_{1},\ldots,t_{n-3})\in\mathbb{A}^{rd-2+n}
					\]
					satisfies $p_i\not=p_j$ for all $i\not=j$, and are different from 0 and 1. If $\widetilde{\Delta}\subset\mathbb{A}^{n-3}$ is the union of the extended diagonal and the hyperplanes given by asking at least one coordinate to be equal to 0 or 1, we have $\mathbb{A}_{r,n}^{rd-2+n}=\mathbb{A}^{rd+1-n}\times(\mathbb{A}^{n-3}\setminus\widetilde{\Delta})\times\mathbb{A}^n$.
					
					There is an action of $\mathbb{G}_m/\mu_{d}$ on $\mathbb{A}^{rd-2+n}$ as follows. Let $a\in\mathbb{G}_m$, and
					\[
						(a_1,\ldots,a_{rd+1-n},p_1,\ldots,p_{n-3},s_1,s_2,s_3,t_{1},\ldots,t_{n-3})\in\mathbb{A}^{rd-2+n}
					\]
					then
					\begin{align*}
						&a\cdot(a_1,\ldots,a_{rd+1-n},p_1,\ldots,p_{n-3},s_1,\ldots,t_{n-3})\\
						&=(a^{-rd}a_1,\ldots,a^{-rd}a_{rd+1-n},p_1,\ldots,p_{n-3},a^{-d}s_1,\ldots,a^{-d}t_{n-3}).
					\end{align*}
					Of course, if $n=rd+1$ there are no $a_i$ terms. Notice that $\mathbb{A}_{r,n}^{rd-2+n}$ is invariant under this action.
					\begin{lemma}\label{lem:rationality}
						Let $3\leq n\leq rd+1$. Then the projection $h_n:\mathbb{A}_{r,n}(rd)\rightarrow\mathbb{A}^{rd-2+n}$ given by
						\[
							\begin{tikzcd}
								(f,p_1,\ldots,p_{n-3},s_1,\ldots,t_{n-3})\arrow[r,mapsto,"h_n"] & (a_1,\ldots,a_{rd+1-n},p_1,\ldots,p_{n-3},s_1,\ldots,t_{n-3})
							\end{tikzcd}
						\]
						is a $(\mathbb{G}_m/\mu_{d})$-equivariant open immersion with image $\mathbb{A}_{r,n}^{rd-2+n}$.
						The $(\mathbb{G}_m/\mu_{d})$-equivariant inverse
						\[
							\begin{tikzcd}
								g_n:\mathbb{A}_{r,n}^{rd-2+n}\arrow[r] & \mathbb{A}_{r,n}(rd)
							\end{tikzcd}
						\]
						is of the form
						\begin{align*}
							&g_n(a_1,\ldots,a_{rd+1-n},p_1,\ldots,p_{n-3},s_1,\ldots,t_{n-3})\\
							&=(s_2^r,a_1,\ldots,a_{rd+1-n},\varphi^{(n)}_{n-3},\varphi^{(n)}_{n-4},\ldots,\varphi^{(n)}_{1},\varphi^{(n)}_{0},s_1^r,p_1,\ldots,p_{n-3},s_1,\ldots,t_{n-3})
						\end{align*}
						where the $\varphi_{i}^{(n)}$ are as follows.
						\begin{enumerate}[label=\textbf{\arabic*}.]
							\item For all $i$,
							\[
								\varphi_i^{(n)}\in k[a_1,\ldots,a_{rd+1-n},s_1,s_2,s_3,t_1,\ldots,t_{n-3}](p_1,\ldots,p_{n-3})
							\]
							and it is regular over $\mathbb{A}_{r,n}^{rd-2+n}$.
							\item Every monomial in $\varphi_i^{(n)}$ can be of two types. If $a_j$ with $1\leq j\leq rd+1-n$ appears in that monomial, it does with degree 1, and no other $a_v$ or one between $s_1,s_2,s_3\ldots,t_{n-3}$ appears in it. If no $a_j$ appears in that monomial, then it is of degree $r$ in the variables $s_1,s_2,s_3,t_1,\ldots,t_{n-3}$.

							Equivalently, $\varphi_i^{(n)}$ is homogeneous of degree $r$ in $a_1,\ldots,a_{rd+1-n}$ and $s_1,s_2,s_3,t_1,\ldots,t_{n-3}$, if it is assigned degree $r$ to the first set of variables and degree 1 to the second.
							\item We have,
							\[
								\varphi^{(n)}_{0}=s_3^r-s_2^r-\sum_{i=1}^{rd+1-n}a_i-\sum_{j=1}^{n-3}\varphi_j^{(n)}-s_1^r.
							\]
							\item For all $i<n-3$,
							\begin{align*}
								&\varphi_i^{(n)}(a_1,\ldots,a_{rd+1-n},p_1,\ldots,p_{n-3},s_1,s_2,s_3,t_1,\ldots,t_{n-3})\\
								&=\varphi_i^{(n-1)}(a_1,\ldots,a_{rd+1-n},\varphi_{n-3}^{(n)},p_1,\ldots,p_{n-4},s_1,s_2,s_3,t_1,\ldots,t_{n-4}).
							\end{align*}
						\end{enumerate}
					\end{lemma}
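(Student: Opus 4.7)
The plan is to construct the morphism $g_n$ explicitly and verify the four listed properties by induction on $n \geq 3$. The equations $f(0,1) = s_1^r$ and $f(1,0) = s_2^r$ force $a_{rd} = s_1^r$ and $a_0 = s_2^r$, so only the $n-2$ middle coefficients $a_{rd+2-n}, \ldots, a_{rd-1}$ of $f$ remain to be determined, and they are governed by the $n-2$ linear equations $f(1,1) = s_3^r$ and $f(1, p_i) = t_i^r$. After dividing the $i$-th row by $p_i^{rd+2-n}$, the coefficient matrix is the $(n-2) \times (n-2)$ Vandermonde matrix at the pairwise distinct nodes $1, p_1, \ldots, p_{n-3}$, hence invertible on $\mathbb{A}_{r,n}^{rd-2+n}$. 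Its unique solution defines $g_n$ as a morphism, and since $h_n \circ g_n = \mathrm{id}$ and the image of $h_n$ is clearly contained in $\mathbb{A}_{r,n}^{rd-2+n}$, this shows that $h_n$ is an open immersion with image exactly $\mathbb{A}_{r,n}^{rd-2+n}$.

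For the inductive description of the $\varphi^{(n)}_i$, the base case $n = 3$ has only the equation $f(1,1) = s_3^r$, which yields $\varphi^{(3)}_0 = s_3^r - s_2^r - s_1^r - \sum_{i=1}^{rd-2} a_i$ and trivially satisfies the four properties. For the inductive step, I would temporarily promote $a_{rd+2-n}$ to a free coefficient and apply the inductive hypothesis to the $n-3$ equations $f(1,1) = s_3^r$ and $f(1, p_i) = t_i^r$ for $i = 1, \ldots, n-4$; this expresses $a_{rd+3-n}, \ldots, a_{rd-1}$ as $\varphi^{(n-1)}_{n-4}, \ldots, \varphi^{(n-1)}_0$, regarded as functions of $(a_1, \ldots, a_{rd+2-n}, p_1, \ldots, p_{n-4}, s_1, s_2, s_3, t_1, \ldots, t_{n-4})$. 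Substituting these into the remaining equation $f(1, p_{n-3}) = t_{n-3}^r$ produces a linear equation in the single unknown $a_{rd+2-n}$, whose coefficient is the Schur complement of the full Vandermonde system, equal up to sign to $p_{n-3}^{rd+2-n}(p_{n-3}-1)\prod_{i=1}^{n-4}(p_{n-3}-p_i)/\prod_{i=1}^{n-4}p_i$ and hence invertible on $\mathbb{A}_{r,n}^{rd-2+n}$. Solving gives $\varphi^{(n)}_{n-3}$; substituting $a_{rd+2-n} \mapsto \varphi^{(n)}_{n-3}$ into the $\varphi^{(n-1)}_i$ then produces $\varphi^{(n)}_i$ for $i < n-3$, which is property 4, while property 3 is the equation $f(1,1) = s_3^r$ rewritten.

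Regularity over $\mathbb{A}_{r,n}^{rd-2+n}$ (property 1) is inherited inductively since the only new denominator introduced at each step is the nowhere-vanishing Schur complement above. The bi-graded homogeneity (property 2, with $a_j$ of weight $r$, each $s_j, t_j$ of weight $1$, and each $p_j$ of weight $0$) is also inherited: each defining equation is homogeneous of weight $r$ in this grading, so solving linearly for $a_{rd+2-n}$ yields a weight-$r$ expression, and substituting a weight-$r$ expression in place of a weight-$r$ variable preserves the grading. The $(\mathbb{G}_m/\mu_d)$-equivariance of $g_n$ is then immediate because both monomial types scale as $a^{-rd}$ under the action, matching the transformation of each coefficient of $f$. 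The main technical obstacle will be the careful bookkeeping of the Vandermonde-type denominators and the bi-grading through the inductive substitution, but both reduce to direct algebraic computations using the Schur complement identity together with the inductive hypothesis.
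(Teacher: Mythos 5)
Your proposal is correct, and it reaches the same recursive formulas as the paper, but the decisive step is justified by a genuinely different argument. The paper runs a single induction carrying an auxiliary ``Claim'' that puts the defining relations in the normal form $a_i\lambda^{(n)}_{rd-1-i}=\psi^{(n)}_{rd-1-i}$, and it proves invertibility of the new pivot $\lambda^{(n)}_{n-3}=p_{n-3}^{rd+2-n}\widetilde{\lambda}^{(n)}_{n-3}$ qualitatively: $\widetilde{\lambda}^{(n)}_{n-3}$ has degree at most $n-3$ in $p_{n-3}$ with constant term $1$, and it vanishes exactly where the new relation fails to involve $a_{rd+2-n}$, namely at $p_{n-3}\in\{0,1,p_1,\ldots,p_{n-4}\}$, so these exhaust its roots and it is a unit on $\mathbb{A}^{rd-2+n}_{r,n}$. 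You instead make the linear algebra explicit: the $n-2$ equations $f(1,1)=s_3^r$, $f(1,p_i)=t_i^r$ in the $n-2$ middle coefficients become, after dividing the row at $p_i$ by $p_i^{rd+2-n}$, a Vandermonde system at the distinct nodes $1,p_1,\ldots,p_{n-3}$; Cramer's rule then gives $g_n$ in one shot (with the bigrading, regularity, and equivariance read off from the solution, exactly as the paper deduces equivariance from property 2), and the induction is only needed to match the recursive description in property 4, where your Schur-complement formula $\pm p_{n-3}^{rd+2-n}(p_{n-3}-1)\prod_{i=1}^{n-4}(p_{n-3}-p_i)/\prod_{i=1}^{n-4}p_i$ agrees, up to the normalization making the constant term $1$, with the paper's characterization of $\widetilde{\lambda}^{(n)}_{n-3}$ by its roots. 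What your route buys is a closed formula for the pivot and a direct proof that $h_n$ is an isomorphism onto $\mathbb{A}^{rd-2+n}_{r,n}$ without the simultaneous Claim; what the paper's route buys is freedom from determinant bookkeeping, producing the $\psi^{(n)}_i,\lambda^{(n)}_i$ in exactly the recursive shape the later sections use. One small point to make explicit when writing it up: $h_n\circ g_n=\mathrm{id}$ together with containment of the image does not by itself give injectivity of $h_n$; you also need $g_n\circ h_n=\mathrm{id}$, which follows (on $T$-points, so scheme-theoretically) from the uniqueness of the solution of the Vandermonde system that you already invoke.
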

					\begin{proof}
						The fact that $h_n$ is a $\mathbb{G}_m/\mu_{d}$-equivariant immersion follows immediately from the structure of the map and the existence of $g_n$. Moreover, the equivariance of $g_n$ follows from property \textbf{2}. Therefore, it is enough to show the existence of such $\varphi_i^{(n)}$. We do this by induction on $n$, starting from $n=3$, proving at the same time the following claim. It is important to prove them simultaneously, since they follow each other for fixed $n$.
						
						\textbf{Claim:} under the assumptions of this Lemma, the relations of $\mathbb{A}_{r,n}(rd)$ in the space $\mathbb{A}(rd)\times(\mathbb{A}^{n-3}\setminus\widetilde{\Delta})\times\mathbb{A}^n$ reduce to
						\[
						a_0=s_2^r,\quad a_{rd}=s_1^r,\quad a_{i}\lambda_{rd-1-i}^{(n)}=\psi_{rd-1-i}^{(n)}\quad\text{for all }i\in\{rd+2-n,\ldots,rd-1\}
						\]
						where the following holds.
						\begin{enumerate}[label=\textbf{\alph*}.]
							\item For all $i\in\{0,\ldots,n-3\}$,
							\[
								\psi_i^{(n)}\in k[a_1,\ldots,a_{rd+1-n},s_1,s_2,s_3,t_1,\ldots,t_{n-3}](p_1,\ldots,p_{n-3})
							\]
							and
							\[
								\lambda_i^{(n)}\in k(p_1,\ldots,p_i).
							\]
							Moreover, they are regular on $\mathbb{A}_{r,n}^{rd-2+n}$.
							\item $\lambda_i^{(n)}$ is invertible in $\mathrm{H}^0(\mathbb{A}_{r,n}^{rd-2+n},\mathcal{O}_{\mathbb{A}_{r,n}^{rd-2+n}})$.
							\item $\psi_i^{(n)}$ is homogeneous of degree $r$ in $a_1,\ldots,a_{rd+1-n}$ and $s_1,\ldots,t_{n-3}$, if it is given degree $r$ to the first set of variables and degree 1 to the second.
							\item $\lambda_{0}^{(n)}\equiv1$ (so $\psi_{0}^{(n)}=\varphi_0^{(n)}$).
							\item For all $i<n-3$,
							\begin{align*}
								&\psi_i^{(n)}(a_1,\ldots,a_{rd+1-n},p_1,\ldots,p_{n-3},s_1,s_2,s_3,t_1,\ldots,t_{n-3})\\
								&=\psi_i^{(n-1)}(a_1,\ldots,a_{rd+1-n},\varphi_{n-3}^{(n)},p_1,\ldots,p_{n-4},s_1,s_2,s_3,t_1,\ldots,t_{n-4})
							\end{align*}
							where $\varphi_{n-3}^{(n)}=\psi_{n-3}^{(n)}/\lambda_{n-3}^{(n)}$. Moreover, $\lambda_i^{(n)}=\lambda_i^{(i+3)}$.
						\end{enumerate}
						The case $n=3$ is already been taken care of in Proposition~\ref{prop:presentationopenHgnfar23}, so we can proceed with the inductive step. First, notice that if the claim holds for $n$, then the Lemma for $n$ follows from the case $n-1$. Indeed, since $\lambda_{n-3}^{(n)}$ is invertible, we may define $\varphi_{n-3}^{(n)}=\psi_{n-3}^{(n)}/\lambda_{n-3}^{(n)}$, as in \textbf{e} of the claim. We can then define $\varphi_{i}^{(n)}$ for $i<n-3$ as defined in \textbf{4}, which is therefore automatically satisfied. Also \textbf{3} is automatic. Moreover, from \textbf{e} it follows that $\varphi_{i}^{(n)}=\psi_{i}^{(n)}/\lambda_{i}^{(n)}$. Therefore, \textbf{1} and \textbf{2} follows from \textbf{a} and \textbf{c} respectively. Finally, by construction, $g_n$ is indeed the inverse of $h_n$.
						
						We are left with proving that the claim for $n$ follows from the statements of this Lemma and the claim both for $n-1$. Writing the relation
						\[
							f(1,p_{n-3})=t_{n-3}^r
						\]
						in terms of the coefficients of $f$ and using the inductive step, we get
						\begin{align*}
							t_{n-3}^r=s_2^r&+a_1p_{n-3}+\ldots+a_{rd+2-n}p_{n-3}^{rd+2-n}\\
							&+\varphi^{(n-1)}_{n-4}p_{n-3}^{rd+3-n}+\ldots+\varphi^{(n-1)}_{0}p_{n-3}^{rd-1}+s_1^rp_{n-3}^{rd}.
						\end{align*}
						Isolating the coefficient $a_{rd+2-n}$ (keep in mind the $\varphi_i^{(n-1)}$ depend on that coefficient), we get
						\[
							a_{rd+2-n}\lambda_{n-3}^{(n)}=\psi_{n-3}^{(n)}
						\]
						where $\lambda_{n-3}^{(n)}=p_{n-3}^{rd+2-n}\widetilde{\lambda}_{n-3}^{(n)}$, with
						\begin{align*}
							\widetilde{\lambda}_{n-3}^{(n)}&=1+\sum_{i=rd+3-n}^{rd-1}(\text{coefficient of }a_{rd+2-n}\text{ in }\varphi_{rd-1-i}^{(n-1)})\cdot p_{n-3}^{i-(rd+2-n)}\\
							&=1+\sum_{i=1}^{n-3}(\text{coefficient of }a_{rd+2-n}\text{ in }\varphi_{n-3-i}^{(n-1)})\cdot p_{n-3}^{i}
						\end{align*}
						and $\psi_{n-3}^{(n)}$ is of the form
						\begin{align*}
							\psi_{n-3}^{(n)}=t_{n-3}^r&-s_2^r-\sum_{i=1}^{rd+2-n}a_ip_{n-3}^i\\
							&-\sum_{i=0}^{n-4}(\text{terms in }\varphi_{n-4-i}^{(n-1)}\text{ without }a_{rd+2-n})\cdot p_{n-3}^{rd+3-n+i}-s_1^rp_{n-3}^{rd}.
						\end{align*}
						Thanks to both \textbf{1} and \textbf{2} for $n-1$, the properties \textbf{a} and \textbf{c} hold for $n$. Moreover, by the inductive hypothesis, the same type of relation holds for all $a_{rd-1-i}$ with $i<n-3$, and we can extend the definition of $\psi_{i}^{(n)}$ and $\lambda_i^{(n)}$ imposing that \textbf{e} holds. We are left with proving that \textbf{b} holds for $i=n-3$, since for $i<n-3$ it follows by the inductive hypothesis, being $\lambda_i^{(n)}=\lambda_i^{(i+3)}$ (and \textbf{d} is automatic).
						
						We already know that $\lambda_{n-3}^{(n)}$ is a quotient of polynomials in the variables $p_j$. Consider now the $p_i$ as fixed coefficients for $i<n-3$. Then, by construction, $\widetilde{\lambda}_{n-3}^{(n)}$ is a non-zero polynomial of degree at most $n-3$ in $p_{n-3}$, with constant term 1. Moreover, $\lambda_{n-3}^{(n)}$ vanishes exactly where the relation $f(1,p_{n-3})=t_{n-3}^r$ does not influence the coefficient $a_{rd+2-n}$. Clearly, this happens if $p_{n-3}$ assumes one of the following values:
						\[
							0, 1, p_1,\ldots, p_{n-4}.
						\]
						Therefore, $1,p_1,\ldots,p_{n-4}$ are distinct roots of $\widetilde{\lambda}_{n-3}^{(n)}$, and by the above remark about the degree of that polynomial, they are the all and only roots (this incidently shows that $\widetilde{\lambda}_{n-3}^{(n)}$ has exactly degree $n-3$ in $p_{n-3}$). Therefore, $\lambda_{n-3}^{(n)}$ is always non-zero in $\mathbb{A}_{r,n}^{rd-2+n}$, hence invertible. This proves \textbf{b}, and concludes the proof.
					\end{proof}
					\begin{proposition}\label{prop:quotientopenHgnfar}
						Let $3\leq n\leq rd+1$. Let $\mathbb{A}^{rd-2+n}_{r,n,\text{sm}}$ be the image of $\mathbb{A}_{r,n,\text{sm}}(rd)$ under $h_n$. Then
						\[
						\mathcal{H}_{r,g,n}^{\text{far}}\simeq\left[\mathbb{A}_{r,n,\text{sm}}^{rd-2+n}/G_{r,g,n}\right]
						\]
						with the action described above.\qed
					\end{proposition}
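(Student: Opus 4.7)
The plan is simply to combine the two main ingredients already assembled in the preceding subsection, namely Corollary~\ref{cor:quotientclosedHgnfar} (which presents $\mathcal{H}_{r,g,n}^{\text{far}}$ as $[\mathbb{A}_{r,n,\text{sm}}(rd)/G_{r,g,n}]$ with $G_{r,g,n}\simeq\mathbb{G}_m/\mu_d$) and Lemma~\ref{lem:rationality} (which exhibits $h_n\colon\mathbb{A}_{r,n}(rd)\to\mathbb{A}^{rd-2+n}$ as a $G_{r,g,n}$-equivariant open immersion with image $\mathbb{A}_{r,n}^{rd-2+n}$, and produces an equivariant inverse $g_n$ on that image).

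First I would observe that by the very definition of $\mathbb{A}_{r,n,\text{sm}}^{rd-2+n}$ as the $h_n$-image of $\mathbb{A}_{r,n,\text{sm}}(rd)$, the open immersion $h_n$ restricts to a $G_{r,g,n}$-equivariant isomorphism of schemes
\[
h_n\bigl|_{\mathbb{A}_{r,n,\text{sm}}(rd)}\colon \mathbb{A}_{r,n,\text{sm}}(rd)\xrightarrow{\ \sim\ }\mathbb{A}_{r,n,\text{sm}}^{rd-2+n},
\]
whose inverse is the corresponding restriction of $g_n$. Equivariance on the nose is already built into Lemma~\ref{lem:rationality} through the shape of $g_n$ (property \textbf{2} in its statement, which forces each $\varphi_i^{(n)}$ to transform with weight $r$ in the $\mathbb{G}_m$-grading, exactly so as to match the weight of the coefficients $a_i$ it replaces).

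From there the conclusion is formal: an equivariant isomorphism between $G_{r,g,n}$-schemes induces an isomorphism of quotient stacks $[\mathbb{A}_{r,n,\text{sm}}(rd)/G_{r,g,n}]\simeq [\mathbb{A}_{r,n,\text{sm}}^{rd-2+n}/G_{r,g,n}]$, which combined with Corollary~\ref{cor:quotientclosedHgnfar} gives the desired presentation. The action described above on $\mathbb{A}^{rd-2+n}$ is, by construction, the push-forward of the one on $\mathbb{A}_{r,n}(rd)$ under $h_n$, so nothing further needs to be checked.

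In short, there is no real obstacle left at this stage; all the effort lives in Lemma~\ref{lem:rationality}, whose inductive construction of the regular functions $\varphi_i^{(n)}$ and verification that the auxiliary factors $\lambda_i^{(n)}$ are invertible on $\mathbb{A}_{r,n,\text{sm}}^{rd-2+n}$ already produces the equivariant trivialization needed here. The present proposition is simply its stack-theoretic translation, so my write-up would consist of one short paragraph invoking these two results in sequence.
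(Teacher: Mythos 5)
Your argument is correct and is exactly what the paper intends: the proposition is stated with no written proof precisely because it follows immediately by combining Corollary~\ref{cor:quotientclosedHgnfar} with the $G_{r,g,n}$-equivariant open immersion $h_n$ of Lemma~\ref{lem:rationality}, restricted to the smooth locus. Nothing in your write-up deviates from, or adds a gap to, that intended one-line deduction.
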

					Everything we have done so far has been already done in~\cite{AV04} for $n=0$ and in~\cite{Per22} for $(r,n)=(2,1)$. Obviously, in these cases $\mathcal{H}_{r,g,n}^{\text{far}}$ is just $\mathcal{H}_{r,g,n}$. We report here the result of Arsie and Vistoli, since it will be used in what follows. Here $\mathbb{A}_{r,0,\text{sm}}^{rd+1}$ is simply $\mathbb{A}_{\text{sm}}(rd)$.
					\begin{proposition}\label{prop:quotientopenHgn01}
						Let $n=0$, then
						\[
							\mathcal{H}_{r,g}=\left[\mathbb{A}_{r,0,\text{sm}}^{rd+1}/G_{r,g,0}\right]
						\]
						with $G_{r,g,0}=\mathrm{GL}_2/\mu_{d}$, where $\mu_{d}$ injects diagonally in $\mathrm{GL}_2$. The action is given by
						\[
							A\cdot f(x,y)=f(A^{-1}(x,y)).
						\]
					\end{proposition}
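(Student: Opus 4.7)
The plan is to mimic the proofs of Corollaries~\ref{cor:quotientclosedHgnfar1}, \ref{cor:quotientclosedHgnfar2}, and~\ref{cor:quotientclosedHgnfar}, now in the simpler setting where there are no sections, so that the whole stack $\mathcal{H}_{r,g}$ itself (rather than a "far" open substack) admits a global presentation. Concretely, first use the equivalence $\mathcal{H}_{r,g}\simeq\mathcal{H}'_{r,g}$ (the $n=0$ case of Proposition~\ref{prop:equivalence}, which is~\cite[Proposition 3.4]{AV04}) to replace objects of $\mathcal{H}_{r,g}$ by triples $(\pi:P\to S, L, i:L^{\otimes r}\hookrightarrow\mathcal{O}_P)$ with the smoothness condition on the branch divisor.

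Next, I would rigidify by introducing the fibered category $\widetilde{\mathcal{H}}_{r,g}$ whose objects are such triples together with an isomorphism $\phi:(P,L)\xrightarrow{\sim}(\mathbb{P}^1_S,\mathcal{O}(-d))$, and verify that the forgetful map $\widetilde{\mathcal{H}}_{r,g}\to\mathcal{H}_{r,g}$ is a $G_{r,g,0}$-torsor for $G_{r,g,0}:=\underline{\mathrm{Aut}}_k(\mathbb{P}^1_k,\mathcal{O}(-d))$, exactly as in Proposition~\ref{prop:torsor}. The rigidification kills all nontrivial automorphisms of the pair $(P,L)$, so $\widetilde{\mathcal{H}}_{r,g}$ is equivalent to a functor: once $\phi$ is chosen, the datum $i:\mathcal{O}_{\mathbb{P}^1_S}(-rd)\hookrightarrow\mathcal{O}_{\mathbb{P}^1_S}$ is a global section of $\mathcal{O}_{\mathbb{P}^1_S}(rd)$, i.e.\ a homogeneous polynomial $f\in\mathbb{A}(rd)(S)$, and the conditions that $i$ stay injective on geometric fibers and that its vanishing divisor be smooth over $S$ are precisely the conditions defining $\mathbb{A}_{\mathrm{sm}}(rd)=\mathbb{A}^{rd+1}_{r,0,\mathrm{sm}}$. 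This yields $\widetilde{\mathcal{H}}_{r,g}\simeq\mathbb{A}^{rd+1}_{r,0,\mathrm{sm}}$.

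I would then identify $G_{r,g,0}\simeq\mathrm{GL}_2/\mu_d$. The group $\mathrm{GL}_2$ acts on $\mathbb{P}^1$ in the standard way, and on $\mathcal{O}(-d)$ via its natural action on $\mathrm{Sym}^d$ of the tautological quotient; scalar matrices $\lambda I$ act trivially on $\mathbb{P}^1$ and by $\lambda^{-d}$ on $\mathcal{O}(-d)$, so act trivially on the pair $(\mathbb{P}^1,\mathcal{O}(-d))$ exactly when $\lambda\in\mu_d$. Surjectivity onto $G_{r,g,0}$ amounts to the fact that every automorphism of $(\mathbb{P}^1_S,\mathcal{O}(-d))$ étale-locally lifts to an element of $\mathrm{GL}_2$, which is standard. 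Tracing the identifications through, an element $A\in\mathrm{GL}_2$ post-composes $\phi$ with $A$, and the corresponding transformation on $f$ is $f\mapsto f\circ A^{-1}$, giving the displayed action. Combining with the torsor property yields $\mathcal{H}_{r,g}\simeq[\widetilde{\mathcal{H}}_{r,g}/G_{r,g,0}]\simeq[\mathbb{A}^{rd+1}_{r,0,\mathrm{sm}}/(\mathrm{GL}_2/\mu_d)]$.

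The main obstacle, as in~\cite{AV04}, is the identification of $G_{r,g,0}$ with $\mathrm{GL}_2/\mu_d$: one must check that the natural map $\mathrm{GL}_2\to G_{r,g,0}$ is faithfully flat with kernel $\mu_d$ as fppf sheaves, not merely on $k$-points. Since this entire proposition is just a recollection of~\cite[Theorem 4.1]{AV04}, I would simply cite that theorem for this step rather than reprove it; the only genuine content of the present proposition is to fix the notation $G_{r,g,0}$ and $\mathbb{A}^{rd+1}_{r,0,\mathrm{sm}}$ to match the conventions introduced in subsections~\ref{subsec:firstpresentation} and~\ref{subsec:secondpresentation}.
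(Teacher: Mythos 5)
Your proposal is correct and matches the paper, which simply cites~\cite[Corollary 4.2]{AV04} for this statement; your sketch of the rigidification torsor and the identification $G_{r,g,0}\simeq\mathrm{GL}_2/\mu_d$ is exactly the Arsie--Vistoli argument that the paper (and its pointed variants in Propositions~\ref{prop:isoHgnfartilde} and~\ref{prop:torsor}) rely on, and your decision to cite~\cite{AV04} for the group identification is in line with the paper's treatment.
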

					\begin{proof}
						See~\cite[Corollary 4.2]{AV04}.
					\end{proof}
				\subsection{The divisor $\Delta_{r,g,n}$}
					In this subsection we always assume $n\leq rd+1$.
					In order to use the description of $\mathcal{H}_{r,g,n}^\text{far}$ found in~\ref{prop:presentationopenHgnfar23}, in~\ref{prop:quotientopenHgnfar} and in~\ref{prop:quotientopenHgn01} to compute $\mathrm{Pic}(\mathcal{H}_{r,g,n}^{\text{far}})$, it is crucial to study the complement $\Delta_{r,g,n}$ of $\mathbb{A}_{r,n,\text{sm}}^{rd-2+n}$ in $\mathbb{A}_{r,n}^{rd-2+n}$ for $3\leq n\leq rd+1$, and the complement of $\mathbb{A}_{r,n,\text{sm}}^{rd+1}$ in $\mathbb{A}_{r,n}^{rd+1}$ for $n\leq3$. For now $\Delta_{r,g,n}$ is considered just as a topological space.
					
					At the end of the proof of~\cite[Theorem 5.1]{AV04} it is shown that $\Delta_{r,g,0}$ can be described as the zero locus of a homogeneous polynomial $\Phi_{r,g,0}$ of degree $2(rd-1)$ in the variables $a_0,\ldots,a_{rd}$, and that this polynomial is irreducible if the characteristic of the ground field $k$ does not divide $2rd$, which is our case. We consider $\Delta_{r,g,0}$ with that scheme structure.
					
					Now, we define the scheme structure of $\Delta_{r,g,n}$ via the following diagram with cartesian squares and $\mathbb{G}_m$-equivariant maps (where $\mathbb{G}_m$ is also seen as the group of invertible diagonal $2\times2$ matrices that are multiples of the identity).
					\[
						\begin{tikzcd}[column sep=16pt]
							\Delta_{r,g,n}\arrow[d]\arrow[r] & \Delta_{r,g,n-1}\arrow[d]\arrow[r] & ...\arrow[d,dashed]\arrow[r] & \Delta_{r,g,3}\arrow[d]\arrow[r] & \Delta_{r,g,2}\arrow[d]\arrow[r] & \Delta_{r,g,1}\arrow[d]\arrow[r] & \Delta_{r,g,0}\arrow[d]\\
							\mathbb{A}_{r,n}^{rd-2+n}\arrow[r,"q_{n,n-1}"'] & \mathbb{A}_{r,n-1}^{rd-3+n}\arrow[r] & ...\arrow[r] & \mathbb{A}_{r,3}^{rd+1}\arrow[r,"q_{3,2}"'] &\mathbb{A}_{r,2}^{rd+1}\arrow[r,"q_{2,1}"'] &\mathbb{A}_{r,1}^{rd+1}\arrow[r,"q_{1,0}"'] &\mathbb{A}_{r,0}^{rd+1}
						\end{tikzcd}
					\]
					The maps $q_{n,n-1}:\mathbb{A}_{r,n}^{rd-2+n}\rightarrow\mathbb{A}_{r,n-1}^{rd-3+n}$ are given simply by reconstructing $a_{rd+2-n}$ using $\varphi_{n-3}^{(n)}$, see Lemma~\ref{lem:rationality}, and forgetting the last section, and similarly for $q_{3,2}$, $q_{2,1}$ and $q_{1,0}$. Essentially, together they give a factorization of the map $g_n$ of Lemma~\ref{lem:rationality} composed with the map which forgets the sections.
					
					By construction, $\Delta_{r,g,n}$ is the zeros locus of a polynomial $\Phi_{r,g,n}$, thanks to the result mentioned before for $n=0$. It is easy to describe $\Phi_{r,g,n}$ in terms of $\Phi_{r,g,0}$. Indeed, it holds
					\begin{align}
						&\Phi_{r,g,1}(a_0,\ldots,a_{rd-1},s_1)=\Phi_{r,g,0}(a_0,\ldots,a_{rd-1},s_1^r)\\
						&\Phi_{r,g,2}(a_1,\ldots,a_{rd-1},s_1,s_2)=\Phi_{r,g,0}(s_2^r,a_1,\ldots,a_{rd-1},s_1^r)
					\end{align}
					and for $n\geq3$
					\begin{align}
						&\Phi_{r,g,n}(a_1,\ldots,a_{rd+1-n},p_1,\ldots,p_{n-3},s_1,s_2,s_3,t_1,\ldots,t_{n-3})\nonumber\\
						&=\Phi_{r,g,0}(s_2^r,a_1,\ldots,a_{rd+1-n},\varphi^{(n)}_{n-3},\varphi^{(n)}_{n-4},\ldots,\varphi^{(n)}_{1},\varphi^{(n)}_{0},s_1^r).
					\end{align}
					\begin{proposition}\label{prop:integralityDeltagn}
						The closed subscheme $\Delta_{r,g,n}$ is integral, or equivalently $\Phi_{r,g,n}$ is irreducible.
					\end{proposition}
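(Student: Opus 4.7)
My plan is to induct on $n$, with the base case $n=0$ being the irreducibility of $\Phi_{r,g,0}$ noted just before the statement and established at the end of the proof of \cite[Theorem 5.1]{AV04}. For the inductive step, I would exploit the cartesian squares in the defining diagram: $\Delta_{r,g,n}$ is the pullback of $\Delta_{r,g,n-1}$ along $q_{n,n-1}$. Writing $X_n=\mathbb{A}_{r,n}^{rd-2+n}$ for $n\geq 3$ and $X_n=\mathbb{A}_{r,n}^{rd+1}$ for $n\leq 3$, the goal is to show that $q_{n,n-1}$ is flat with geometrically integral generic fiber over $\Delta_{r,g,n-1}$. Since $\Delta_{r,g,n-1}$ is integral by the inductive hypothesis, this will force the pullback $\Delta_{r,g,n}$ to be integral as well.

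The fibers of $q_{n,n-1}$ are described explicitly via Lemma~\ref{lem:rationality}. For $n\leq 3$ the map is finite flat of degree $r$: it introduces a new coordinate $s_n$ subject to one equation $s_n^r=c_n$, where $c_n$ is an explicit polynomial in the coordinates of $X_{n-1}$. For $n\geq 4$ it has one-dimensional fibers, each an open subscheme of the affine plane curve $\{t^r=f(1,p)\}$, with $f$ the polynomial reconstructed from the $X_{n-1}$-data. In either case, geometric integrality of the fiber reduces, via Capelli's theorem, to the claim that the relevant quantity ($c_n$ for $n\leq 3$, respectively $f(1,p)$ for $n\geq 4$) is not an $\ell$-th power in the appropriate function field for any prime $\ell\mid r$, together with a mild extra condition when $4\mid r$.

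The hard part is exactly this non-$\ell$-th-power check, carried out after restriction to the generic point of $\Delta_{r,g,n-1}$. The intuition is that, since the coefficients of $f$ vary freely on $\Delta_{r,g,n-1}$ modulo the single irreducible relation $\Phi_{r,g,n-1}=0$, the relevant quantity cannot accidentally be a proper power. To make this rigorous I would exhibit, for each prime $\ell\mid r$, a prime divisor on $\Delta_{r,g,n-1}$ along which the relevant function has valuation exactly $1$ (a transverse zero of $c_n$ when $n\leq 3$, or a simple zero of $f(1,p)$ in the variable $p$ when $n\geq 4$), and argue that such a divisor exists using the remaining freedom in the coefficients of $f$. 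As a safety net, $\mathbb{A}_{r,n,\mathrm{sm}}^{rd-2+n}$ (or $\mathbb{A}_{r,n,\mathrm{sm}}^{rd+1}$) is geometrically integral as a $G_{r,g,n}$-torsor over the smooth geometrically connected stack $\mathcal{H}_{r,g,n}^{\text{far}}$, which a priori rules out any components of $\Delta_{r,g,n}$ lying outside the hypersurface cut out by $\Phi_{r,g,n}$ and reduces the problem to the irreducibility of the single defining polynomial.
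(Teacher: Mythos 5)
Your inductive skeleton (flat map to an integral base whose generic fiber is integral forces the total space, here a hypersurface, to be integral) is sound, and for $4\leq n\leq rd+1$ your plan is essentially the paper's own argument: flatness of $q_{n,n-1}$ via the explicit fibers, and integrality of the generic fiber $\{t^r=f(1,p)\}$ because the generic point of $\Delta_{r,g,n-1}$ corresponds to a form of degree $rd>2$ with exactly one double root, hence with a simple root. But for $1\leq n\leq 3$ there are two problems. A small one first: the fiber of $q_{n,n-1}$ is then finite of degree $r$, so it is \emph{never} geometrically integral; what you need (and what Capelli applied over $K=k(\Delta_{r,g,n-1})$ would give) is integrality over $K$ itself, i.e.\ irreducibility of $s^r-c_n$ over the function field, where $c_n$ is $a_{rd}$, $a_0$ or $f(1,1)$ according to $n$.

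The genuine gap is that this non-$\ell$-th-power statement \emph{is} the whole content of the proposition for $n\leq3$, and your plan for it --- exhibit a prime divisor of $\Delta_{r,g,n-1}$ along which $c_n$ has valuation exactly $1$, ``using the remaining freedom in the coefficients'' --- is a heuristic, not an argument, and it is not formal. For instance, for binary forms of degree $2$ the function $a_2$ restricted to the discriminant cone $\{a_1^2=4a_0a_2\}$ has divisor $2\cdot\{a_1=a_2=0\}$, so no valuation-one divisor exists at all (even though $a_2$ is still not a square there); so the existence of such a divisor genuinely uses $rd>2$, the description of the smooth locus of the discriminant, a transversality computation for $\{c_n=0\}$, and, for $n=2,3$, a further analysis of the covers $\Delta_{r,g,1},\Delta_{r,g,2}$, whose local structure is not that of the classical discriminant. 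None of this is supplied, and it is precisely the hard step. Note that the paper sidesteps it: for $n\leq3$ irreducibility is obtained by parametrizing $\Delta_{r,g,3}$ by the scheme $X_3$ of tuples $(f,g,s_1,s_2,s_3)$ with $h=f^2g$ and proving $X_3$ irreducible by a Gr\"obner-basis argument, while reducedness only needs the degree-$r$ cover $\Delta_{r,g,n}\rightarrow\Delta_{r,g,n-1}$ to be generically \'etale, i.e.\ that $c_n$ is a nonzerodivisor on the integral base --- much weaker than your ``not an $\ell$-th power''. Finally, your ``safety net'' is vacuous: $\Delta_{r,g,n}$ is by definition the vanishing scheme of $\Phi_{r,g,n}$, so it has no components ``outside the hypersurface'', and integrality of the open complement $\mathbb{A}^{rd-2+n}_{r,n,\text{sm}}$ gives no information about irreducibility of the boundary divisor.
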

					\begin{proof}
						We prove the statement by induction on $n$, with the case $n=0$ already done, and treating the cases $1\leq n\leq3$ and $4\leq n\leq rd+1$ separately.
						
						\textbf{Case $1\leq n\leq3$.}
						First we show that $\Delta_{r,g,n}$ is irreducible. Notice that a polynomial $h\in\mathbb{A}(rd)$ is in $\Delta_{r,g,0}$ if and only if, after extending the base field, there exist some polynomials $f,g$ of degree 1 and $rd-2$ respectively such that $h=f^2g$.
						Consider the affine scheme $X_3$ given by tuples $(f,g,s_1,s_2,s_3)\in\mathbb{A}(1)\times\mathbb{A}(rd-2)\times\mathbb{A}^3$ such that
						\[
							(f^2g)(0,1)=s_1^r,\quad (f^2g)(1,0)=s_2^r,\quad (f^2g)(1,1)=s_3^r.
						\]
						Write $f=ax+by$ and $g=\sum c_jx^{rd-2-j}y^j$. Then $X_3\subset\mathbb{A}^{rd+4}$ is defined by the equations
						\[
							b^2c_{rd-2}=s_1^r,\quad a^2c_0=s_2^r,\quad (a+b)^2(\sum c_j)=s_3^r
						\]
						which generate the ideal $I_3$ of $X_3$.
						In particular, the locus in $X_3$ where each $s_j$ is different from 0 is isomorphic to an open subscheme of $\mathbb{A}^{rd+1}$, hence it is irreducible. Now, suppose by contradiction that $X_3$ is not irreducible. From what we have noticed it follows that there exists a polynomial $f$ not in $I_3$ such that $fs_i\in I_3$ for some $i$, eventually passing to the algebraic closure of $k$. Notice that the generators of $I_3$ given above form a reduced Gr\"obner basis for $I_3$ with respect to the monomial ordering $s_3>s_2>s_1>c_{rd-2}>\ldots>c_0>b>a$. Therefore we may assume that $f$ has degree $\leq r-1$ in each $s_j$, and exactly $r-1$ in $s_i$. Writing $f=s_i^{r-1}f_1+f_2$ where $\deg_{s_i}f_2<r-1$, we get that $s_if=s_i^rf_1+s_if_2=f_3+s_if_2\in I_3$, where $\deg_{s_i}f_3=0$. Since necessarily $f_3+s_if_2=0$, we get a contradiction. It follows that $X_3$ is irreducible.
						
						Since the map $X_3\rightarrow\mathbb{A}_{r,3}^{rd+1}$ induced by sending $(f,g,s_1,s_2,s_3)$ in $(f^2g,s_1,s_2,s_3)$ has set-theoretic image $\Delta_{r,g,3}$ (up to extending the field), this is also irreducible. It follows immediately that $\Delta_{r,g,n}$ is irreducible for all $n\leq3$.
						
						Now, we show by induction that $\Delta_{r,g,n}$ is reduced. First, notice that when $1\leq n\leq3$ the map $q_{n,n-1}$ is a finite flat cover of degree $r$, and so is its base change $\Delta_{r,g,n}\rightarrow\Delta_{r,g,n-1}$.
						Assume that $\Delta_{r,g,n-1}$ is integral. Notice that $\Delta_{r,g,n}\rightarrow\Delta_{r,g,n-1}$ is also generically étale, since it ramifies over the locus where $a_{rd+1}$, $a_0$ or $\sum a_j$ vanishes, depending on $n$, and those sections are non-zero divisors in $\Delta_{r,g,n-1}$ since this is integral. Therefore, $\Delta_{r,g,n}$ is generically reduced, hence reduced, being an hypersurface.
						
						\textbf{Case $4\leq n\leq rd+1$.} Suppose we have already proved the statement for $n-1$, with $n\geq4$. Notice that the fiber of $q_{n,n-1}$ over a point with associated polynomial $f$ can be identified with a non-empty open subscheme of the curve in $\mathbb{A}^1\times\mathbb{A}^1$ given by the points $(p_{n-3},t_{n-3})$ which satisfy $f(1,p_{n-3})=t_{n-3}^r$. Therefore, by miracle flatness the map $q_{n,n-1}$ is flat. It follows that its base change $\Delta_{r,g,n}\rightarrow\Delta_{r,g,n-1}$ is flat, and in particular open. Now, it is easy to see that $t^r-f(1,p)$ is reducible as a polynomial in $p$ and $t$ only if every root of $f(1,p)$ in the algebraic closure of $k$ has multiplicity at least 2.
						
						We claim that the generic fiber is integral. Indeed, the affine plane curve defined by $t^r-f(1,p)$ is singular if and only if in the algebraic closure of $k$ there exists a root of $f(1,p)$ with multiplicity greater than 1 (recall that the characteristic of $k$ does not divide $r$).
						Since $\deg f=rd>2$ if $f$ is non-zero, it follows that the locus of $\Delta_{r,g,n-1}$ over which the fiber is integral is open and non-empty, hence dense. Notice that over that open substack $\Delta_{r,g,n}$ is integral, since there $q_{n,n-1}$ is a flat morphism with integral fibers and base. Since the complement maps on a proper closed subset of $\Delta_{r,g,n-1}$ and $q_{n,n-1}$ is open, there cannot be an irreducible component in that locus. It follows that $\Delta_{r,g,n}$ is irreducible. What we have done above shows also that $\Delta_{r,g,n}$ is generically reduced, hence reduced being an hypersurface.
					\end{proof}
			\section{Computation of $\mathrm{Pic}(\mathcal{H}_{r,g,n}^{\mathrm{far}})$ for $n\leq rd+1$}\label{sec:case<=rd+1}
			\subsection{Computation of $\mathrm{Pic}(\mathcal{H}_{r,g,1})$ and $\mathrm{Pic}(\mathcal{H}_{r,g,2}^{\mathrm{far}})$}\label{sec:case12}
			We use the description in Proposition~\ref{prop:presentationopenHgnfar23} to compute the Picard groups of $\mathcal{H}_{r,g,1}$ and $\mathcal{H}_{r,g,2}^{\text{far}}$. We will use the same technique as~\cite{AV04}. We mention that the case $(r,n)=(2,1)$ has been treated in~\cite{Per22}, where the integral Chow ring has been calculated. Finally, we will find a section of the restriction homomorphism $\mathrm{Pic}(\mathcal{H}_{r,g,2})\rightarrow\mathrm{Pic}(\mathcal{H}_{r,g,2}^{\text{far}})$. 
			
			Using the results in~\cite{EG98}, in~\cite{AV04} it is shown that if $G$ is an algebraic group over $k$, $V$ an $l$-dimensional representation of $G$, $X$ an open invariant subscheme of $V$, then the terms in the exact sequence
			\begin{equation}\label{eq:exactgeneralHgnfar}
				\begin{tikzcd}
					\mathrm{A}_{l-1}^G(V\setminus X)\arrow[r] & \mathrm{A}_G^1(V)\arrow[r] & \mathrm{A}_G^1(X)\arrow[r] & 0
				\end{tikzcd}
			\end{equation}
			satisfy $\mathrm{A}_G^1(V)\simeq\widehat{G}$, the group of characters of $G$, and $\mathrm{A}_G^1(X)\simeq\mathrm{Pic}_{\text{fun}}([X/G])$. See~\cite[Theorem 5.1]{AV04} for details.
			
			In our case, for $n=1$ and $n=2$, we have $G=G_{r,g,n}$, $V=\mathbb{A}_{r,n}^{rd+1}$ and $X=\mathbb{A}_{r,n,\text{sm}}^{rd+1}$, while $\mathrm{A}_{rd}^G(V\setminus X)$ is generated by the class of $\Delta_{r,g,n}$ thanks to Proposition~\ref{prop:integralityDeltagn}, so the exact sequence~\eqref{eq:exactgeneralHgnfar} reads
			\[
			\begin{tikzcd}
				{[\Delta_{r,g,n}]}\mathbb{Z}\arrow[r] & 	\mathrm{A}_{G_{r,g,n}}^1(\mathbb{A}_{r,n}^{rd+1})\simeq\widehat{G}_{r,g,n}\arrow[r] & \mathrm{Pic}(\mathcal{H}_{r,g,n}^{\text{far}})\arrow[r] & 0.
			\end{tikzcd}
			\]			
			Therefore, it is enough to compute $\widehat{G}_{r,g,n}$ and to study the action of $G_{r,g,n}$ on the ideal sheaf $I_{\Delta_{r,g,n}}\simeq\mathcal{O}_{\mathbb{A}_{r,n}^{rd+1}}$.
			\subsubsection{Computation of \/ $\mathrm{Pic}(\mathcal{H}_{r,g,1})$}\label{subsec:case1}
			We start with the case $n=1$. We remind the reader that in this case $\mathcal{H}_{r,g,1}^{\text{far}}$ is simply $\mathcal{H}_{r,g,1}$. Recall $G_{r,g,1}=\mathrm{B}_2/\mu_{d}$, where $\mu_{d}$ injects diagonally in the subgroup of lower triangular $2\times2$ matrices $B_2$. We have an exact sequence
			\[
			\begin{tikzcd}[row sep=small, column sep=small]
				0 \arrow[rr] & & \mathbb{G}_a\arrow[rr] & & \mathrm{B}_2\arrow[rr] & & \mathbb{G}_m\times\mathbb{G}_m\arrow[rr] & & 1\\
				& & b\arrow[rr,mapsto] & &
				\begin{bmatrix}
					1 & 0\\
					b & 1
				\end{bmatrix}
				\\
				& & & &
				\begin{bmatrix}
					a & 0\\
					b & c
				\end{bmatrix}
				\arrow[rr,mapsto] & & (a,c)
			\end{tikzcd}
			\]
			Moreover, there exists a section
			\[
			\mathbb{G}_m\times\mathbb{G}_m\ni(a,c)\mapsto
			\begin{bmatrix}
				a & 0\\
				0 & c\\
			\end{bmatrix}
			\in\mathrm{B}_2
			\]
			in particular $\mathrm{B}_2\simeq\mathbb{G}_a\rtimes(\mathbb{G}_m\times\mathbb{G}_m)$.
			It is well know that $\widehat{\mathbb{G}_a}=0$, hence the inclusion $\mathbb{G}_m\times\mathbb{G}_m\hookrightarrow\mathrm{B}_2$ induces an isomorphism
			\[
			\begin{tikzcd}
				\widehat{\mathrm{B}_2}\arrow[r,"\simeq"] & (\mathbb{G}_m\times\mathbb{G}_m)^\wedge\simeq\widehat{\mathbb{G}_m}\times\widehat{\mathbb{G}_m}\simeq\mathbb{Z}\oplus\mathbb{Z}.
			\end{tikzcd}
			\]
			The quotient by $\mu_{d}$ induces $\widehat{\mathrm{B}_2/\mu_{d}}\hookrightarrow\widehat{\mathrm{B}}_2$, with index $d$. Indeed, composing with the above isomorphism, it corresponds to the set of indices $(i,j)$ such that $d|(i+j)$.
			
			Now, we compute the class of $\Delta_{r,g,1}$ in $\widehat{G}_{r,g,1}$. Since it is invariant, there exist integers $m_1$ and $m_2$ such that, for all $(a,c)\in\mathbb{G}_m\times\mathbb{G}_m\subset\mathrm{B}_2$,
			\[
				\Phi_{r,g,1}((a,c)\cdot(a_0,\ldots,a_{rd-1},s_1))=a^{m_1}c^{m_2}\Phi_{r,g,1}(a_0,\ldots,a_{rd-1},s_1)
			\]
			and $m_1$, $m_2$ are the indices of $[\Delta_{r,g,1}]\mathbb{Z}$ in $\widehat{\mathbb{G}_m}\times\widehat{\mathbb{G}_m}$. On the other hand, if $A$ is the diagonal matrix with elements $a$ and $c$ respectively, by\cite[Lemma 5.4]{AV04} we have
			\begin{align*}
				\Phi_{r,g,1}((a,c)\cdot(a_0,\ldots,a_{rd-1},s_1))&=\Phi_{r,g,0}(A\cdot f(x,y))=(\det A)^{-rd(rd-1)}\Phi_{r,g,0}(f)\\
				&=a^{-rd(rd-1)}c^{-rd(rd-1)}\Phi_{r,g,1}(a_0,\ldots,a_{rd-1},s_1)
			\end{align*}
			hence $m_1=m_2=-rd(rd-1)$. We have proved the following Lemma.
			\begin{lemma}\label{lem:classDeltag1}
				The class of $\Delta_{r,g,1}$ in $\widehat{\mathbb{G}_m}\times\widehat{\mathbb{G}_m}\simeq\mathbb{Z}\oplus\mathbb{Z}$ is $(rd(rd-1),rd(rd-1))$.\qed
			\end{lemma}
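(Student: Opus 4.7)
My plan is to exploit the fact that $\Delta_{r,g,1}$ is an integral, $G_{r,g,1}$-invariant hypersurface in the affine space $\mathbb{A}_{r,1}^{rd+1}$, so its defining polynomial $\Phi_{r,g,1}$ must be a semi-invariant under the $\mathrm{B}_2$-action: there is a character $\chi$ of $\mathrm{B}_2$ with $g\cdot\Phi_{r,g,1}=\chi(g)\,\Phi_{r,g,1}$. Under the identification $\mathrm{A}^1_G(V)\simeq\widehat{G}$, the class of $\Delta_{r,g,1}$ is then $\chi^{-1}$ (the line bundle $\mathcal{O}(\Delta_{r,g,1})$ is generated, as a rational section, by $\Phi_{r,g,1}^{-1}$). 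Since $\widehat{\mathbb{G}_a}=0$ and $\mathrm{B}_2\simeq\mathbb{G}_a\rtimes(\mathbb{G}_m\times\mathbb{G}_m)$, the character $\chi$ factors through the diagonal torus, so it suffices to evaluate it on elements $(a,c)\in\mathbb{G}_m\times\mathbb{G}_m$.

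Next, I would reduce to the already-computed case $n=0$ using the explicit formula $\Phi_{r,g,1}(a_0,\ldots,a_{rd-1},s_1)=\Phi_{r,g,0}(a_0,\ldots,a_{rd-1},s_1^r)$ from Section~2.4. Under the action of the diagonal $A=\mathrm{diag}(a,c)$, the polynomial $f(x,y)=\sum a_i x^{rd-i}y^i$ transforms by $A\cdot f(x,y)=f(A^{-1}(x,y))$, i.e.\ $a_i\mapsto a^{-(rd-i)}c^{-i}a_i$, and $s_1\mapsto c^{-d}s_1$; in particular the substitution $a_{rd}\mapsto s_1^r$ is compatible with the $\mathbb{G}_m\times\mathbb{G}_m$-action on both sides, since $(s_1)^r\mapsto c^{-rd}s_1^r=a^{-(rd-rd)}c^{-rd}\cdot s_1^r$. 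Hence the character of $\Phi_{r,g,1}$ on the diagonal torus is equal to the character of $\Phi_{r,g,0}$ applied to $\mathrm{diag}(a,c)\in\mathrm{GL}_2$.

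The final input is the Arsie--Vistoli discriminant formula \cite[Lemma 5.4]{AV04}, which gives $\Phi_{r,g,0}(A\cdot f)=(\det A)^{-rd(rd-1)}\Phi_{r,g,0}(f)$ for all $A\in\mathrm{GL}_2$. Specializing to $A=\mathrm{diag}(a,c)$ with $\det A=ac$ yields the transformation factor $a^{-rd(rd-1)}c^{-rd(rd-1)}$. Therefore $m_1=m_2=-rd(rd-1)$ in the notation of the preceding paragraph, and after the sign flip coming from the passage from the semi-invariant to the line bundle class, the class of $\Delta_{r,g,1}$ in $\widehat{\mathbb{G}_m}\times\widehat{\mathbb{G}_m}\simeq\mathbb{Z}\oplus\mathbb{Z}$ is $(rd(rd-1),rd(rd-1))$.

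There is no real obstacle here beyond careful bookkeeping: the only nontrivial input is the $\mathrm{GL}_2$-equivariance of $\Phi_{r,g,0}$, which is imported from \cite{AV04}; the rest is tracking the compatibility of the torus action with the presentation of $\Phi_{r,g,1}$ via $\Phi_{r,g,0}$ and fixing the sign convention relating a semi-invariant polynomial to the class of its divisor.
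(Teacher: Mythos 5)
Your proposal is correct and follows essentially the same route as the paper: semi-invariance of $\Phi_{r,g,1}$, reduction to the diagonal torus via $\widehat{\mathbb{G}_a}=0$, the identity $\Phi_{r,g,1}(a_0,\ldots,a_{rd-1},s_1)=\Phi_{r,g,0}(a_0,\ldots,a_{rd-1},s_1^r)$, and the transformation law $\Phi_{r,g,0}(A\cdot f)=(\det A)^{-rd(rd-1)}\Phi_{r,g,0}(f)$ from~\cite[Lemma 5.4]{AV04}. Your explicit check that the substitution $a_{rd}\mapsto s_1^r$ is equivariant, and your remark on the sign convention (immaterial here since only the subgroup $[\Delta_{r,g,1}]\mathbb{Z}$ is used), are just slightly more detailed bookkeeping than the paper's own argument.
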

			Recall that the inclusion $\widehat{G}_{r,g,1}\subset\widehat{\mathbb{G}_m}\times\widehat{\mathbb{G}_m}$ has image the set of indices $(i,j)$ such that $d|(i+j)$. Via the map $(i,j)\mapsto(i,i+j)$, we can identify it with $\mathbb{Z}\oplus d\mathbb{Z}\subset\mathbb{Z}\oplus\mathbb{Z}$. We then have to compute the quotient of $\mathbb{Z}\oplus d\mathbb{Z}$ by the subgroup
			\[
				\{(rd(rd-1)a,2rd(rd-1)a)\ |a\in\mathbb{Z}\}.
			\]
			The above is clearly equivalent to computing the quotient of $\mathbb{Z}\oplus\mathbb{Z}$ by
			\[
				K=\{r(rd-1)a\cdot(d,2)\ |a\in\mathbb{Z}\}.
			\]
			We have two cases, depending on the parity of $d$.
			
			Suppose first $2|d$. Then, we can write the subgroup $K$ as
				\[
					K=\{2r(rd-1)a\cdot(d/2,1)\ |a\in\mathbb{Z}\}
				\]
				and consider
				\[
				\begin{tikzcd}[row sep=small, column sep=small]
					\mathbb{Z}\oplus\mathbb{Z}\arrow[rr] & & \mathbb{Z}\oplus\mathbb{Z}\\
					(a,b)\arrow[rr,mapsto] & & (a-(d/2)b,b)
				\end{tikzcd}
				\]
				given by the matrix
				\[
				\begin{bmatrix}
					1 & -d/2\\
					0 & 1
				\end{bmatrix}.
				\]
				Since it is invertible as a matrix with integral coefficients, the map is an isomorphism of groups, which sends $K$ to $0\oplus2r(rd-1)\mathbb{Z}$. It follows that
				\[
					\mathrm{Pic}(\mathcal{H}_{r,g,1})\simeq\mathbb{Z}\oplus\mathbb{Z}/2r(rd-1)\mathbb{Z}.
				\]
			
			Now, suppose $2|(d+1)$. Consider the map
				\[
				\begin{tikzcd}[row sep=small, column sep=small]
					\mathbb{Z}\oplus\mathbb{Z}\arrow[rr] & & \mathbb{Z}\oplus\mathbb{Z}\\
					(a,b)\arrow[rr,mapsto] & & (2a-db,-a+(\frac{d+1}{2})b)
				\end{tikzcd}
				\]
				given by the matrix
				\[
				\begin{bmatrix}
					2 & -d\\
					-1 & \frac{d+1}{2}
				\end{bmatrix}
				\]
				which is again invertible as a matrix with integral coefficients. It follows that the map is an isomorphisms of groups, sending $K$ to $0\oplus r(rd-1)\mathbb{Z}$. Therefore,
				\[
				\mathrm{Pic}(\mathcal{H}_{r,g,1})\simeq\mathbb{Z}\oplus\mathbb{Z}/r(rd-1)\mathbb{Z}.
				\]
								
			We have proved the following Theorem.
			\begin{theorem}\label{thm:picardHg1}
				Suppose the base field $k$ has characteristic non dividing $2rd$.
				\begin{itemize}
					\item If $d$ is even, then \/ $\mathrm{Pic}(\mathcal{H}_{r,g,1})\simeq\mathbb{Z}\oplus\mathbb{Z}/2r(rd-1)\mathbb{Z}$.
					\item If $d$ is odd, then \/ $\mathrm{Pic}(\mathcal{H}_{r,g,1})\simeq\mathbb{Z}\oplus\mathbb{Z}/r(rd-1)\mathbb{Z}$.\qed
				\end{itemize}
			\end{theorem}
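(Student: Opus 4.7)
The plan is to assemble the pieces already prepared in the text: Proposition~\ref{prop:presentationopenHgnfar23} realizes $\mathcal{H}_{r,g,1}$ as $[\mathbb{A}_{r,1,\text{sm}}^{rd+1}/G_{r,g,1}]$, and the equivariant-Chow exact sequence~\eqref{eq:exactgeneralHgnfar} (via~\cite{EG98, AV04}) together with Proposition~\ref{prop:integralityDeltagn} gives
\[
	[\Delta_{r,g,1}]\mathbb{Z} \longrightarrow \widehat{G}_{r,g,1} \longrightarrow \mathrm{Pic}(\mathcal{H}_{r,g,1}) \longrightarrow 0.
\]
So the task reduces to pure group theory: identify $\widehat{G}_{r,g,1}$ inside a rank-2 lattice, locate the image of $[\Delta_{r,g,1}]$, and compute the cokernel.

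First I would fix the identification $\widehat{G}_{r,g,1}\hookrightarrow\widehat{\mathbb{G}_m}\times\widehat{\mathbb{G}_m}\simeq\mathbb{Z}\oplus\mathbb{Z}$ already spelled out: from the semidirect product decomposition $\mathrm{B}_2\simeq\mathbb{G}_a\rtimes(\mathbb{G}_m\times\mathbb{G}_m)$ and $\widehat{\mathbb{G}_a}=0$ one gets $\widehat{\mathrm{B}_2}\simeq\mathbb{Z}\oplus\mathbb{Z}$, and the diagonal embedding of $\mu_d$ cuts out precisely the sublattice $\{(i,j):d\mid i+j\}$, which is of index $d$. Then Lemma~\ref{lem:classDeltag1} identifies the image of $[\Delta_{r,g,1}]$ with $\bigl(rd(rd-1),\,rd(rd-1)\bigr)$.

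Next I would perform the change of coordinates $(i,j)\mapsto(i,i+j)$, which is an automorphism of $\mathbb{Z}\oplus\mathbb{Z}$ (its matrix has determinant $1$) and sends $\widehat{G}_{r,g,1}$ onto $\mathbb{Z}\oplus d\mathbb{Z}$, while the image of $[\Delta_{r,g,1}]$ becomes $r(rd-1)(d,2)$. Computing $\mathrm{Pic}(\mathcal{H}_{r,g,1})$ is then equivalent to computing the cokernel of $\mathbb{Z}\xrightarrow{r(rd-1)(d,2)}\mathbb{Z}\oplus\mathbb{Z}$, and the answer depends only on $\gcd(d,2)$. If $d$ is even I would apply the unimodular transformation with matrix $\bigl[\begin{smallmatrix}1 & -d/2\\ 0 & 1\end{smallmatrix}\bigr]$, which sends the one-dimensional subgroup to $0\oplus 2r(rd-1)\mathbb{Z}$; if $d$ is odd I would apply $\bigl[\begin{smallmatrix}2 & -d\\ -1 & (d+1)/2\end{smallmatrix}\bigr]$ (unimodular because its determinant is $1$), sending it to $0\oplus r(rd-1)\mathbb{Z}$. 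Reading off the cokernels yields the two cases of the theorem.

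There is essentially no serious obstacle: all geometric input (irreducibility of $\Delta_{r,g,1}$, the presentation as a quotient stack, the character lattice, the weight of the discriminant) is already in hand. The only place where care is required is ensuring the two integer change-of-basis matrices used in the two parity cases are genuinely unimodular, so that they induce isomorphisms of $\mathbb{Z}\oplus\mathbb{Z}$ and hence faithfully compute the cokernel. Everything else is bookkeeping.
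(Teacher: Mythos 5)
Your proposal is correct and follows essentially the same route as the paper: the presentation of $\mathcal{H}_{r,g,1}$ as $[\mathbb{A}_{r,1,\text{sm}}^{rd+1}/(\mathrm{B}_2/\mu_d)]$, the equivariant exact sequence with the integral divisor $\Delta_{r,g,1}$, the identification $\widehat{G}_{r,g,1}\simeq\{(i,j):d\mid i+j\}$ and of the class of $\Delta_{r,g,1}$ via Lemma~\ref{lem:classDeltag1}, the coordinate change reducing to the cokernel of $r(rd-1)(d,2)$ in $\mathbb{Z}\oplus\mathbb{Z}$, and the same two unimodular matrices in the two parity cases. The bookkeeping (determinants and images of $K$) checks out, so there is nothing to add.
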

			\subsubsection{Computation of \/ $\mathrm{Pic}(\mathcal{H}_{r,g,2}^{\mathrm{far}})$}\label{subsec:case2}
			Let $n=2$. Notice that in the previous computation we reduced to study the action by $\mathbb{G}_m\times\mathbb{G}_m$ on $\mathbb{A}_{r,1}^{rd+1}$ instead of the one by $\mathrm{B}_2$. Since $G_{r,g,2}\simeq(\mathbb{G}_m\times\mathbb{G}_m)/\mu_{d}$ and we have a similar description of $\Phi_{r,g,2}$ in terms of $\Phi_{r,g,0}$, we can proceed in the exact same way as before. Hence, we get the same result, which proof is therefore omitted.
			\begin{proposition}\label{prop:picardHgnfar2}
				Suppose the base field $k$ has characteristic non dividing $2rd$.
				\begin{itemize}
					\item If $d$ is even, then \/ $\mathrm{Pic}(\mathcal{H}_{r,g,2}^{\text{far}})\simeq\mathbb{Z}\oplus\mathbb{Z}/2r(rd-1)\mathbb{Z}$.
					\item If $d$ is odd, then \/ $\mathrm{Pic}(\mathcal{H}_{r,g,2}^{\text{far}})\simeq\mathbb{Z}\oplus\mathbb{Z}/r(rd-1)\mathbb{Z}$.\qed
				\end{itemize}
			\end{proposition}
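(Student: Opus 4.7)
The plan is to run the argument of Theorem~\ref{thm:picardHg1} essentially verbatim in the case $n=2$. By Proposition~\ref{prop:presentationopenHgnfar23} we have $\mathcal{H}_{r,g,2}^{\text{far}}\simeq[\mathbb{A}_{r,2,\text{sm}}^{rd+1}/G_{r,g,2}]$ with $G_{r,g,2}=(\mathbb{G}_m\times\mathbb{G}_m)/\mu_{d}$, and $\Delta_{r,g,2}$ is an integral hypersurface by Proposition~\ref{prop:integralityDeltagn}, so the equivariant excision sequence~\ref{eq:exactgeneralHgnfar} becomes
\[
\begin{tikzcd}
[\Delta_{r,g,2}]\mathbb{Z}\arrow[r] & \widehat{G}_{r,g,2}\arrow[r] & \mathrm{Pic}(\mathcal{H}_{r,g,2}^{\text{far}})\arrow[r] & 0
\end{tikzcd}
\]
So I only need to identify the character group and the class of the discriminant inside it, and then compute the cokernel.

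For the character group, the same reasoning as in Section~\ref{subsec:case1} applies: $\widehat{\mathbb{G}_m\times\mathbb{G}_m}\simeq\mathbb{Z}\oplus\mathbb{Z}$, and since $\mu_{d}$ embeds diagonally, $\widehat{G}_{r,g,2}$ corresponds to the subgroup of pairs $(i,j)$ with $d\mid(i+j)$. For the class of $\Delta_{r,g,2}$, I would use the identity
\[
\Phi_{r,g,2}(a_1,\ldots,a_{rd-1},s_1,s_2)=\Phi_{r,g,0}(s_2^r,a_1,\ldots,a_{rd-1},s_1^r)
\]
together with the action formula from Corollary~\ref{cor:quotientclosedHgnfar2}, namely $(a,c)\cdot(f,s_1,s_2)=(f(a^{-1}x,c^{-1}y),c^{-d}s_1,a^{-d}s_2)$. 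Since the linear substitution $(x,y)\mapsto(a^{-1}x,c^{-1}y)$ is given by the diagonal matrix $A=\mathrm{diag}(a,c)$, \cite[Lemma 5.4]{AV04} yields
\[
\Phi_{r,g,0}(f(A^{-1}(x,y)))=(\det A)^{-rd(rd-1)}\Phi_{r,g,0}(f)=a^{-rd(rd-1)}c^{-rd(rd-1)}\Phi_{r,g,0}(f),
\]
and combining with the scaling of $s_1^r$ and $s_2^r$ (which are $a_{rd}$ and $a_0$ respectively) one checks this is consistent, so that $[\Delta_{r,g,2}]$ has coordinates $(rd(rd-1),rd(rd-1))$ in $\widehat{\mathbb{G}_m}\times\widehat{\mathbb{G}_m}$, exactly as in Lemma~\ref{lem:classDeltag1}.

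The rest of the proof is then pure group theory, formally identical to the end of Section~\ref{subsec:case1}: after the change of variables $(i,j)\mapsto(i,i+j)$, one identifies $\widehat{G}_{r,g,2}$ with $\mathbb{Z}\oplus d\mathbb{Z}\subset\mathbb{Z}\oplus\mathbb{Z}$ and must quotient by the subgroup generated by $r(rd-1)\cdot(d,2)$. Applying the same integral invertible changes of basis as in the $n=1$ case (the matrix $\bigl[\begin{smallmatrix}1&-d/2\\0&1\end{smallmatrix}\bigr]$ when $d$ is even, respectively $\bigl[\begin{smallmatrix}2&-d\\-1&(d+1)/2\end{smallmatrix}\bigr]$ when $d$ is odd) one concludes that the cokernel is $\mathbb{Z}\oplus\mathbb{Z}/2r(rd-1)\mathbb{Z}$ for $d$ even and $\mathbb{Z}\oplus\mathbb{Z}/r(rd-1)\mathbb{Z}$ for $d$ odd. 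There is no genuine obstacle here; the only point requiring care is the bookkeeping that the transformation of $\Phi_{r,g,2}$ truly produces the same weight $(rd(rd-1),rd(rd-1))$ as in the $n=1$ case, which is what allows the proof to be a direct transcription.
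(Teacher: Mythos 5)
Your proposal is correct and is essentially the paper's own argument: the paper simply states that the $n=2$ case proceeds exactly as the $n=1$ computation (same character group $\{(i,j):d\mid i+j\}$, same class $(rd(rd-1),rd(rd-1))$ for the discriminant via \cite[Lemma 5.4]{AV04}, same cokernel computation) and omits the details, which you have written out faithfully. No issues.
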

			Now, we have an exact sequence
			\[
				\begin{tikzcd}
					\oplus_{l=1}^{r-1}{[\mathcal{Z}_{r,g,2}^{1,2,l}]}\mathbb{Z}\arrow[r] & \mathrm{Pic}(\mathcal{H}_{r,g,2})\arrow[r] & \mathrm{Pic}(\mathcal{H}_{r,g,2}^\text{far})\arrow[r] & 0.
				\end{tikzcd}
			\]
			We want to find a section of the last map. We know that there exists a commutative diagram with $(\mathbb{G}_m\times\mathbb{G}_m)$-equivariant maps
			\[
			\begin{tikzcd}[row sep=normal, column sep=small]
				\mathbb{A}_{r,2,\text{sm}}(rd) & (f,s_1,s_2)\arrow[l,phantom,"\ni"]\arrow[rr,mapsto]\arrow[d,mapsto] & & (a_1,\ldots,a_{rd-1},s_1,s_2)\arrow[r,phantom,"\in"]\arrow[d,mapsto,"q_{2,1}"] & \mathbb{A}_{r,2,\text{sm}}^{rd+1}\\
				\mathbb{A}_{r,1,\text{sm}}(rd) & (f,s_1)\arrow[l,phantom,"\ni"]\arrow[rr,mapsto] & & (a_0=s_2^r,a_1\ldots,a_{rd-1},s_1)\arrow[r,phantom,"\in"] & \mathbb{A}_{r,1,\text{sm}}^{rd+1}.
			\end{tikzcd}
			\]
			Together with the fact that the computation of the Picard groups of $\mathcal{H}_{r,g,1}$ and $\mathcal{H}_{r,g,2}^{\text{far}}$ proceed in the exact same way, this gives a commutative diagram
				\begin{equation}\label{diag:isopic12far}
				\begin{tikzcd}
					{[\Delta_{r,g,1}]}\mathbb{Z}\arrow[r]\arrow[d,"\simeq"] & \widehat{G}_{r,g,1}\arrow[r]\arrow[d,"\simeq"] & \mathrm{Pic}(\mathcal{H}_{r,g,1})\arrow[r]\arrow[d] & 0\\
					{[\Delta_{r,g,2}]}\mathbb{Z}\arrow[r] & \widehat{G}_{r,g,2}\arrow[r] & \mathrm{Pic}(\mathcal{H}^{\mathrm{far}}_{r,g,2})\arrow[r] & 0\\
				\end{tikzcd}
			\end{equation}
			where $\mathrm{Pic}(\mathcal{H}_{r,g,1})\rightarrow\mathrm{Pic}(\mathcal{H}_{r,g,2}^{\mathrm{far}})$ comes from the map $\mathcal{H}_{r,g,2}^{\mathrm{far}}\rightarrow\mathcal{H}_{r,g,1}$ induced by forgetting the last section. It follows that the last map is also an isomorphism.
			\begin{lemma}\label{lem:comparisonpicardHg2farHg1}
				The composite
				\[
					\begin{tikzcd}[row sep=small, column sep=small]
						\mathcal{H}^{\mathrm{far}}_{r,g,2}\arrow[r,phantom,"\subset"] & \mathcal{H}_{r,g,2}\arrow[rr] & & \mathcal{H}_{r,g,1}
					\end{tikzcd}
				\]
				induces a commutative diagram
				\[
					\begin{tikzcd}[row sep=normal, column sep=small]
						\mathrm{Pic}(\mathcal{H}_{r,g,1})\arrow[rd]\arrow[rr,"\simeq"] & & \mathrm{Pic}(\mathcal{H}^{\mathrm{far}}_{r,g,2})\\
						& \mathrm{Pic}(\mathcal{H}_{r,g,2})\arrow[ur]
					\end{tikzcd}
				\]
				where the horizontal map is an isomorphism.
				In particular, the map
				\[
				\begin{tikzcd}
					\mathrm{Pic}(\mathcal{H}_{r,g,2})\arrow[r] & \mathrm{Pic}(\mathcal{H}_{r,g,2}^{\text{far}})
				\end{tikzcd}
				\]
				admits a section.\qed
			\end{lemma}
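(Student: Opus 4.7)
The triangle commutes by functoriality of the Picard group applied to the factorization of $\mathcal{H}^{\mathrm{far}}_{r,g,2}\to\mathcal{H}_{r,g,1}$ through $\mathcal{H}_{r,g,2}$, so there is nothing to prove there. The real content is the claim that the horizontal map, namely the pullback along the composite, is an isomorphism. My plan is to identify this pullback with the rightmost vertical arrow of diagram~\ref{diag:isopic12far}, which has already been shown to be an isomorphism by the five-lemma-type argument sketched just before the statement.

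To carry this out, I would first observe that the forgetful morphism $\mathcal{H}^{\mathrm{far}}_{r,g,2}\to\mathcal{H}_{r,g,1}$, read through the presentations of Corollaries~\ref{cor:quotientclosedHgnfar1} and~\ref{cor:quotientclosedHgnfar2} (or equivalently Proposition~\ref{prop:presentationopenHgnfar23}), corresponds to a morphism of quotient stacks $[\mathbb{A}_{r,2,\mathrm{sm}}^{rd+1}/G_{r,g,2}]\to[\mathbb{A}_{r,1,\mathrm{sm}}^{rd+1}/G_{r,g,1}]$ induced by the equivariant projection $q_{2,1}$ together with the inclusion of groups $G_{r,g,2}=(\mathbb{G}_m\times\mathbb{G}_m)/\mu_d\hookrightarrow B_2/\mu_d=G_{r,g,1}$ as diagonal matrices modulo $\mu_d$. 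This is precisely the data used to build the vertical maps in diagram~\ref{diag:isopic12far}: on characters the restriction along this inclusion is the middle vertical isomorphism (recall $\widehat{\mathbb{G}_a}=0$, which is exactly why the character groups agree), and on the leftmost column one has $q_{2,1}^{-1}(\Delta_{r,g,1})=\Delta_{r,g,2}$ by the very construction of $\Delta_{r,g,2}$ in Section~\ref{sec:geometricdescription}, so $[\Delta_{r,g,1}]$ pulls back to $[\Delta_{r,g,2}]$.

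With these two compatibilities in hand, the naturality of the excision sequence~\ref{eq:exactgeneralHgnfar} identifies the pullback $\mathrm{Pic}(\mathcal{H}_{r,g,1})\to\mathrm{Pic}(\mathcal{H}^{\mathrm{far}}_{r,g,2})$ induced by the forgetful composite with the rightmost vertical map of diagram~\ref{diag:isopic12far}. Since the two outer vertical arrows of that diagram are isomorphisms, so is the induced map on cokernels, proving that the horizontal arrow in our triangle is an isomorphism.

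Finally, the ``In particular'' is purely formal: denoting the two maps of the triangle by $f:\mathrm{Pic}(\mathcal{H}_{r,g,1})\to\mathrm{Pic}(\mathcal{H}_{r,g,2})$ and $g:\mathrm{Pic}(\mathcal{H}_{r,g,2})\to\mathrm{Pic}(\mathcal{H}^{\mathrm{far}}_{r,g,2})$, the composite $g\circ f$ being an isomorphism implies that $f\circ(g\circ f)^{-1}$ is a section of $g$. The only mild obstacle I anticipate is the careful bookkeeping in step one, making sure the equivariance data and the inclusion of groups really do fit together so that $q_{2,1}$ descends to the forgetful morphism as claimed; everything else is an immediate consequence of constructions already in place.
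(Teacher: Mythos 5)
Your proposal is correct and is essentially the paper's own argument: the paper likewise builds diagram~\ref{diag:isopic12far} from the $q_{2,1}$-equivariant comparison of the two presentations (group inclusion $(\mathbb{G}_m\times\mathbb{G}_m)/\mu_d\subset \mathrm{B}_2/\mu_d$ giving the character isomorphism, and $\Delta_{r,g,2}=q_{2,1}^{-1}(\Delta_{r,g,1})$ giving the left isomorphism), concludes that the induced map on cokernels $\mathrm{Pic}(\mathcal{H}_{r,g,1})\to\mathrm{Pic}(\mathcal{H}^{\mathrm{far}}_{r,g,2})$ is an isomorphism, and obtains the section formally. No substantive differences.
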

		\subsection{Computation of $\mathrm{Pic}(\mathcal{H}_{r,g,n}^{\mathrm{far}})$ for $3\leq n\leq rd+1$}\label{sec:3<=n<=2g+3}
			We compute the Picard group of $\mathcal{H}_{r,g,n}^{\text{far}}$ for $3\leq n\leq rd+1$ with the same method as in the previous section. However, in this case a section of $\mathrm{Pic}(\mathcal{H}_{r,g,n})\rightarrow\mathrm{Pic}(\mathcal{H}_{r,g,n}^{\text{far}})$ exists only when $d$ is even, as we will see in Section~\ref{sec:finalcomputation}.
			
			Recall that we have a presentation of $\mathcal{H}_{r,g,n}^{\text{far}}$ as a quotient stack of an open subscheme of a representation of $G_{r,g,n}\simeq\mathbb{G}_m/\mu_{d}$, see Proposition~\ref{prop:quotientopenHgnfar}. Moreover, $\widehat{G}_{r,g,n}\hookrightarrow\widehat{\mathbb{G}_m}\simeq\mathbb{Z}$ has image of index $d$. 
			
			Now, the complement of $\mathbb{A}^{rd-2+n}_{r,n}$ in $\mathbb{A}^{rd-2+n}$ is given by $\mathbb{A}^{rd+1-n}\times\widetilde{\Delta}\times\mathbb{A}^n$, and ${G}_{r,g,n}$ acts trivially on the ideal sheaves of its irreducible components. Therefore, from the exact sequence~\eqref{eq:exactgeneralHgnfar} with $V=\mathbb{A}^{rd-2+n}$ and $X=\mathbb{A}_{r,n}^{rd-2+n}$, we have that $\mathrm{A}_{G_{r,g,n}}^1(\mathbb{A}_{r,n}^{rd-2+n})\simeq\widehat{G}_{r,g,n}$.
			
			Now, by Proposition~\ref{prop:integralityDeltagn} we know that $\Delta_{r,g,n}$ is integral, hence by the usual exact sequence we get
			\begin{equation}\label{eq:exactHgnfar3<=n<=2g+3}
			\begin{tikzcd}
				{[\Delta_{r,g,n}]}\mathbb{Z}\arrow[r] & \mathrm{A}_{G_{r,g,n}}^1(\mathbb{A}_{r,n}^{rd-2+n})\simeq\widehat{G}_{r,g,n}\arrow[r] & \mathrm{Pic}(\mathcal{H}_{r,g,n}^{\text{far}})\arrow[r] & 0.
			\end{tikzcd}
			\end{equation}
			Now, $\Delta_{r,g,n}$ is the zero locus of the irreducible polynomial $\Phi_{r,g,n}$, which is of degree $2(rd-1)$ in the $a_i$ and $2r(rd-1)$ in $s_1,s_2,s_3,t_1,\ldots,t_{n-3}$, for $3\leq n\leq rd+1$. Moreover, for all $a\in\mathbb{G}_m$, by~\cite[Lemma 5.4]{AV04} and the description of the $(\mathbb{G}_m/\mu_{d})$-equivariant map $\Delta_{r,g,n}\rightarrow\Delta_{r,g,0}$, we have
			\begin{align*}
				&\Phi_{r,g,n}(a\cdot(a_1,\ldots,a_{rd+1-n},p_1,\ldots,p_{n-3},s_1,\ldots,t_{n-3}))\\
				&=\Phi_{r,g,0}(A\cdot f)=(\det A)^{-rd(rd-1)}\Phi_{r,g,0}(f)\\
				&=a^{-2rd(rd-1)}\Phi_{r,g,n}(a_1,\ldots,a_{rd+1-n},p_1,\ldots,p_{n-3},s_1,\ldots,t_{n-3})
			\end{align*}
			where $f$ is the polynomial corresponding to $(a_1,\ldots,t_{n-3})$ and $A$ is the diagonal matrix with coefficients $a$. This shows that the class of $\Delta_{r,g,n}$ in $\widehat{\mathbb{G}_m}$ is $2rd(rd-1)$, hence its class in $\widehat{G}_{r,g,n}\simeq\mathbb{Z}$ is $2r(rd-1)$. Using the exact sequence~\eqref{eq:exactHgnfar3<=n<=2g+3}, we have proved the following Proposition.
			\begin{proposition}\label{prop:picardHgnfar3<=n<=2g+3}
				Let $3\leq n\leq rd+1$. Then
				\[
					\mathrm{Pic}(\mathcal{H}_{r,g,n}^{\text{far}})\simeq\mathbb{Z}/2r(rd-1)\mathbb{Z}
				\]
				regardless of the parity of $d$.\qed
			\end{proposition}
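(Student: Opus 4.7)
The plan is to run the standard equivariant-Chow argument of~\cite{AV04,EG98}, exploiting the quotient presentation $\mathcal{H}_{r,g,n}^{\mathrm{far}}\simeq[\mathbb{A}_{r,n,\mathrm{sm}}^{rd-2+n}/G_{r,g,n}]$ of Proposition~\ref{prop:quotientopenHgnfar}. Concretely, I would apply the excision exact sequence
\[
\begin{tikzcd}
\mathrm{A}_{l-1}^{G_{r,g,n}}\!\bigl(\mathbb{A}^{rd-2+n}\setminus\mathbb{A}_{r,n,\mathrm{sm}}^{rd-2+n}\bigr)\arrow[r] & \widehat{G}_{r,g,n}\arrow[r] & \mathrm{Pic}(\mathcal{H}_{r,g,n}^{\mathrm{far}})\arrow[r] & 0
\end{tikzcd}
\]
obtained from~\ref{eq:exactgeneralHgnfar}, taking $V=\mathbb{A}^{rd-2+n}$ and $X=\mathbb{A}_{r,n,\mathrm{sm}}^{rd-2+n}$, and using $\mathrm{A}_{G_{r,g,n}}^1(V)\simeq\widehat{G}_{r,g,n}$ since $V$ is a representation.

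The first step is to analyze the complement. It decomposes as the union of $\Delta_{r,g,n}$ and the codimension-one components of $\mathbb{A}^{rd+1-n}\times\widetilde{\Delta}\times\mathbb{A}^n$ (the diagonals $\{p_i=p_j\}$ and the hyperplanes $\{p_i=0\},\{p_i=1\}$). Since $G_{r,g,n}\simeq\mathbb{G}_m/\mu_d$ acts trivially on each coordinate $p_i$, the ideal sheaves of these latter components carry the trivial character, so they map to $0$ in $\widehat{G}_{r,g,n}$ and can be discarded. By Proposition~\ref{prop:integralityDeltagn}, $\Delta_{r,g,n}=V(\Phi_{r,g,n})$ is integral, so the left-hand group is just $[\Delta_{r,g,n}]\mathbb{Z}$ and the exact sequence reduces to
\[
\begin{tikzcd}
[\Delta_{r,g,n}]\mathbb{Z}\arrow[r] & \widehat{G}_{r,g,n}\arrow[r] & \mathrm{Pic}(\mathcal{H}_{r,g,n}^{\mathrm{far}})\arrow[r] & 0.
\end{tikzcd}
\]

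Next I would identify the ambient group $\widehat{G}_{r,g,n}$: the inclusion $\mu_d\hookrightarrow\mathbb{G}_m$ is the $d$-th power map, so $\widehat{G}_{r,g,n}$ sits inside $\widehat{\mathbb{G}_m}\simeq\mathbb{Z}$ as the subgroup of multiples of $d$, giving $\widehat{G}_{r,g,n}\simeq\mathbb{Z}$ via the identification $d\mathbb{Z}\simeq\mathbb{Z}$. Then I would compute the class of $\Delta_{r,g,n}$ in this group. Using the explicit factorization of the defining polynomial through $\Phi_{r,g,0}$ given just before Proposition~\ref{prop:integralityDeltagn}, together with the Arsie--Vistoli weight computation (\cite[Lemma~5.4]{AV04}) that $\Phi_{r,g,0}(A\cdot f)=(\det A)^{-rd(rd-1)}\Phi_{r,g,0}(f)$, an element $a\in\mathbb{G}_m$ acting as the scalar matrix $\mathrm{diag}(a,a)$ scales $\Phi_{r,g,n}$ by $a^{-2rd(rd-1)}$. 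Hence $[\Delta_{r,g,n}]=-2rd(rd-1)$ in $\widehat{\mathbb{G}_m}\simeq\mathbb{Z}$, which under $d\mathbb{Z}\simeq\mathbb{Z}$ becomes $\pm 2r(rd-1)$.

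The conclusion is then immediate: $\mathrm{Pic}(\mathcal{H}_{r,g,n}^{\mathrm{far}})\simeq\mathbb{Z}/2r(rd-1)\mathbb{Z}$, with no dependence on the parity of $d$ (the parity issue that appeared in Section~\ref{subsec:case1} came from the two-character group $\widehat{\mathbb{G}_m\times\mathbb{G}_m}$ and does not arise here because $G_{r,g,n}$ has rank one). The only mildly delicate point is the bookkeeping that ensures no extra irreducible component of the complement contributes to the exact sequence; this is handled by the triviality of the $\mathbb{G}_m$-action on the $p_i$-coordinates together with the integrality of $\Delta_{r,g,n}$ already established, so the argument is essentially a direct verification once those ingredients are in place.
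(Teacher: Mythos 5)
Your proposal is correct and follows essentially the same route as the paper: the quotient presentation of Proposition~\ref{prop:quotientopenHgnfar}, the equivariant excision sequence~\ref{eq:exactgeneralHgnfar} with the components of $\mathbb{A}^{rd+1-n}\times\widetilde{\Delta}\times\mathbb{A}^n$ discarded because $G_{r,g,n}$ acts trivially on the $p_i$, the integrality of $\Delta_{r,g,n}$ from Proposition~\ref{prop:integralityDeltagn}, and the weight computation via~\cite[Lemma~5.4]{AV04} giving the class $\pm2r(rd-1)$ in $\widehat{G}_{r,g,n}\simeq\mathbb{Z}$. The only cosmetic difference is that you excise the whole complement in one step, whereas the paper first passes to $\mathbb{A}_{r,n}^{rd-2+n}$ and then removes $\Delta_{r,g,n}$; the conclusion is unaffected.
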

			\begin{remark}\label{rmk:uniformity3<=n<=rd+1}
				Notice that the way we have computed the Picard group of $\mathcal{H}_{r,g,n}^{\text{far}}$ for $3\leq n\leq rd+1$ shows that
				\[
					\begin{tikzcd}
						\mathrm{Pic}(\mathcal{H}_{r,g,n-1}^{\text{far}})\arrow[r] & \mathrm{Pic}(\mathcal{H}_{r,g,n}^{\text{far}})
					\end{tikzcd}
				\]
				is an isomorphism for $4\leq n\leq rd+1$. In particular,
				\[
				\begin{tikzcd}
					\mathrm{Pic}(\mathcal{H}_{r,g,3}^{\text{far}})\arrow[r] & \mathrm{Pic}(\mathcal{H}_{r,g,n}^{\text{far}})
				\end{tikzcd}
				\]
				is an isomorphism for $3\leq n\leq rd+1$.
			\end{remark}
			We could have computed the Picard group of $\mathcal{H}_{r,g,n}^{\text{far}}$ in a slightly different way, which we show now for the sake of completeness. First, we rewrite Proposition~\ref{prop:quotientopenHgnfar} as follows. Notice that $G_{r,g,n}=\mathbb{G}_m/\mu_{d}$ acts on $\mathbb{A}_{r,n,\text{sm}}^{rd-2+n}$ trivially on the components $p_i$, by the formula $a\cdot a_i=a^{-rd}a_i$ on the components $a_i$, and by the formula $a\cdot s=a^{-d}s$ on the components $s_1,s_2,s_3,t_1,\ldots,t_{n-3}$. This action factors through the isomorphism
			\[
			\begin{tikzcd}[row sep=small, column sep=normal]
				\mathbb{G}_m/\mu_{d}\arrow[r,"\simeq"] & \mathbb{G}_m
			\end{tikzcd}
			\]
			defined by $\overline{a}\mapsto a^{-d}$.
			Therefore, we can rewrite Proposition~\ref{prop:quotientopenHgnfar} as follows.
			\begin{corollary}\label{cor:representationweightedprojectivestack3<=n<=2g+3}
				Let $3\leq n\leq rd+1$. Then
				\[
				\mathcal{H}_{r,g,n}^{\text{far}}\simeq((\mathbb{A}^{n-3}\setminus\widetilde{\Delta})\times\mathcal{P}(r^{rd+1-n},1^n))\setminus\overline{\Delta}_{r,g,n}
				\]
				where $\mathcal{P}(r^{rd+1-n},1^n)$ is the weighted projective stack where the first $rd+1-n$ variables have degree $r$ while the last $n$ have degree 1, and $\overline{\Delta}_{r,g,n}$ is the image of $\Delta_{r,g,n}$ under the quotient. In particular, for $n=rd+1$
				\[
					\mathcal{H}_{r,g,rd+1}^{\text{far}}\simeq((\mathbb{A}^{rd-2}\setminus\widetilde{\Delta})\times\mathbb{P}^{rd})\setminus\overline{\Delta}_{r,g,rd+1}
				\]
				and it is a scheme.\qed
			\end{corollary}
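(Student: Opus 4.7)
The plan is to start from the presentation $\mathcal{H}_{r,g,n}^{\text{far}}\simeq[\mathbb{A}_{r,n,\text{sm}}^{rd-2+n}/G_{r,g,n}]$ of Proposition~\ref{prop:quotientopenHgnfar}, and then unwind the $G_{r,g,n}$-action so as to separate out the trivial factor carrying the $p_i$ and reinterpret the remaining quotient as a weighted projective stack.

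First I would use the product decomposition $\mathbb{A}_{r,n}^{rd-2+n}=\mathbb{A}^{rd+1-n}\times(\mathbb{A}^{n-3}\setminus\widetilde{\Delta})\times\mathbb{A}^n$ from Section~\ref{subsec:secondpresentation}. Since by construction $G_{r,g,n}=\mathbb{G}_m/\mu_d$ acts trivially on the $p_i$-coordinates and non-trivially only on the $a_i$ (weight $-rd$) and on $s_1,s_2,s_3,t_1,\dots,t_{n-3}$ (weight $-d$), the quotient stack splits as
\[
[\mathbb{A}_{r,n,\text{sm}}^{rd-2+n}/G_{r,g,n}]\simeq (\mathbb{A}^{n-3}\setminus\widetilde{\Delta})\times\bigl[(\mathbb{A}^{rd+1-n}\times\mathbb{A}^n)_{\text{sm}}/G_{r,g,n}\bigr],
\]
where the subscript $\text{sm}$ indicates restriction to the open locus corresponding to $\mathbb{A}_{r,n,\text{sm}}^{rd-2+n}$, and the product decomposition respects the smoothness locus because deleting $\widetilde{\Delta}$ is independent of the remaining coordinates.

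Next I would identify the non-trivial factor. Precomposing the $G_{r,g,n}$-action with the isomorphism $\mathbb{G}_m\xrightarrow{\sim}\mathbb{G}_m/\mu_d$, $a\mapsto\overline{a^{-1/d}}$, or equivalently using the isomorphism $\mathbb{G}_m/\mu_d\xrightarrow{\sim}\mathbb{G}_m$ described by $\overline{a}\mapsto a^{-d}$ in the excerpt, converts the weights $-rd$ and $-d$ into $r$ and $1$ respectively. The resulting $\mathbb{G}_m$-action on $\mathbb{A}^{rd+1-n}\times\mathbb{A}^n$ has strictly positive weights, so by definition its quotient stack (restricted to the open locus where not all coordinates vanish simultaneously, which is automatic on the $\text{sm}$-locus since the $s_i$ are constrained by $s_2^r=a_0\ne0$ etc. on $\mathbb{A}_{r,n,\text{sm}}$) is exactly the weighted projective stack $\mathcal{P}(r,\dots,r,1,\dots,1)$ with $rd+1-n$ weights $r$ and $n$ weights $1$. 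Deleting $\overline{\Delta}_{r,g,n}$, the image of the invariant closed subscheme $\Delta_{r,g,n}$, accounts for imposing smoothness of the cyclic cover.

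The final claim, that for $n=rd+1$ the stack is a scheme, is then immediate: in that case $rd+1-n=0$, so there are no weight-$r$ coordinates at all and $\mathcal{P}(r,\dots,r,1,\dots,1)=\mathcal{P}(1,\dots,1)=\mathbb{P}^{rd}$, a genuine projective space. Hence $(\mathbb{A}^{rd-2}\setminus\widetilde{\Delta})\times\mathbb{P}^{rd}$ is a scheme, and removing the closed subscheme $\overline{\Delta}_{r,g,rd+1}$ keeps it a scheme. I do not expect any substantial obstacle: the content is essentially a change-of-variable in the acting group plus the observation that $\mathbb{G}_m$-quotients with positive weights yield weighted projective stacks; the only care needed is to check that the open smoothness locus is compatible with the product decomposition and that $\Delta_{r,g,n}$ is $G_{r,g,n}$-invariant, both of which follow from the constructions in Section~\ref{subsec:secondpresentation} and from Proposition~\ref{prop:integralityDeltagn}.
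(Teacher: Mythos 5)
Your overall route is the same as the paper's: start from Proposition~\ref{prop:quotientopenHgnfar}, observe that the $G_{r,g,n}$-action is trivial on the $p_i$ and factors through $\mathbb{G}_m/\mu_d\simeq\mathbb{G}_m$, $\overline{a}\mapsto a^{-d}$, turning the weights $-rd,-d$ into $r,1$, and then recognize the quotient as an open substack of $(\mathbb{A}^{n-3}\setminus\widetilde{\Delta})\times\mathcal{P}(r,\ldots,r,1,\ldots,1)$. However, two of your intermediate justifications are wrong as written. First, the displayed splitting $[\mathbb{A}_{r,n,\text{sm}}^{rd-2+n}/G_{r,g,n}]\simeq(\mathbb{A}^{n-3}\setminus\widetilde{\Delta})\times[(\mathbb{A}^{rd+1-n}\times\mathbb{A}^n)_{\text{sm}}/G_{r,g,n}]$ does not hold: the smoothness condition is $\Phi_{r,g,n}\neq0$, and $\Phi_{r,g,n}$ involves the $p_i$ through the $\varphi^{(n)}_i$, so the locus $\mathbb{A}_{r,n,\text{sm}}^{rd-2+n}$ is not a product and the bracketed factor is not even well defined on its own. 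The correct order of operations (which your final sentence in effect uses, contradicting the display) is: remove from $\mathbb{A}_{r,n}^{rd-2+n}$ only the locus $Z$ where all the weighted coordinates $a_i,s_j,t_j$ vanish, identify $[(\mathbb{A}_{r,n}^{rd-2+n}\setminus Z)/\mathbb{G}_m]$ with $(\mathbb{A}^{n-3}\setminus\widetilde{\Delta})\times\mathcal{P}(r,\ldots,r,1,\ldots,1)$, and only then delete the image $\overline{\Delta}_{r,g,n}$ of the invariant hypersurface $\Delta_{r,g,n}$; for this to give the statement one needs $Z\subset\Delta_{r,g,n}$.

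Second, your reason for why the irrelevant locus is avoided --- ``automatic on the sm-locus since $s_2^r=a_0\neq0$ etc.'' --- is false: nothing in the definition of $\mathbb{A}_{r,n,\text{sm}}(rd)$ forbids a marked point from lying over a branch point, so $s_2$ (hence $a_0$) can perfectly well vanish on the smooth locus. The correct argument is the inclusion $Z\subset\Delta_{r,g,n}$ itself: if all of $a_1,\ldots,a_{rd+1-n},s_1,s_2,s_3,t_1,\ldots,t_{n-3}$ vanish, then $a_0=s_2^r=0$, $a_{rd}=s_1^r=0$, and by the homogeneity property \textbf{2} of Lemma~\ref{lem:rationality} every $\varphi^{(n)}_i$ vanishes as well, so the reconstructed binary form $f$ is identically zero and the point lies in $\Delta_{r,g,n}$ (it is certainly not in the smooth locus). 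With these two repairs your argument coincides with the paper's; the $n=rd+1$ conclusion (no weight-$r$ variables, $\mathcal{P}(1,\ldots,1)=\mathbb{P}^{rd}$, hence a scheme) is fine as you state it.
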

			We can use Corollary~\ref{cor:representationweightedprojectivestack3<=n<=2g+3} to compute the Picard group of $\mathcal{H}_{r,g,n}^{\text{far}}$ for $3\leq n\leq rd+1$. Indeed, set $V_{r,g,n}:=\widetilde{\Delta}\times\mathcal{P}(r^{rd+1-n},1^n)\ \cup\overline{\Delta}_{r,g,n}$, we have an exact sequence of integral Chow groups
				\[
				\begin{tikzcd}[column sep=18pt]
					\mathrm{CH}_*(V_{r,g,n})\arrow[r] & \mathrm{CH}_*(\mathbb{A}^{n-3}\times\mathcal{P}(r^{rd+1-n},1^n))\arrow[r] &\mathrm{CH}_*(\mathcal{H}_{r,g,n}^{\text{far}}) \arrow[r] & 0.
				\end{tikzcd}
				\]
				Now, $\mathrm{CH}_{*+n-3}(\mathbb{A}^{n-3}\times\mathcal{P}(r^{rd+1-n},1^n))\simeq\mathrm{CH}_*(\mathcal{P}(r^{rd+1-n},1^n))$
				hence taking the component in codimension 1 we get
				\[
				\begin{tikzcd}
					{[\overline{\Delta}_{r,g,n}]}\mathbb{Z}\arrow[r] & \mathbb{Z}\arrow[r] & \mathrm{Pic}(\mathcal{H}_{r,g,n}^{\text{far}}) \arrow[r] & 0
				\end{tikzcd}
				\]
				since the image of the class of $\widetilde{\Delta}\times\mathcal{P}(r^{rd+1-n},1^n)$ is trivial.
				Using the fact that the degree of $\Phi_{r,g,n}$ is $2r(rd-1)$, we get again that
				\[
				\mathrm{Pic}(\mathcal{H}_{r,g,n}^{\text{far}})\simeq\mathbb{Z}/2r(rd-1)\mathbb{Z}
				\]
				for $3\leq n\leq rd+1$. The first approach has the advantage of allowing us to compare $\mathrm{Pic}(\mathcal{H}_{r,g,n})$ with $\mathrm{Pic}(\mathcal{H}_{r,g})$, which we will use in Section~\ref{sec:finalcomputation} to reconstruct $\mathrm{Pic}(\mathcal{H}_{r,g,n})$ from the exact sequence~\eqref{eq:exactmain}.
		\section{Computation of $\mathrm{Pic}(\mathcal{H}_{r,g,n}^{\mathrm{far}})$ for $n\geq rd+1$}\label{sec:>=2g+3}
			For $n\geq3$, we know how to describe $\mathcal{H}_{r,g,n}^{\text{far}}$ as a quotient of a locally closed subscheme of an affine space by an action of $\mathbb{G}_m/\mu_{d}$, see Corollary~\ref{cor:quotientclosedHgnfar}. Moreover, for $n\leq rd+1$ we found another presentation of $\mathcal{H}_{r,g,n}^{\text{far}}$ as a quotient stack of an open subscheme of a presentation of $\mathbb{G}_m/\mu_{d}$, see Proposition~\ref{prop:quotientopenHgnfar}. In the case of $n>rd+1$ this is no longer doable. However, already for $n\geq rd+1$, we are dealing just with schemes, as we will see in moment.
			
			Precisely, for $n=rd+1$, we have
			\[
				\mathcal{H}_{r,g,rd+1}^{\text{far}}\simeq((\mathbb{A}^{rd-2}\setminus\widetilde{\Delta})\times\mathbb{P}^{rd})\setminus\overline{\Delta}_{r,g,rd+1}
			\]
			where $\overline{\Delta}_{r,g,rd+1}$ is the image of $\Delta_{r,g,rd+1}$, so again $\overline{\Delta}_{r,g,rd+1}=\mathrm{V}(\Phi_{r,g,rd+1})$, see Corollary~\ref{cor:representationweightedprojectivestack3<=n<=2g+3}.
			Recall we got this presentation using the open immersion $h_{rd+1}$, see Lemma~\ref{lem:rationality}. For $n=rd+1+m>rd+1$ we can still use $h_{rd+1}$ to get a mix of the two types of presentations, applying the identity on the other components. Precisely, we get that
			\[
				\mathcal{H}_{r,g,rd+1+m}^{\text{far}}=\left[\widetilde{\mathcal{H}}_{r,g,rd+1+m}^{\text{far}}/(\mathbb{G}_m/\mu_{d})\right]
			\]
			where $\widetilde{\mathcal{H}}_{r,g,rd+1+m}^{\text{far}}$ is isomorphic to the closed subscheme of
			\[
				((\mathbb{A}^{rd-2+m}\setminus\widetilde{\Delta})\times\mathbb{A}^{rd+1+m})\setminus\mathrm{V}(\Phi_{r,g,rd+1})
			\]
			defined by $m$ polynomials $f_i:=t_i^r-f(1,p_i)$ for $rd-1\leq i\leq rd+m-2$, with the usual notation, each of which is homogeneous of degree $r$ in the variables $s_1,s_2,s_3,t_1,\ldots,t_{rd-2},t_i$. Again, the action of $\mathbb{G}_m/\mu_{d}$ is of degree $-d$ on the second set of variables and does not affect the others, hence $\mathcal{H}_{r,g,rd+1+m}^{\text{far}}$ is the closed subvariety of
			\[
				((\mathbb{A}^{rd-2+m}\setminus\widetilde{\Delta})\times\mathbb{P}^{rd+m})\setminus\mathrm{V}(\Phi_{r,g,rd+1})
			\]
			defined by the same polynomials. Precisely, we have the following.
			\begin{proposition}\label{prop:thirdpresentation}
			Let $n\geq rd+1$, and $m=n-rd-1$. Then $\mathcal{H}_{r,g,n}^{\text{far}}$ is the closed subscheme of
			\[
				((\mathbb{A}^{n-3}\setminus\widetilde{\Delta})\times\mathbb{P}^{n-1})\setminus\mathrm{V}(\Phi_{r,g,rd+1})
			\]
			defined by $m$ polynomials
			\[
			f_i(p_1,\ldots,p_{rd-2},p_i,s_1,s_2,s_3,t_1,\ldots,t_{rd-2},t_i):=t_i^r-f(1,p_i)
			\]
			for $i\in\{rd-1,\ldots,n-3\}$, where $f=\sum_{i=0}^{rd}a_ix^{rd-i}y^i$ with $a_0=s_2^r$, $a_{rd}=s_1^r$, and
			\[
			a_i=\varphi^{(rd+1)}_{rd-1-i}(p_1,\ldots,p_{rd-2},s_1,s_2,s_3,t_1,\ldots,t_{rd-2})
			\]
			for \/ $1\leq i\leq rd-1$.\qed
			\end{proposition}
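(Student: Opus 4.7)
The plan is to combine the general quotient presentation of $\mathcal{H}_{r,g,n}^{\text{far}}$ given in Corollary~\ref{cor:quotientclosedHgnfar} with the partial inversion performed by the map $h_{rd+1}$ of Lemma~\ref{lem:rationality}, and then translate the resulting $\mathbb{G}_m/\mu_{d}$-quotient into a projective closed subvariety exactly as was done for $n=rd+1$ in Corollary~\ref{cor:representationweightedprojectivestack3<=n<=2g+3}.

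More precisely, write $n=rd+1+m$ with $m\geq 0$. I start from $\mathcal{H}_{r,g,n}^{\text{far}}=[\mathbb{A}_{r,n,\mathrm{sm}}(rd)/(\mathbb{G}_m/\mu_{d})]$. In the first $rd-2$ section equations $f(1,p_j)=t_j^r$ for $j=1,\ldots,rd-2$, Lemma~\ref{lem:rationality} tells me the coefficients $a_1,\ldots,a_{rd-1}$ of $f$ can be uniquely recovered from $(p_1,\ldots,p_{rd-2},s_1,s_2,s_3,t_1,\ldots,t_{rd-2})$ via the formulas $a_i=\varphi^{(rd+1)}_{rd-1-i}$, producing a $(\mathbb{G}_m/\mu_{d})$-equivariant open immersion onto $\mathbb{A}_{r,rd+1}^{2(rd-1)+1}$. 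Extending this identification by the identity on the remaining $2m$ coordinates $p_{rd-1},\ldots,p_{n-3},t_{rd-1},\ldots,t_{n-3}$, I obtain a $(\mathbb{G}_m/\mu_{d})$-equivariant locally closed embedding of $\mathbb{A}_{r,n,\mathrm{sm}}(rd)$ into $(\mathbb{A}^{n-3}\setminus\widetilde{\Delta})\times\mathbb{A}^{rd+2+m}$, where the image is cut out by exactly the remaining $m$ equations
\[
f_i=t_i^{r}-f(1,p_i)=0,\qquad i\in\{rd+2,\ldots,n\},
\]
with the $a_i$ substituted by the $\varphi$-expressions of Lemma~\ref{lem:rationality}, intersected with the smoothness open subset, i.e.\ the complement of $\mathrm{V}(\Phi_{r,g,rd+1})$.

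Next I take the quotient by $\mathbb{G}_m/\mu_{d}$. The action, after substituting the $a_i$, becomes trivial on the $p_j$'s and scales each of $s_1,s_2,s_3,t_1,\ldots,t_{n-3}$ by $a^{-d}$; exactly as in the discussion preceding Corollary~\ref{cor:representationweightedprojectivestack3<=n<=2g+3}, the isomorphism $\mathbb{G}_m/\mu_{d}\xrightarrow{\sim}\mathbb{G}_m$, $\overline a\mapsto a^{-d}$, turns this into the standard weight-one $\mathbb{G}_m$-action on the $n=rd+2+m$ coordinates $s_1,s_2,s_3,t_1,\ldots,t_{n-3}$. Hence the quotient identifies the ambient space with $(\mathbb{A}^{n-3}\setminus\widetilde\Delta)\times\mathbb{P}^{n-1}$, and $\Phi_{r,g,rd+1}$, which is already homogeneous in those variables (having been computed from $\Phi_{r,g,0}$ via the substitutions of Lemma~\ref{lem:rationality}, which are weighted-homogeneous of degree $r$ in the variables involved), descends to a well-defined closed subscheme $\mathrm{V}(\Phi_{r,g,rd+1})\subset(\mathbb{A}^{n-3}\setminus\widetilde\Delta)\times\mathbb{P}^{n-1}$.

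It remains to check that each $f_i$ is also homogeneous of degree $r$ in the projective coordinates $s_1,s_2,s_3,t_1,\ldots,t_{n-3}$, so that its vanishing locus descends to a closed subscheme of the product. The term $t_i^r$ clearly has weight $r$, and by part~\textbf{2} of Lemma~\ref{lem:rationality} every $\varphi^{(rd+1)}_j$ is homogeneous of degree $r$ in $s_1,s_2,s_3,t_1,\ldots,t_{rd-2}$, so substituting into $f(1,p_i)=s_2^{r}+\sum_i a_i p_i^{\,i}+s_1^{r}p_i^{rd}$ yields an expression of degree $r$. Thus $f_i$ descends, and $\mathcal{H}_{r,g,n}^{\text{far}}$ is exactly the closed subscheme of $((\mathbb{A}^{n-3}\setminus\widetilde\Delta)\times\mathbb{P}^{n-1})\setminus\mathrm{V}(\Phi_{r,g,rd+1})$ cut out by $f_{rd+2},\ldots,f_n$. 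The main (minor) obstacle is bookkeeping: making sure that substituting the $\varphi^{(rd+1)}_j$ and then identifying $\mathbb{G}_m/\mu_d$ with $\mathbb{G}_m$ really produces the stated projective weights without introducing any additional weighted structure; this is essentially the same verification already carried out for the case $n=rd+1$ in Corollary~\ref{cor:representationweightedprojectivestack3<=n<=2g+3}, and the extension is formal.
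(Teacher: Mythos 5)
Your argument is correct and is essentially the paper's own proof: the Proposition is established in the discussion preceding it by exactly this route, namely applying the partial inversion $h_{rd+1}$ of Lemma~\ref{lem:rationality} (extended by the identity on the extra $2m$ coordinates) to the presentation of Corollary~\ref{cor:quotientclosedHgnfar}, keeping the remaining $m$ equations $f_i=t_i^r-f(1,p_i)$, and using their degree-$r$ homogeneity in $s_1,s_2,s_3,t_1,\ldots,t_{n-3}$ to descend along the weight-one $\mathbb{G}_m$-quotient to $(\mathbb{A}^{n-3}\setminus\widetilde{\Delta})\times\mathbb{P}^{n-1}$. Only note the small miscount: the affine second factor has $n=rd+1+m$ coordinates (so $\mathbb{A}^{rd+1+m}$, not $\mathbb{A}^{rd+2+m}$), which is consistent with the quotient being $\mathbb{P}^{n-1}$ as you conclude.
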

			We use this description to compute the Picard group of $\mathcal{H}_{r,g,n}^{\text{far}}$ for $n>rd+1$.
			We will need the following two lemmas.
			\begin{lemma}\label{lem:smoothnessforgettingcurve}
				For all $n\geq0$, consider the map
				\[
					\begin{tikzcd}
						\mathcal{H}_{r,g,n}^{\text{far}}\arrow[r] & \mathcal{M}_{0,n}
					\end{tikzcd}
				\]
				which forgets the smooth curve and considers just the sections to the Brauer-Severi scheme. Then, that map is smooth with geometrically connected fibers.
			\end{lemma}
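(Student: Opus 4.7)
The plan is to check smoothness and geometric connectedness of fibers at the level of the atlas $\mathbb{A}_{r,n,\text{sm}}(rd)\to\mathcal{H}_{r,g,n}^{\text{far}}$ provided by the presentations already established, and then descend. The cleanest case is $n\geq 3$: $\mathcal{M}_{0,n}$ is then already the scheme $\mathbb{A}^{n-3}\setminus\widetilde{\Delta}$ (the rigidification that fixes three sections at $\infty,0,[1{:}1]$), and by Corollary~\ref{cor:quotientclosedHgnfar} the composite $\mathbb{A}_{r,n,\text{sm}}(rd)\to\mathcal{H}_{r,g,n}^{\text{far}}\to\mathcal{M}_{0,n}$ is exactly the projection $(f,p_\bullet,s_\bullet,t_\bullet)\mapsto(p_1,\ldots,p_{n-3})$. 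Since $\mathbb{A}_{r,n,\text{sm}}(rd)\to\mathcal{H}_{r,g,n}^{\text{far}}$ is a torsor under the smooth connected group $\mathbb{G}_m/\mu_d$, smoothness and geometric connectedness of fibers of the stack map are equivalent to the same properties for this projection of schemes. The cases $n=0,1,2$ are handled analogously by pulling back along the obvious geometric point of $\mathcal{M}_{0,n}$ and using Corollaries~\ref{cor:quotientclosedHgnfar1} and~\ref{cor:quotientclosedHgnfar2}, reducing everything to showing that $\mathbb{A}_{r,n,\text{sm}}(rd)$ itself is smooth and geometrically irreducible over $k$.

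For smoothness of the projection I would compute the Jacobian of the $n$ equations $s_v^r=f(q_v)$ cutting out $\mathbb{A}_{r,n,\text{sm}}(rd)$ inside $\mathbb{A}_{\text{sm}}(rd)\times(\mathbb{A}^{n-3}\setminus\widetilde{\Delta})\times\mathbb{A}^n$, relatively over $\mathbb{A}^{n-3}\setminus\widetilde{\Delta}$, with $q_1,\ldots,q_n$ ranging over $\infty,0,[1{:}1],[1{:}p_1],\ldots,[1{:}p_{n-3}]$. The $\partial/\partial s_v$-columns form a diagonal block $r\cdot\mathrm{diag}(s_v^{r-1})$, which is invertible whenever all $s_v\neq 0$. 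On the locus where some $s_v$ vanish (equivalently, $f(q_v)=0$) I would switch to the $\partial/\partial a_j$-columns for the offending rows, which record the evaluation functionals on the monomial basis of $H^0(\mathbb{P}^1,\mathcal{O}(rd))$ at the relevant $q_v$. The key point is that a smooth $f$ has exactly $rd$ distinct roots, so at most $rd$ of the $q_v$ lie in its zero locus, and any $\leq rd$ evaluation functionals at distinct points of $\mathbb{P}^1$ are linearly independent on polynomials of degree $\leq rd$. Hence the Jacobian has full rank $n$ everywhere, yielding smoothness of relative dimension $rd+1$.

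For geometric connectedness of the fiber $F_p$ over $p=(p_1,\ldots,p_{n-3})$ I would use Kummer theory. One has $F_p=\{\mathrm{disc}(f)\neq 0\}\subset\Spec R$ with
\[
R=\bar k[a_0,\ldots,a_{rd},s_1,\ldots,s_n]\bigl/\bigl(s_v^r-f(q_v):v=1,\ldots,n\bigr),
\]
so it suffices to show that $R$ is an integral domain. Now $\bar k[a_\bullet]$ is a UFD, and the linear forms $f(q_v)=\sum_j q_v^{(j)}a_j$ have pairwise non-proportional coefficient vectors: for finite $q_v=(1,p)$ the vector is Vandermonde $(1,p,\ldots,p^{rd})$ with pairwise distinct $p$'s, while $q_v=\infty$ and $q_v=0$ contribute the standard basis vectors $e_{rd}$ and $e_0$. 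Therefore the $f(q_v)$ are pairwise non-associate irreducibles, hence $\mathbb{Z}/r$-linearly independent in $\bar k(a_\bullet)^\times/(\bar k(a_\bullet)^\times)^r$, and Kummer theory gives that $\bar k(a_\bullet)(f(q_1)^{1/r},\ldots,f(q_n)^{1/r})/\bar k(a_\bullet)$ is a field extension of Galois group $\mu_r^n$ and degree $r^n$, forcing $R$ to be an order in this field, in particular a domain.

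The main obstacle is the Kummer-independence step: one must confirm the non-proportionality of the coefficient vectors uniformly in all admissible configurations of $n$ distinct $\mathbb{P}^1_{\bar k}$-points, and match the presentation of $R$ with the natural subring of the Kummer field. Once this is in hand, the atlas-level projection is smooth with geometrically integral fibers of dimension $rd+1$, and descent along the smooth connected torsor $\mathbb{A}_{r,n,\text{sm}}(rd)\to\mathcal{H}_{r,g,n}^{\text{far}}$ transfers both smoothness and geometric connectedness of fibers to $\mathcal{H}_{r,g,n}^{\text{far}}\to\mathcal{M}_{0,n}$.
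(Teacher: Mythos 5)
Your argument is correct, but it takes a genuinely different route from the paper's. The paper's proof is purely formal: it disposes of $n\leq3$ by invoking the smoothness and geometric connectedness of $\mathcal{P}_{r,g,n}$, and for general $n$ it exhibits the cartesian square identifying $\mathcal{H}_{r,g,n}^{\text{far}}$ with $\mathcal{C}_{r,g}^n\times_{\mathcal{M}_{0,1}^n}\mathcal{M}_{0,n}$ (being "far" is exactly the condition that the intermediate sections are disjoint), so that smoothness and connectedness of fibers are inherited by base change from the single known case $\mathcal{C}_{r,g}\simeq\mathcal{H}_{r,g,1}\rightarrow\mathcal{M}_{0,1}$. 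You instead work directly on the presentation of Corollary~\ref{cor:quotientclosedHgnfar}: after descending along the $(\mathbb{G}_m/\mu_{d})$-torsor you check smoothness of $\mathbb{A}_{r,n,\text{sm}}(rd)\rightarrow\mathbb{A}^{n-3}\setminus\widetilde{\Delta}$ by the relative Jacobian criterion (the diagonal block $r\,\mathrm{diag}(s_v^{r-1})$ plus linear independence of at most $rd$ evaluation functionals at distinct points of $\mathbb{P}^1$ on forms of degree $rd$, using $\mathrm{char}\,k\nmid 2rd$), and you prove the fibers are integral via Kummer theory: the values $f(q_v)$ are pairwise non-associate linear forms in the UFD $\bar k[a_0,\ldots,a_{rd}]$, hence independent in $K^\times/(K^\times)^r$, so $R$ is $A$-free, embeds in $R\otimes_A K$, and the latter is a degree-$r^n$ Kummer field. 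This is more work but yields more: geometric \emph{integrality} of the fibers and their relative dimension $rd+1$, in the same explicit spirit as the irreducibility arguments of Propositions~\ref{prop:integralityDeltagn} and~\ref{prop:surjectivepicard}; the paper's route is shorter and equation-free but only gives connectedness. Two small points you should still write out: nonemptiness of the fibers (a smooth form of degree $rd$ avoiding the $n$ marked points exists since the field is infinite), and the bookkeeping for $n\leq2$, where $\mathcal{M}_{0,n}$ is a quotient stack and the pullback of $\mathcal{H}_{r,g,n}^{\text{far}}$ along the atlas $\Spec k\rightarrow\mathcal{M}_{0,n}$ is the quotient of $\mathbb{A}_{r,n,\text{sm}}(rd)$ by the kernel $\mathbb{G}_m/\mu_{d}$ of $G_{r,g,n}\rightarrow\underline{\mathrm{Aut}}(\mathbb{P}^1,\sigma_1,\ldots,\sigma_n)$, not by all of $G_{r,g,n}$; both are routine and do not affect the argument.
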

			\begin{proof}
				Using Corollary~\ref{cor:representationweightedprojectivestack3<=n<=2g+3}, in the case of $3\leq n\leq rd+1$ the map corresponds to the projection
				\[
				\begin{tikzcd}
					(\mathbb{A}^{n-3}\setminus\widetilde{\Delta})\times\mathcal{P}(r^{rd+1-n},1^n)\arrow[r]& \mathbb{A}^{n-3}\setminus\widetilde{\Delta}
				\end{tikzcd}
				\]
				which is smooth. Hence it is smooth for $n\leq3$ too. Now, let $n\geq2$ and consider the $n$-fold fibered product $\mathcal{M}_{0,1}^n:=\mathcal{M}_{0,1}\times_{\mathcal{M}_{0,0}}\ldots\times_{\mathcal{M}_{0,0}}\mathcal{M}_{0,1}$. Then, we have a cartesian diagram
				\[
					\begin{tikzcd}[row sep=small,column sep=small]
						\mathcal{H}_{r,g,n}^{\text{far}}\arrow[dd]\arrow[rr] && \mathcal{M}_{0,n}\arrow[dd]\\
						& \square\\
						\mathcal{C}_{r,g}^n\arrow[rr] && \mathcal{M}_{0,1}^n
					\end{tikzcd}
				\]
				Now, we know that $\mathcal{C}_{r,g}\simeq\mathcal{H}_{r,g,1}\rightarrow\mathcal{M}_{0,1}$ is smooth with geometrically connected fibers, hence also $\mathcal{C}_{r,g}^n\rightarrow\mathcal{M}_{0,1}^n$ is. The cartesian diagram above concludes.
			\end{proof}
			\begin{lemma}\label{lem:picardrelativecompleteintersection}
			Let $S$ be an integral scheme over $k$, $X$ a closed subscheme in $S\times_k \mathbb{P}^{n}_k$. Suppose that the first projection $q_1:X\rightarrow S$ is surjective, proper, and flat. Suppose moreover that the geometric fibers of $q_1$ are integral complete intersections of dimension $\geq3$. Then, the restriction map
			\[
			\begin{tikzcd}
				\mathrm{Pic}(S)\times\mathrm{Pic}(\mathbb{P}_k^n)\arrow[r] & \mathrm{Pic}(X)
			\end{tikzcd}
			\]
			is an isomorphism.
			\end{lemma}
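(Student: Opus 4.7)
The plan is to combine a fiberwise application of the Grothendieck-Lefschetz theorem for complete intersections with the seesaw principle (cohomology and base change). Write $p_1,p_2$ for the two projections of $S\times_k\mathbb{P}^n_k$, set $q_1=p_1|_X$, and let $H:=p_2^*\mathcal{O}(1)|_X$ denote the hyperplane class restricted to $X$.

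\textbf{Injectivity.} Suppose that $(L,\mathcal{O}(a))$ lies in the kernel, so that $q_1^*L\otimes H^{\otimes a}\cong\mathcal{O}_X$. Restricting to any geometric fiber $X_s$ gives $\mathcal{O}_{X_s}(a)\cong\mathcal{O}_{X_s}$, which forces $a=0$ since $X_s$ is an integral projective scheme of positive dimension. Then $q_1^*L\cong\mathcal{O}_X$; applying $(q_1)_*$ and using that $(q_1)_*\mathcal{O}_X=\mathcal{O}_S$ (which follows from proper flatness together with geometrically integral fibers) yields $L\cong\mathcal{O}_S$.

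\textbf{Surjectivity.} Given $L\in\mathrm{Pic}(X)$, for every geometric point $s$ of $S$ the fiber $X_s$ is an integral complete intersection of dimension $\geq 3$ in $\mathbb{P}^n_{\kappa(s)}$, so by the Grothendieck-Lefschetz theorem there is a unique integer $a(s)$ with $L|_{X_s}\cong\mathcal{O}_{X_s}(a(s))$. The first key point is that $a(s)$ is constant on $S$. For this, observe that $m\mapsto\chi(X_s,L|_{X_s}(m))$ is a polynomial in $m$ whose coefficients are locally constant in $s$ by flatness of $q_1$; on each fiber it equals $\chi(\mathcal{O}_{X_s}(a(s)+m))$, and distinct values of $a(s)$ produce distinct polynomials. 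Hence $a(s)$ is locally constant, and since $S$ is integral (in particular connected), $a(s)\equiv a$ for a fixed $a\in\mathbb{Z}$. Now set $M:=L\otimes H^{\otimes(-a)}$, so that $M|_{X_s}\cong\mathcal{O}_{X_s}$ for every geometric $s$. In particular $h^0(X_s,M|_{X_s})=1$ is constant, so cohomology and base change implies that $(q_1)_*M$ is an invertible sheaf on $S$ whose formation commutes with base change; the adjunction $q_1^*(q_1)_*M\to M$ is then an isomorphism on each geometric fiber, and hence globally. Consequently
\[
L\;\cong\;q_1^*\bigl((q_1)_*M\bigr)\otimes H^{\otimes a}
\]
lies in the image of $\mathrm{Pic}(S)\oplus\mathrm{Pic}(\mathbb{P}^n_k)$.

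\textbf{Main obstacle.} The most delicate point is the passage from the fiberwise Grothendieck-Lefschetz statement to a statement on the total space $X$, which splits into two pieces: constancy of the integer $a(s)$, and the base-change step identifying $(q_1)_*M$ with a line bundle. Both ultimately rest on the combination of flatness of $q_1$ with the fact that the fibers are geometrically integral projective complete intersections of dimension $\geq 3$, the latter being precisely the hypothesis that makes $h^i(X_s,\mathcal{O}_{X_s})$ behave predictably and that renders $\mathrm{Pic}$ of each fiber isomorphic to $\mathbb{Z}$ in a canonical way. A conceptually cleaner alternative, if desired, is to package everything through the relative Picard functor $\mathrm{Pic}_{X/S}$, which in this setting is an \'etale constant sheaf with fiber $\mathbb{Z}$, giving the splitting directly.
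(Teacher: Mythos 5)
Your proof is correct and follows essentially the same route as the paper's: fiberwise Grothendieck–Lefschetz, constancy of the twisting integer via flatness, and Grauert/cohomology-and-base-change (using that $S$ is reduced) to descend the fiberwise-trivial sheaf to $S$, with injectivity handled by restricting to a fiber and pushing forward. The only differences are cosmetic — you spell out the constancy of $a(s)$ via Hilbert polynomials where the paper simply cites flatness, and your injectivity step uses ampleness/positive dimension in place of quoting the Lefschetz isomorphism again.
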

			\begin{proof}
			Since every geometric fiber of $q_1$ is an integral complete intersection, we can apply the version of the Grothendieck-Lefschetz Theorem found in~\cite[Corollaire 3.7, exposé XII, pag.121]{SGA} to get that for all $s\in S$ the restriction map $\mathrm{Pic}(\mathbb{P}^n_{k(s)})\rightarrow\mathrm{Pic}(X_s)$ is an isomorphism. Let $L$ be an invertible sheaf on $X$. For all $s\in S$ there exists an integer $m_s$ such that $L_s$ is the restriction of $\mathcal{O}_{\mathbb{P}^n_{k(s)}}(m_s)$ to $X_s$. Since the map $q_1$ is flat, the function $s\mapsto m_s$ is constant, say $m\in\mathbb{Z}$. Let $q_2:X\rightarrow\mathbb{P}^n$ be the second projection. It follows that $L\otimes q_2^*\mathcal{O}_{\mathbb{P}^n}(-m)$ is trivial on each fiber of $q_1$. Since $q_1$ is proper, flat, with geometrically integral fibers, and the base $S$ is integral, by Grauert there exists an invertible sheaf $M$ on $S$ such that $L\otimes q_2^*\mathcal{O}_{\mathbb{P}^n}(-m)\simeq q_1^*M$. This implies that $\mathrm{Pic}(S)\times\mathrm{Pic}(\mathbb{P}^n)\rightarrow\mathrm{Pic}(X)$ is surjective. The injectivity is immediate. Indeed, suppose $q_1^*M\otimes q_2^*N$ is isomorphic to the structure sheaf of $X$, let $s:\Spec k(s)\rightarrow S$ be a point and $s_X:X_s\rightarrow X$ its base change. Then $s_X^*q_1^*M\simeq\mathcal{O}_{X_s}$, while $s_X^*q_2^*$ is an isomorphism. Thus $N$ is trivial. Finally, ${q_1}_*q_1^*M\simeq M$ by the projection formula, hence $M$ is also trivial.
			\end{proof}
			\begin{proposition}\label{prop:surjectivepicard}
			Let $n\geq rd+1$. Then, the restriction map
			\[
				\begin{tikzcd}
					\mathbb{Z}\simeq\mathrm{Pic}(\mathbb{P}^{n-1})\simeq\mathrm{Pic}((\mathbb{A}^{n-3}\setminus\widetilde{\Delta})\times\mathbb{P}^{n-1})\arrow[r] & \mathrm{Pic}(\mathcal{H}_{r,g,n}^{\text{far}})
				\end{tikzcd}
			\]
			is surjective, and induces an isomorphism $\mathrm{Pic}(\mathcal{H}_{r,g,n}^{\text{far}})\simeq\mathbb{Z}/2r(rd-1)\mathbb{Z}$.
			\end{proposition}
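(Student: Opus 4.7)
The plan is to use Proposition~\ref{prop:thirdpresentation} to realize $\mathcal{H}_{r,g,n}^{\text{far}}$ as an open subscheme of a projective family over $S := \mathbb{A}^{n-3}\setminus\widetilde{\Delta}$, apply Lemma~\ref{lem:picardrelativecompleteintersection} to compute the Picard group of that family, and then delete the boundary divisor.

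Let $Y \subset S \times \mathbb{P}^{n-1}$ be the closed subscheme cut out by the $m = n-rd-1$ polynomials $f_i$ of Proposition~\ref{prop:thirdpresentation}, so that $\mathcal{H}_{r,g,n}^{\text{far}} = Y \setminus (Y \cap \mathrm{V}(\Phi_{r,g,rd+1}))$. The first projection $q_1\colon Y \to S$ is proper, being the restriction of the proper projection $S \times \mathbb{P}^{n-1} \to S$ to a closed subscheme, and a dimension count shows that every fiber has dimension at least $n-1-m = rd$, which is at least $3$ under the standing hypotheses $g \geq 2$, $r \geq 2$. I will verify moreover that these fibers are integral complete intersections of dimension exactly $rd$; flatness of $q_1$ then follows from miracle flatness, since $Y$ is a local complete intersection inside the smooth scheme $S \times \mathbb{P}^{n-1}$, hence Cohen--Macaulay, and the base $S$ is regular. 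Lemma~\ref{lem:picardrelativecompleteintersection} then gives $\mathrm{Pic}(Y) \simeq \mathrm{Pic}(S) \oplus \mathrm{Pic}(\mathbb{P}^{n-1})$. Since $S$ is $\mathbb{A}^{n-3}$ with a union of hyperplanes removed, the standard excision sequence collapses to $\mathrm{Pic}(S) = 0$, so $\mathrm{Pic}(Y) \simeq \mathbb{Z}$, generated by the pullback of $\mathcal{O}_{\mathbb{P}^{n-1}}(1)$.

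To finish, restriction gives a surjection $\mathrm{Pic}(Y) \twoheadrightarrow \mathrm{Pic}(\mathcal{H}_{r,g,n}^{\text{far}})$ whose kernel is generated by the class of the boundary divisor $Y \cap \mathrm{V}(\Phi_{r,g,rd+1})$; this uses smoothness of $\mathcal{H}_{r,g,n}^{\text{far}}$ and normality of $Y$ near the boundary. As computed in Section~\ref{sec:3<=n<=2g+3} via the $\mathbb{G}_m$-weight calculation, $\Phi_{r,g,rd+1}$ is homogeneous of degree $2r(rd-1)$ in the projective variables $s_1, s_2, s_3, t_1, \ldots, t_{n-3}$, and hence the class of the boundary in $\mathrm{Pic}(Y) \simeq \mathbb{Z}$ is exactly $2r(rd-1)$ times the generator. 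This yields the asserted isomorphism $\mathrm{Pic}(\mathcal{H}_{r,g,n}^{\text{far}}) \simeq \mathbb{Z}/2r(rd-1)\mathbb{Z}$ induced by $\mathcal{O}_{\mathbb{P}^{n-1}}(1)$.

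The main obstacle will be verifying integrality of the geometric fibers of $q_1$. Over a point $(p_1,\ldots,p_{n-3}) \in S$, the fiber is a successive tower of $r$-th root extractions $t_i^r = f(1,p_i)$ for $i = rd+2, \ldots, n$ over the $\mathbb{P}^{rd}$ with coordinates $(s_1, s_2, s_3, t_1, \ldots, t_{rd-2})$, and integrality boils down to showing that none of the $f(1,p_i)$ is an $r$-th power in the relevant function field once we are outside $\mathrm{V}(\Phi_{r,g,rd+1})$. I expect this to follow by an argument parallel to the proof of Proposition~\ref{prop:integralityDeltagn}: combine the openness (and non-emptiness) of the locus in $S$ where the cover is generically étale with the irreducibility of $\mathcal{H}_{r,g,n}^{\text{far}}$ coming from the geometric connectedness of $\mathcal{H}_{r,g,n}$, in order to extend integrality from a dense open to every fiber.
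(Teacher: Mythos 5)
Your overall strategy is the same as the paper's: realize $\mathcal{H}_{r,g,n}^{\text{far}}$ as the complement of $\mathrm{V}(\Phi_{r,g,rd+1})$ in the relative complete intersection $\widehat{\mathcal{H}}_{r,g,n}^{\text{far}}\subset(\mathbb{A}^{n-3}\setminus\widetilde{\Delta})\times\mathbb{P}^{n-1}$ of Proposition~\ref{prop:thirdpresentation}, apply Lemma~\ref{lem:picardrelativecompleteintersection}, and then quotient by the boundary class, whose degree on the fibers is $2r(rd-1)$. The problem is that the step you yourself identify as ``the main obstacle'' is exactly the content of the paper's proof, and the mechanism you propose for it does not work. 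Lemma~\ref{lem:picardrelativecompleteintersection} requires \emph{every} geometric fiber of $q_1\colon\widehat{\mathcal{H}}_{r,g,n}^{\text{far}}\rightarrow\mathbb{A}^{n-3}\setminus\widetilde{\Delta}$ to be integral --- the full closed fiber in $\mathbb{P}^{n-1}$, including its intersection with $\mathrm{V}(\Phi_{r,g,rd+1})$, and over every point of the base, not just a dense open. Lemma~\ref{lem:smoothnessforgettingcurve} gives integrality only of the open parts $Y_p\setminus\mathrm{V}(\Phi_{r,g,rd+1})$, and what remains to be excluded is an irreducible component of the closed fiber contained in $\mathrm{V}(\Phi_{r,g,rd+1})$ (plus reducedness). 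Your proposal to ``extend integrality from a dense open to every fiber'' by combining openness of a nice locus in the base with irreducibility of $\mathcal{H}_{r,g,n}^{\text{far}}$ conflates two different statements: geometric integrality of fibers does not propagate from a dense open subset of the base to the whole base (think of a family of conics degenerating to two lines), and irreducibility of the total space says nothing about a fixed closed fiber; the argument of Proposition~\ref{prop:integralityDeltagn} that you invoke proves irreducibility of a \emph{total space} from integrality of \emph{generic} fibers, which is the opposite direction of what you need here. Also, ``$f(1,p_i)$ is not an $r$-th power in the function field'' is not by itself the right irreducibility criterion for $t^r-a$ when $r$ is composite, and in any case that criterion only becomes relevant after one knows the associated points of the fiber lie over the generic point, which again begs the question of components inside $\mathrm{V}(\Phi_{r,g,rd+1})$.

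The paper closes this gap with a Gr\"obner basis argument: the $f_i$ form a reduced Gr\"obner basis of the ideal $I_m$ they generate, every nonzero polynomial of degree $\leq r-1$ in each of the last $m$ variables lies outside $I_m$, and since $\Phi_{r,g,rd+1}$ does not involve those variables, no relation $h\cdot\Phi_{r,g,rd+1}\in I_m$ with $h\notin I_m$ can exist; hence no component of a fiber lies in $\mathrm{V}(\Phi_{r,g,rd+1})$, and reducedness follows from generic reducedness plus the Cohen--Macaulay property of complete intersections. If you prefer to stay closer to your sketch, a workable substitute is to observe that each $f_i$ is monic in its own variable $t_i$, so the fiber $Y_p$ is finite and flat over the $\mathbb{P}^{rd}$ with coordinates $(s_1,s_2,s_3,t_1,\ldots,t_{rd-2})$; flatness forces all associated points of $Y_p$ to lie over the generic point of $\mathbb{P}^{rd}$, which lies outside $\mathrm{V}(\Phi_{r,g,rd+1})$ once one checks that $\Phi_{r,g,rd+1}$ does not vanish identically for the fixed $(p_1,\ldots,p_{n-3})$, so integrality of $Y_p$ follows from integrality of the open part. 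Either way, some such argument must actually be written out; as it stands the proposal assumes the hardest point. Two smaller remarks: the ``normality of $Y$ near the boundary'' you invoke for the excision step is nowhere established (the paper instead argues via smoothness of the open substack and the fibrewise identification of $\mathcal{O}(\mathrm{V}(\Phi_{r,g,rd+1}))$), and the exact dimension count for the fibers, needed for miracle flatness, is part of what the fiber analysis has to deliver.
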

			\begin{proof}
			Write $n=rd+1+m$. Let $\widehat{\mathcal{H}}_{r,g,n}^{\text{far}}\subset(\mathbb{A}^{n-3}\setminus\widetilde{\Delta})\times\mathbb{P}^{n-1}$ be the closed subscheme defined by the same polynomials $f_i$, so that $\mathcal{H}_{r,g,n}^{\text{far}}=\widehat{\mathcal{H}}_{r,g,n}^{\text{far}}\setminus\mathrm{V}(\Phi_{r,g,rd+1})$. Then $\widehat{\mathcal{H}}_{r,g,n}^{\text{far}}$ is a complete intersection of $m$ hypersurfaces. This follows easily from the particular form of these polynomials, since the variables $p_j$ and $t_j$ with $i\not=j>rd+1$ do not appear in $f_i=t_i^r-f(1,p_i)$. For the same reason, the geometric fibers of the first projection $q_1:\widehat{\mathcal{H}}_{r,g,n}^{\text{far}}\rightarrow\mathbb{A}^{n-3}\setminus\widetilde{\Delta}$ are complete intersections. Notice that the restriction of $q_1$ to $\mathcal{H}_{r,g,n}^{\text{far}}$ coincides with the map
			\[
				\begin{tikzcd}
					\mathcal{H}_{r,g,n}^{\text{far}}\arrow[r] & \mathcal{M}_{0,n}
				\end{tikzcd}
			\]
			which forgets the smooth curve, and considers just the sections to the Brauer-Severi scheme. By Lemma~\ref{lem:smoothnessforgettingcurve} we know that it is smooth with geometrically connected fibers.
			In particular, the fibers of $q_1$ restricted to $\mathcal{H}_{r,g,n}^{\text{far}}$ are geometrically integral. Now, we want to show that the whole fibers of $q_1$ are geometrically integral.
			
			First of all, recall that the fiber of $q_1$ over a point $p=(p_1,\ldots,p_{n-3})\in\mathbb{A}^{n-3}\setminus\widetilde{\Delta}$ is defined by the $m$ equations $f_i=0$, where $f_i=t_i^r-f(1,p_i)$. More explicitly, in terms of the immersion $h_{rd+1}$ the relation $f_i$ is given by
			\[
				t_i^r-\left(s_2^r+\sum_{j=0}^{rd-2}\varphi_{rd-2-j}^{(rd+1)}p_i^{j+1}+s_1^rp_i^{rd}\right)
			\]
			where we regard $p_1,\ldots,p_{n-3}$ as constants (since we are looking at a fiber), and the $\varphi^{(rd+1)}_{rd-2-j}$ come from Lemma~\ref{lem:rationality}, so they do not involve the $m$ variables $t_{rd-1},\ldots,t_{n-3}$.
			Moreover, notice that the polynomials $f_{n-3},\ldots,f_{rd-1}$ form a reduced Gr\"obner base of the ideal $I_m=(f_{n-3},\ldots,f_{rd-1})$ with respect to the lexical monomial ordering given by
			\[
				t_{n-3}>t_{n-4}>\ldots>t_1>s_3>s_2>s_1.
			\]
			Therefore a polynomial is in $I_m$ if and only if it can be reduced to 0 using the division algorithm.
			
			Now we show that every geometric fiber of $q_1$ is irreducible. Indeed, we have already shown that the restriction of $q_1$ to the complement of $\mathrm{V}(\Phi_{r,g,rd+1})$ has geometrically integral fibers. Therefore, if a geometric fiber of $q_1$ is not irreducible then there exists a polynomial $h$ not in $I_m$ such that $h\cdot\Phi_{r,g,rd+1}\in I_m$. Thanks to the properties of the Gr\"obner bases, a non-zero polynomial whose degree in each $t_{rd-1},\ldots,t_{n-3}$ is $\leq r-1$ cannot be in $I_m$. Since we may assume that this property holds for $h$, and we know that $\Phi_{r,g,rd+1}$ does not depend on $t_{rd-1},\ldots,t_{n-3}$, we get that also $h\cdot\Phi_{r,g,rd+1}$ has degree $\leq r-1$ in those variables. We have reached the desired contradiction, hence every geometric fiber is irreducible.
			In particular, we have proved that every geometric fiber is generically reduced, hence reduced being a complete intersection and so Cohen-Macaulay. Therefore, every fiber of $q_1$ is geometrically integral.
			
			Moreover, every fiber is $rd$-dimensional. Again using the fact that $\widehat{\mathcal{H}}_{r,g,n}^{\text{far}}$ is Cohen-Macaulay, by miracle flatness it follows that $q_1$ is flat. Finally, it is clear that $q_1$ is proper. Therefore, we can apply Lemma~\ref{lem:picardrelativecompleteintersection}, obtaining that
			\[
				\begin{tikzcd}
					\mathbb{Z}\simeq\mathrm{Pic}(\mathbb{P}^{n-1})\simeq\mathrm{Pic}(\mathbb{A}^{n-3}\setminus\widetilde{\Delta})\times\mathrm{Pic}(\mathbb{P}^{n-1})\arrow[r] & \mathrm{Pic}(\widehat{\mathcal{H}}_{r,g,n}^{\text{far}})
				\end{tikzcd}
			\]
			is an isomorphism.
			Composing with the homomorphism
			\[
				\begin{tikzcd}
					\mathrm{Pic}(\widehat{\mathcal{H}}_{r,g,n}^{\text{far}})\arrow[r] & \mathrm{Pic}(\mathcal{H}_{r,g,n}^{\text{far}})
				\end{tikzcd}
			\]
			which is surjective since the second scheme is smooth, we get the first part of the Proposition. Notice that the kernel of the last map is generated by the invertible sheaf $L$ associated to $\mathrm{V}(\Phi_{r,g,rd+1})$. On the fibers over $\mathbb{A}^{n-3}\setminus\widetilde{\Delta}$, $L$ is the structure sheaf twisted by $2r(rd-1)$. This (and the proof of Lemma~\ref{lem:picardrelativecompleteintersection}) shows that the invertible sheaf associated to $\mathrm{V}(\Phi_{r,g,rd+1})$ is the image of $2r(rd-1)$ under $\mathbb{Z}\rightarrow\mathrm{Pic}(\widehat{\mathcal{H}}_{r,g,n}^{\text{far}})$. Therefore, $\mathrm{Pic}(\mathcal{H}_{r,g,n}^{\text{far}})\simeq\mathbb{Z}/2r(rd-1)\mathbb{Z}$.
		\end{proof}
		\begin{remark}\label{rmk:integralgeometricfibers}
			Let $\widehat{\mathcal{H}}_{r,g,n}^{\text{far}}\subset(\mathbb{A}^{n-3}\setminus\widetilde{\Delta})\times\mathbb{P}^{n-1}$ be the closed subscheme defined by the polynomials $f_i$, so that $\mathcal{H}_{r,g,n}=\widehat{\mathcal{H}}_{r,g,n}^{\text{far}}\setminus\mathrm{V}(\Phi_{r,g,rd+1})$. The proof of the above Proposition shows that the projection from $\widehat{\mathcal{H}}_{r,g,n}^{\text{far}}$ to $\mathcal{M}_{0,n}$ is flat, proper, and has integral geometric fibers. This will be used in Lemma~\ref{lem:invertibleglobalsectionsHgnfarn>=2g+3}.
		\end{remark}
		Now, Proposition~\ref{prop:surjectivepicard} has a useful Corollary, which we will use in Section~\ref{sec:finalcomputation}.
		\begin{corollary}\label{cor:injectivepicard}
			Let $n\geq rd+1$. The map
			\[
				\begin{tikzcd}
					q_{n+1,n}:\mathcal{H}^{\text{far}}_{r,g,n+1}\arrow[r] & \mathcal{H}^{\text{far}}_{r,g,n}
				\end{tikzcd}
			\]
			given by forgetting the last section, induces an isomorphism
			\[
				\begin{tikzcd}
					q_{n+1,n}^*:\mathrm{Pic}(\mathcal{H}^{\text{far}}_{r,g,n})\arrow[r,"\simeq"] & \mathrm{Pic}(\mathcal{H}^{\text{far}}_{r,g,n+1}).
				\end{tikzcd}
			\]
		\end{corollary}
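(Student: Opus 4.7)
By Proposition~\ref{prop:surjectivepicard}, both $\mathrm{Pic}(\mathcal{H}^{\mathrm{far}}_{r,g,n})$ and $\mathrm{Pic}(\mathcal{H}^{\mathrm{far}}_{r,g,n+1})$ are cyclic of the same order $2r(rd-1)$, so it suffices to show that $p_{n+1,n}^*$ is surjective; equivalently, that it sends a generator to a generator. By the proof of that proposition, a generator of either group is the class of the restriction of $\mathcal{O}(1)$ from the ambient projective space in the embedding of Proposition~\ref{prop:thirdpresentation}, which we can concretely represent as the Cartier divisor $\{s_1=0\}$ cut out by the first weight-$1$ coordinate $s_1$ on $\mathbb{P}^{n-1}$ (resp.\ $\mathbb{P}^{n}$).

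I would then lift $p_{n+1,n}$ to a $(\mathbb{G}_m/\mu_d)$-equivariant morphism between the scheme-level presentations of Proposition~\ref{prop:thirdpresentation} and check that this lift pulls $s_1$ back to $s_1$. Concretely, I claim the ambient projection
\[
\bigl((\mathbb{A}^{n-2}\setminus\widetilde{\Delta})\times\mathbb{A}^{n+1}\bigr)\longrightarrow\bigl((\mathbb{A}^{n-3}\setminus\widetilde{\Delta})\times\mathbb{A}^{n}\bigr)
\]
which forgets the new pair of coordinates $(p_{n-2},t_{n-2})$ restricts to the desired lift of $p_{n+1,n}$. This works because the defining polynomials $f_i=t_i^r-f(1,p_i)$ for $i\leq n$ on both sides literally coincide once one notes that they do not involve $p_{n-2}$ or $t_{n-2}$, and because the immersion $h_{rd+1}$ of Lemma~\ref{lem:rationality} that underlies both presentations depends only on the first $rd+1$ sections. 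Thus the coordinate $s_1$ is the same function upstairs and downstairs, and the preimage of $\{s_1=0\}$ downstairs is $\{s_1=0\}$ upstairs; passing to the $(\mathbb{G}_m/\mu_d)$-quotient gives $p_{n+1,n}^*[\{s_1=0\}]=[\{s_1=0\}]$, i.e.\ the pullback of a generator is a generator.

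The only real obstacle is the bookkeeping required to set up the two quotient-stack presentations compatibly, so that one may legitimately speak of ``the same $s_1$'' on both sides; once this is in place, both the verification that the projection is a well-defined morphism and the computation $p_{n+1,n}^*[\{s_1=0\}]=[\{s_1=0\}]$ are essentially immediate from the construction of $h_{rd+1}$.
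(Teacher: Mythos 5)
Your argument is correct, and it rests on the same mechanism as the paper's proof: by Proposition~\ref{prop:surjectivepicard} both Picard groups are $\mathbb{Z}/2r(rd-1)\mathbb{Z}$ generated by the restriction of the ambient $\mathcal{O}(1)$, so one only needs surjectivity of $p_{n+1,n}^*$, which follows once the hyperplane generator pulls back to the hyperplane generator; a surjection between finite groups of equal order is then an isomorphism. Where you differ is in how this pullback computation is organized. The paper first reduces to $p_{n,rd+1}$ and then restricts to a geometric fiber of $\mathcal{H}_{r,g,n}^{\text{far}}\rightarrow\mathcal{M}_{0,n}$, using the isomorphisms $\mathrm{Pic}(\mathcal{H}_{r,g,n}^{\text{far}})\simeq\mathrm{Pic}((\mathcal{H}_{r,g,n}^{\text{far}})_s)$ coming from Proposition~\ref{prop:surjectivepicard}, so that the forgetful map becomes a literal linear projection of subvarieties of projective spaces; you instead perform the computation globally, lifting $p_{n+1,n}$ to the $(\mathbb{G}_m/\mu_d)$-equivariant coordinate projection that drops $(p_{n-2},t_{n-2})$ on the affine presentations, and observing that it fixes $s_1$, hence pulls the Cartier divisor $\{s_1=0\}$ (a legitimate representative, since $s_1$ does not vanish identically on the integral scheme $\mathcal{H}_{r,g,n}^{\text{far}}$) back to $\{s_1=0\}$. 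Your route skips the fiberwise reduction and the auxiliary restriction-to-fiber isomorphisms entirely, at the cost of the compatibility bookkeeping you flag; that bookkeeping is genuinely routine here, because for $n\geq rd+1$ both presentations are built from $h_{rd+1}$, the reconstruction of $f$ via Lemma~\ref{lem:rationality} uses only the data of the first $rd+1$ sections (which are untouched by the projection), and the extra defining equations $f_i=t_i^r-f(1,p_i)$ downstairs are exactly those upstairs not involving the dropped pair. Note that the paper's fiberwise proof implicitly relies on the same identification of the forgetful map with a coordinate projection, so nothing essential is lost or gained in rigor; your version is marginally more direct, the paper's is marginally lighter on notation.
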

		\begin{proof}
			Notice that it is enough to show that $q_{n,rd+1}:\mathcal{H}^{\text{far}}_{r,g,n}\rightarrow\mathcal{H}^{\text{far}}_{r,g,rd+1}$ induces an isomorphism $q_{n,rd+1}^*:\mathrm{Pic}(\mathcal{H}^{\text{far}}_{r,g,rd+1})\rightarrow\mathrm{Pic}(\mathcal{H}^{\text{far}}_{r,g,n})$ for all $n\geq rd+1$. Now, let $s$ be a geometric point of $\mathcal{M}_{0,n}$, and consider the commutative diagram with cartesian squares
			\[
				\begin{tikzcd}
					(\mathcal{H}_{r,g,n}^{\text{far}})_{s}\arrow[d]\arrow[r] & (\mathcal{H}_{r,g,rd+1}^{\text{far}})_{s'}\arrow[r]\arrow[d] & \Spec k(s)\arrow[d,"s"']\arrow[dd,bend left=60,"s'"]\\
					\mathcal{H}_{r,g,n}^{\text{far}}\arrow[dr]\arrow[r] & X\arrow[r]\arrow[d] & \mathcal{M}_{0,n}\arrow[d]\\
					& \mathcal{H}_{r,g,rd+1}^{\text{far}}\arrow[r] & \mathcal{M}_{0,rd+1}
				\end{tikzcd}
			\]
			Proposition~\ref{prop:surjectivepicard} shows that the map $\mathrm{Pic}(\mathcal{H}_{r,g,n}^{\text{far}})\rightarrow\mathrm{Pic}((\mathcal{H}_{r,g,n}^{\text{far}})_s)$ is an isomorphism, and similarly for $rd+1$. To conclude it is enough to show that
			\[
				\begin{tikzcd}
					\mathrm{Pic}((\mathcal{H}^{\text{far}}_{r,g,rd+1})_{s'})\arrow[r] & \mathrm{Pic}((\mathcal{H}^{\text{far}}_{r,g,n})_{s})
				\end{tikzcd}
			\]
			is an isomorphism. Notice that it is induced by the projection
			\[
				\begin{tikzcd}[row sep=small, column sep=small]
					\mathbb{P}^{n-1}_{k(s)}\arrow[r,phantom,"\supset"] & (\mathcal{H}_{r,g,n}^{\text{far}})_s\arrow[rr] & & \mathbb{P}^{rd}_{k(s)}
				\end{tikzcd}
			\]
			given by $(s_1,s_2,s_3,t_1,\ldots,t_{n-3})\mapsto(s_1,s_2,s_3,t_1,\ldots,t_{rd-2})$. Therefore, it is clear that the pullback of an hyperplane of $\mathbb{P}^{rd}_{k(s)}$ is the restriction of an hyperplane of $\mathbb{P}^{n-1}_{k(s)}$ to $(\mathcal{H}_{r,g,n}^{\text{far}})_s$. It follows that the homomorphism $\mathrm{Pic}((\mathcal{H}^{\text{far}}_{r,g,rd+1})_{s'})\rightarrow\mathrm{Pic}((\mathcal{H}^{\text{far}}_{r,g,n})_{s})$ is surjective, hence an isomorphism.
		\end{proof}
		\section{Computation of $\mathrm{Pic}(\mathcal{H}_{r,g,n})$}\label{sec:finalcomputation}
		The only thing left to prove is that the first map in the right exact sequence~\eqref{eq:exactmain} is injective, and then to use the exact sequence to reconstruct the Picard group of $\mathcal{H}_{r,g,n}$.
		\subsection{Injectivity of the first map}
		This amounts to proving that the relations given in Corollary~\ref{cor:relationsZgnij} are actually the only relations between the classes $[\mathcal{Z}_{r,g,n}^{i,j,l}]$. To show it, we compute the invertible global sections of $\mathcal{H}_{r,g,n}^{\text{far}}$ for $n\geq rd+1$.
		Notice that there are non-constant invertible global sections over $\mathcal{H}_{r,g,n}^{\text{far}}$, for example those induced by the non trivial relations between the $[\mathcal{Z}_{r,g,n}^{i,j,l}]$ seen in Corollary~\ref{cor:relationsZgnij}. However, we show that for $n\geq rd+1$ these are essentially the only invertible global sections.
		\begin{lemma}\label{lem:invertibleglobalsectionsHgnfarn>=2g+3}
			Let $n\geq rd+1$. Recall that $\mathcal{H}_{r,g,n}^{\text{far}}$ is isomorphic to the closed subscheme of $((\mathbb{A}^{n-3}\setminus\widetilde{\Delta})\times\mathbb{P}^{n-1})\setminus V(\Phi_{r,g,rd+1})$ defined by the equations $f_i$, as in Proposition~\ref{prop:thirdpresentation}. Let $p_1,\ldots,p_{n-3}$ be the coordinates in the first factor. Then, the elements in $\mathcal{O}^*(\mathcal{H}_{r,g,n}^{\text{far}})$ are of the form
			\[
			\beta\cdot\prod_{i<j}(p_i-p_j)^{a_{i,j}}\prod_{i=1}^{n-3}p_i^{b_i}\prod_{i=1}^{n-3}(p_i-1)^{c_i}
			\]
			with $\beta\in k^*$ and $a_{i,j},b_i,c_i\in\mathbb{Z}$. In particular, the multiplicative subgroup $Q_{r,g,n}$ of invertible global sections with coefficient $\beta=1$ is isomorphic to $\mathbb{Z}^{n(n-3)/2}$.
		\end{lemma}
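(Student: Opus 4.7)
The plan is to show that every invertible global section on $\mathcal{H}_{r,g,n}^{\text{far}}$ is pulled back from $\mathcal{M}_{0,n} = \mathbb{A}^{n-3}\setminus\widetilde{\Delta}$, and then to describe the units on $\mathcal{M}_{0,n}$ explicitly. First, by Remark~\ref{rmk:integralgeometricfibers}, the projection $q_1 \colon \widehat{\mathcal{H}}_{r,g,n}^{\text{far}} \to \mathcal{M}_{0,n}$ is proper, flat, and has geometrically integral fibers. Standard cohomology and base change then gives $(q_1)_*\mathcal{O}_{\widehat{\mathcal{H}}_{r,g,n}^{\text{far}}} = \mathcal{O}_{\mathcal{M}_{0,n}}$. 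Taking global sections, this identifies rings, and hence identifies unit groups, so $\mathcal{O}^*(\widehat{\mathcal{H}}_{r,g,n}^{\text{far}}) = \mathcal{O}^*(\mathcal{M}_{0,n})$.

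To descend from $\widehat{\mathcal{H}}_{r,g,n}^{\text{far}}$ to its open subscheme $\mathcal{H}_{r,g,n}^{\text{far}} = \widehat{\mathcal{H}}_{r,g,n}^{\text{far}} \setminus V(\Phi_{r,g,rd+1})$, I would invoke the standard exact sequence
\[
1 \to \mathcal{O}^*(\widehat{\mathcal{H}}_{r,g,n}^{\text{far}}) \to \mathcal{O}^*(\mathcal{H}_{r,g,n}^{\text{far}}) \to \bigoplus_i \mathbb{Z}\cdot[V_i] \to \mathrm{Pic}(\widehat{\mathcal{H}}_{r,g,n}^{\text{far}}) \to \mathrm{Pic}(\mathcal{H}_{r,g,n}^{\text{far}}) \to 0,
\]
where the sum runs over irreducible components $V_i$ of $V := V(\Phi_{r,g,rd+1}) \cap \widehat{\mathcal{H}}_{r,g,n}^{\text{far}}$. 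The proof of Proposition~\ref{prop:surjectivepicard} gives $\mathrm{Pic}(\widehat{\mathcal{H}}_{r,g,n}^{\text{far}}) \simeq \mathbb{Z}$ (generated by the hyperplane class) and shows that the class of $V(\Phi_{r,g,rd+1})$ there is $2r(rd-1)$. Provided $V$ is irreducible, the middle arrow becomes multiplication by $2r(rd-1) \neq 0$, which is injective; therefore $\mathcal{O}^*(\mathcal{H}_{r,g,n}^{\text{far}}) = \mathcal{O}^*(\widehat{\mathcal{H}}_{r,g,n}^{\text{far}}) = \mathcal{O}^*(\mathcal{M}_{0,n})$.

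The main obstacle is establishing the irreducibility of $V$. For $n = rd+1$ this is $\overline{\Delta}_{r,g,rd+1}$, the image of the integral scheme $\Delta_{r,g,rd+1}$ of Proposition~\ref{prop:integralityDeltagn}, hence integral. For $n > rd+1$, I would use the natural projection $\pi \colon \widehat{\mathcal{H}}_{r,g,n}^{\text{far}} \to \widehat{\mathcal{H}}_{r,g,rd+1}^{\text{far}}$ that forgets the last $n - rd - 1$ sections; this is well-defined on $\widehat{\mathcal{H}}_{r,g,n}^{\text{far}}$ because the coordinates $s_1, s_2, s_3, t_1, \ldots, t_{rd-2}$ cannot vanish simultaneously (if they did, the polynomial $f$ would vanish identically, forcing $t_{rd-1},\ldots,t_{n-3}$ to vanish via $t_i^r = f(1,p_i)$). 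By an argument analogous to the one in Proposition~\ref{prop:surjectivepicard} (miracle flatness plus connectedness of the étale $(\mathbb{Z}/r)^m$-cover defined by $t_i^r = f(1,p_i)$, whose connectedness follows from the full monodromy of $t^r = f(1,p)$ as $p$ varies through simple zeros of $f(1,\cdot)$), the map $\pi$ is flat with geometrically integral fibers. Therefore $V = \pi^{-1}(\overline{\Delta}_{r,g,rd+1})$ is irreducible.

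Finally, computing $\mathcal{O}^*(\mathbb{A}^{n-3}\setminus\widetilde{\Delta})$ is routine: $\mathbb{A}^{n-3}$ is factorial with unit group $k^*$, and $\widetilde{\Delta}$ decomposes into the irreducible principal hyperplanes $\{p_i - p_j = 0\}$ for $i < j$ and $\{p_i = 0\}, \{p_i - 1 = 0\}$ for $1 \leq i \leq n-3$. Unique factorization in $k[p_1,\ldots,p_{n-3}]$ then produces exactly the stated form for the units, and the subgroup $Q_{r,g,n}$ of monomials with $\beta = 1$ is free on the $\binom{n-3}{2} + 2(n-3) = n(n-3)/2$ generators listed.
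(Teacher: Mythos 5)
Your overall route is the paper's: extend invertible sections from $\mathcal{H}_{r,g,n}^{\text{far}}$ across $\mathrm{V}(\Phi_{r,g,rd+1})$ to $\widehat{\mathcal{H}}_{r,g,n}^{\text{far}}$, use $(q_1)_*\mathcal{O}_{\widehat{\mathcal{H}}_{r,g,n}^{\text{far}}}\simeq\mathcal{O}_{\mathbb{A}^{n-3}\setminus\widetilde{\Delta}}$ from Remark~\ref{rmk:integralgeometricfibers} to see that all such sections come from the base, and then read off the units of $\mathbb{A}^{n-3}\setminus\widetilde{\Delta}$ by unique factorization; the counting at the end is correct. You are in fact more explicit than the paper on the extension step (the units--Picard sequence together with $\mathrm{Pic}(\widehat{\mathcal{H}}_{r,g,n}^{\text{far}})\simeq\mathbb{Z}$ and the class $2r(rd-1)$ of $\mathrm{V}(\Phi_{r,g,rd+1})$, which is exactly the input hidden behind the paper's ``it is clear''), and you correctly identify that this forces you to know that $V=\mathrm{V}(\Phi_{r,g,rd+1})\cap\widehat{\mathcal{H}}_{r,g,n}^{\text{far}}$ is irreducible, a fact the paper asserts without comment.

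The one step that fails as stated is your justification of that irreducibility for $n>rd+1$: the forgetful projection $\pi:\widehat{\mathcal{H}}_{r,g,n}^{\text{far}}\rightarrow\widehat{\mathcal{H}}_{r,g,rd+1}^{\text{far}}$ (which is well defined, for the reason you give) does \emph{not} have geometrically integral fibers everywhere. Its fiber over a point with associated form $f$ is cut out by the equations $t_i^r=f(1,p_i)$, and when $f(1,p)$ has no simple zero this locus can be reducible; for instance if $f$ is an $r$-th power of a degree-$d$ form (a point which does lie on $\mathrm{V}(\Phi_{r,g,rd+1})$ for suitable $s_j,t_j$), each curve $t_i^r=f(1,p_i)$ splits into $r$ components. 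Your monodromy argument presupposes a simple zero of $f(1,\cdot)$, which exists only off a proper closed subset of $\mathrm{V}(\Phi_{r,g,rd+1})$, so ``flat with geometrically integral fibers'' is false precisely over part of the locus you care about, and $V=\pi^{-1}(\overline{\Delta}_{r,g,rd+1})$ cannot be called irreducible on that basis. The gap is repairable by the paper's own technique from the proof of Proposition~\ref{prop:integralityDeltagn} (used again at the end of Proposition~\ref{prop:surjectivepicard}): the restriction of $\pi$ over $\overline{\Delta}_{r,g,rd+1}$ is flat by miracle flatness (equidimensional fibers over a Cohen--Macaulay source and regular target, after base change), hence open; its fibers are integral over the dense open locus of $\overline{\Delta}_{r,g,rd+1}$ where $f(1,\cdot)$ has a simple zero, so the preimage of that locus is irreducible; and openness excludes an irreducible component of $\pi^{-1}(\overline{\Delta}_{r,g,rd+1})$ mapping into the complementary proper closed subset. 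With that replacement (and granting, as the paper implicitly does, that $\widehat{\mathcal{H}}_{r,g,n}^{\text{far}}$ is regular enough in codimension one to run the divisorial argument), your proof goes through.
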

		\begin{proof}
			Let $\widehat{\mathcal{H}}_{r,g,n}^{\text{far}}\subset(\mathbb{A}^{n-3}\setminus\widetilde{\Delta})\times\mathbb{P}^{n-1}$ be the closed subscheme defined by the same polynomials $f_i$, see Remark~\ref{rmk:integralgeometricfibers} and the proof of Proposition~\ref{prop:surjectivepicard}. It is clear that the invertible sections of $\mathcal{H}_{r,g,n}^{\text{far}}$ are restrictions of invertible global sections of $\widehat{\mathcal{H}}_{r,g,n}^{\text{far}}$. This follows from the fact that $\mathcal{H}_{r,g,n}^{\text{far}}$ is the complement of the irreducible hypersurface $\mathrm{V}(\Phi_{r,g,rd+1})$ in $\widehat{\mathcal{H}}_{r,g,n}^{\text{far}}$. Moreover, we know that the projection to the first factor $q_1:\widehat{\mathcal{H}}_{r,g,n}^{\text{far}}\rightarrow\mathbb{A}^{n-3}\setminus\widetilde{\Delta}$ is flat, proper and with geometrically integral fibers, see Remark~\ref{rmk:integralgeometricfibers}. Therefore,
			\[
				{q_1}_*\mathcal{O}_{\widehat{\mathcal{H}}_{r,g,n}^{\text{far}}}\simeq\mathcal{O}_{\mathbb{A}^{n-3}\setminus\widetilde{\Delta}}
			\]
			hence all the invertible global sections of $\mathcal{H}_{r,g,n}^{\text{far}}$ come from $\mathbb{A}^{n-3}\setminus\widetilde{\Delta}$. It is immediate to see that these are of the form shown in the statement of this Lemma. Finally, if $\beta$ is set to 1, we have $2(n-3)+\binom{n-3}{2}=n(n-3)/2$ coefficients, which proves the statement.
		\end{proof}			
		\begin{proposition}\label{prop:injectivityfirstmap}
			Let $n\geq2$. Then, the sequence coming from~\eqref{eq:exactmain}
			\[
			\begin{tikzcd}
				0\arrow[r] & \mathbb{Z}^{(r-2)\binom{n}{2}+n}\arrow[r] & \mathrm{Pic}(\mathcal{H}_{r,g,n})\arrow[r] & \mathrm{Pic}(\mathcal{H}_{r,g,n}^{\text{far}})\arrow[r] & 0
			\end{tikzcd}
			\]
			is exact.
		\end{proposition}
		\begin{proof}
			Notice that if a relation between the $[\mathcal{Z}_{r,g,n}^{i,j,l}]$ holds for $n$ then it holds for all $n'\geq n$, of course changing the subscripts. Therefore, we may always assume $n\geq rd+1$.
			Let $\mathbb{Z}^{(r-1)\binom{n}{2}}\rightarrow\mathrm{Pic}(\mathcal{H}_{r,g,n})$ be the map induced by all the $[\mathcal{Z}_{r,g,n}^{i,j,l}]$. We already know that there exists a complex with injective first map
			\begin{equation}\label{eq:injectivityrelations}
				\begin{tikzcd}
					0\arrow[r] & \mathbb{Z}^{\binom{n}{2}-n}\arrow[r] & \mathbb{Z}^{(r-1)\binom{n}{2}}\arrow[r] & \mathrm{Pic}(\mathcal{H}_{r,g,n})
				\end{tikzcd}
			\end{equation}
			where $\mathbb{Z}^{\binom{n}{2}-n}$ is generated by the relations in Corollary~\ref{cor:relationsZgnij}. Moreover, the quotient of $\mathbb{Z}^{(r-1)\binom{n}{2}}$ by that subgroup is free of rank $(r-2)\binom{n}{2}+n$.
			We want to show that the above sequence is also exact in the middle. Let $K_{r,g,n}$ be the kernel of the second map. Since in a reduced and separated scheme two rational functions coinciding on an open dense subscheme are actually equal, we have an injective map
			\[
			\begin{tikzcd}
				K_{r,g,n}\arrow[r,hookrightarrow] & Q_{r,g,n}
			\end{tikzcd}
			\]
			sending a relation to the induced invertible global section in $\mathcal{H}_{r,g,n}^{\text{far}}$. By Lemma~\ref{lem:invertibleglobalsectionsHgnfarn>=2g+3}, we know that $Q_{r,g,n}\simeq\mathbb{Z}^{n(n-3)/2}$. On the other hand, $n(n-3)/2=\binom{n}{2}-n$, therefore the injective composition
			\[
			\begin{tikzcd}
				\mathbb{Z}^{\binom{n}{2}-n}\arrow[r,hookrightarrow] & K_{r,g,n}\arrow[r,hookrightarrow] & Q_{r,g,n}\simeq\mathbb{Z}^{n(n-3)/2}
			\end{tikzcd}
			\]
			is also surjective after tensoring by $\mathbb{Q}$, hence $K_{r,g,n}\simeq\mathbb{Z}^{\binom{n}{2}-n}$. Together with the fact that the quotient of $\mathbb{Z}^{(r-1)\binom{n}{2}}$ by the image of $\mathbb{Z}^{\binom{n}{2}-n}$ is free of rank $(r-2)\binom{n}{2}+n$, this implies that the sequence~\eqref{eq:injectivityrelations} is exact.
		\end{proof}
		\subsection{Reconstruction of $\mathrm{Pic}(\mathcal{H}_{r,g,n})$}
		Now, we try to construct a section of the homomorphism $\mathrm{Pic}(\mathcal{H}_{r,g,n})\rightarrow\mathrm{Pic}(\mathcal{H}_{r,g,n}^{\text{far}})$ for $n\geq3$, which will be possible only for $d$ even (for $n=2$ we already know that it exists, see Lemma~\ref{lem:comparisonpicardHg2farHg1}). To do this, we compare $\mathrm{Pic}(\mathcal{H}_{r,g,n}^{\text{far}})$ with $\mathrm{Pic}(\mathcal{H}_{r,g})$, computed in~\cite[Theorem 5.1]{AV04}. In the case $n=0$ the exact sequence~\eqref{eq:exactgeneralHgnfar} reads
		\[
		\begin{tikzcd}
			{[\Delta_{r,g,0}]}\mathbb{Z}\arrow[r] & \widehat{G}_{r,g,0}\arrow[r] & \mathrm{Pic}(\mathcal{H}_{r,g})\arrow[r] & 0
		\end{tikzcd}
		\]
		where $\widehat{G}_{r,g,0}\hookrightarrow\widehat{\mathrm{GL}_2}\simeq\widehat{\mathbb{G}_m}\simeq\mathbb{Z}$ generated by the determinant. The index of that subgroup is $d/2$ if $d$ is even, and $d$ if it is odd. See~\cite[Theorem 5.1]{AV04} for more details. Consider the map $q_{n,0}:\mathcal{H}_{r,g,n}^{\text{far}}\rightarrow\mathcal{H}_{r,g}$ given by forgetting the sections. Then, we have a commutative diagram
		\begin{equation}\label{diag:comparisonpicardHgnfarHg0partial}
			\begin{tikzcd}
				{[\Delta_{r,g,0}]}\mathbb{Z}\arrow[r] & \widehat{G}_{r,g,0}\arrow[r]\arrow[d] & \mathrm{Pic}(\mathcal{H}_{r,g})\arrow[r]\arrow[d,"q_{n,0}^*"] & 0\\
				{[\Delta_{r,g,n}]}\mathbb{Z}\arrow[r] & \widehat{G}_{r,g,n}\arrow[r] & \mathrm{Pic}(\mathcal{H}_{r,g,n}^{\mathrm{far}})\arrow[r] & 0
			\end{tikzcd}
		\end{equation}
		where $\widehat{G}_{r,g,0}\rightarrow\widehat{G}_{r,g,n}$ is induced by the inclusion $\mathbb{G}_m\subset\mathrm{GL}_2$ as the subgroup of diagonal matrices with equal non-zero coefficients. We know that the composite $\mathbb{Z}\simeq\widehat{\mathrm{GL}_2}\rightarrow\widehat{\mathbb{G}_m}\simeq\mathbb{Z}$ is the multiplication by 2. Therefore, $\widehat{G}_{r,g,0}\rightarrow\widehat{G}_{r,g,n}$ is an isomorphism if and only if $d$ is even, while it is just injective if $d$ is odd, with image of index 2. On the other hand, the image of $[\Delta_{r,g,0}]$ in $\widehat{G}_{r,g,n}\simeq\mathbb{Z}$ is always $2r(rd-1)$, hence we can complete the diagram~\eqref{diag:comparisonpicardHgnfarHg0partial} as
		\begin{equation}\label{diag:comparisonpicardHgnfarHg0complete}
			\begin{tikzcd}
				{[\Delta_{r,g,0}]}\mathbb{Z}\arrow[r]\arrow[d,"\simeq"] & \widehat{G}_{r,g,0}\arrow[r]\arrow[d] & \mathrm{Pic}(\mathcal{H}_{r,g})\arrow[r]\arrow[d,"q_{3,0}^*"] & 0\\
				{[\Delta_{r,g,n}]}\mathbb{Z}\arrow[r] & \widehat{G}_{r,g,n}\arrow[r] & \mathrm{Pic}(\mathcal{H}_{r,g,n}^{\mathrm{far}})\arrow[r] & 0
			\end{tikzcd}
		\end{equation}
		where the first vertical map is an isomorphism. Putting everything together, we get the following Lemma.
		\begin{lemma}\label{lemma:comparisonpicardHgnfarHg0}
			Let $3\leq n\leq rd+1$. Then, the composite
			\[
			\begin{tikzcd}[row sep=small, column sep=small]
				\mathcal{H}_{r,g,n}^{\mathrm{far}}\arrow[r,phantom,"\subset"] & \mathcal{H}_{r,g,n}\arrow[rr] & & \mathcal{H}_{r,g}
			\end{tikzcd}
			\]
			induces a commutative diagram
			\[
			\begin{tikzcd}[row sep=normal, column sep=small]
				\mathrm{Pic}(\mathcal{H}_{r,g})\arrow[rd]\arrow[rr,hookrightarrow] & & \mathrm{Pic}(\mathcal{H}^{\text{far}}_{r,g,n})\\
				& \mathrm{Pic}(\mathcal{H}_{r,g,n})\arrow[ur]
			\end{tikzcd}
			\]
			where the horizontal map is injective, and it is an isomorphism if $d$ is even.\qed
		\end{lemma}
		To extend the above Lemma to $n\geq rd+1$, we use Corollary~\ref{cor:injectivepicard}, which gives us the following commutative diagram
		\begin{equation}\label{diag:partial}
			\begin{tikzcd}
				& \mathrm{Pic}(\mathcal{H}^{\text{far}}_{r,g,n})\arrow[r,"\simeq"] &
				\mathrm{Pic}(\mathcal{H}^{\text{far}}_{r,g,n+1})\\
				\mathrm{Pic}(\mathcal{H}_{r,g})\arrow[ru,hookrightarrow]\arrow[rd]\\
				& \mathrm{Pic}(\mathcal{H}_{r,g,n})\arrow[uu,twoheadrightarrow]\arrow[r] & \mathrm{Pic}(\mathcal{H}_{r,g,n+1})\arrow[uu,twoheadrightarrow]
			\end{tikzcd}
		\end{equation}
		where the horizontal maps are induced by forgetting the last section. By induction, this immediately implies that Lemma~\ref{lemma:comparisonpicardHgnfarHg0} holds for all $n\geq rd+1$ too.
		\begin{lemma}\label{lem:strongcomparisonpicardHgnfarHg0}
			Let $n\geq3$. Then, the composite
			\[
			\begin{tikzcd}[row sep=small, column sep=small]
				\mathcal{H}_{r,g,n}^{\mathrm{far}}\arrow[r,phantom,"\subset"] & \mathcal{H}_{r,g,n}\arrow[rr] & & \mathcal{H}_{r,g}
			\end{tikzcd}
			\]
			induces a commutative diagram
			\[
			\begin{tikzcd}[row sep=normal, column sep=small]
				\mathrm{Pic}(\mathcal{H}_{r,g})\arrow[rd]\arrow[rr,hookrightarrow] & & \mathrm{Pic}(\mathcal{H}^{\text{far}}_{r,g,n})\\
				& \mathrm{Pic}(\mathcal{H}_{r,g,n})\arrow[ur]
			\end{tikzcd}
			\]
			where the horizontal map is injective, and it is an isomorphism if $d$ is even.\qed
		\end{lemma}		
		Therefore, in the case $d$ even the exact sequence of Proposition~\ref{prop:injectivityfirstmap} splits, and we obtain $\mathrm{Pic}(\mathcal{H}_{r,g,n})$.
		
		Now, suppose that $d$ is odd and consider the exact sequence~\eqref{eq:exactmain}
		\begin{equation*}
			\begin{tikzcd}
				0\arrow[r] & \mathbb{Z}^{(r-2)\binom{n}{2}+n}\arrow[r] & \mathrm{Pic}(\mathcal{H}_{r,g,n})\arrow[r] & \mathrm{Pic}(\mathcal{H}_{r,g,n}^{\text{far}})\arrow[r] & 0
			\end{tikzcd}
		\end{equation*}
		which we know to be exact on the left thanks to Proposition~\ref{prop:injectivityfirstmap}. Then, by Lemma~\ref{lem:strongcomparisonpicardHgnfarHg0},
		\[
			\mathrm{Pic}(\mathcal{H}_{r,g,n})\simeq\mathbb{Z}^{(r-2)\binom{n}{2}+n}\oplus T
		\]
		where $T$ is a torsion group injecting in $\mathrm{Pic}(\mathcal{H}_{r,g,n}^{\text{far}})$ and containing $\mathbb{Z}/r(rd-1)\mathbb{Z}$. Therefore, $T$ is a cyclic group of order $r(rd-1)$ or $2r(rd-1)$; we claim that only the first case occurs. To prove this, we have to introduce another natural divisor.
		\begin{construction}
			Let $n\geq1$. For all $1\leq i\leq n$, define $\mathcal{W}_{r,g,n}^i$ to be the divisor of $\mathcal{H}_{r,g,n}$ given by the points whose images under $\widetilde{\sigma}_i$ are in the ramification locus of the uniform cyclic covers parametrized by those points. Rigorously, it is defined as follows. Recall that $\mathcal{C}_{r,g}^n$ is the $n$-fold fibered product of $\mathcal{C}_{r,g}$ over $\mathcal{H}_{r,g}$. The ramification locus of the map $\mathcal{C}_{r,g}\rightarrow\mathcal{P}_{r,g}$ is a closed substack
			\[
				\mathcal{W}_{r,g,1}^1\subset\mathcal{C}_{r,g}=\mathcal{H}_{r,g,1}
			\]
			which is étale of degree $rd$ over $\mathcal{H}_{r,g}$, since $\mathcal{H}_{r,g}$ parametrizes smooth uniform cyclic covers of the projective line; see~\cite[Proposition 2.5]{AV04}. Then, for $n\geq2$, define
			\[
				\mathcal{W}_{r,g,n}^1:=(\mathcal{W}_{r,g,1}^1\times_{\mathcal{H}_{r,g}}\mathcal{C}_{r,g}^{n-1})\cap\mathcal{H}_{r,g,n}\subset\mathcal{H}_{r,g,n}\subset\mathcal{C}_{r,g}^n.
			\]
			For every other $i$ the definition of $\mathcal{W}_{r,g,n}^i$ is analogous; alternatively, one can consider the automorphism of $\mathcal{H}_{r,g,n}$ which exchanges the $i$-th section with the first.
		\end{construction}
		\begin{remark}
			Notice that for all $i$ the divisor $\mathcal{W}_{r,g,n}^i$ is smooth, since $\mathcal{W}_{r,g,1}^1$ is. In particular, $\mathcal{W}_{r,g,n}^i$ is geometrically reduced. Moreover, using our concrete presentations of $\mathcal{H}_{r,g,n}^{\text{far}}$, it is clear that for all $n$ and $i$ the divisor $\mathcal{W}_{r,g,n}^i\cap\mathcal{H}_{r,g,n}^{\text{far}}$ is defined by the equation $s_i=0$, hence it is irreducible (see also~\cite[Proposition 2.4]{EH21}). It follows that $\mathcal{W}_{r,g,n}^i$ is geometrically integral, since every point in $\mathcal{W}_{r,g,n}^i$ is part of a connected family in $\mathcal{W}_{r,g,n}^i$ intersecting $\mathcal{H}_{r,g,n}^{\text{far}}$. To see this, for $i=1$, take a point $(C\rightarrow\Spec K,p_1,\ldots,p_n)$ with $p_1$ a Weierstrass point and $K$ separably closed. Then we can consider the family $(X\rightarrow S,\sigma_1,\ldots,\sigma_n)$ defined as follows. Take $S$ to be the complement in $C^n$ of the extended diagonal and the points $(q_1,\ldots,q_n)$ with $q_j=p_1$ for some $j>1$, and let $X$ be its preimage in $C^n\times C$ under the projection. Define $\sigma_1$ to be the section with constant value $p_1$, and $\sigma_j$ the section induced by the projection $C^n\rightarrow C$ to the $j$-th component, for $j>1$. The fiber over $(p_1,\ldots,p_n)$ gives back our original point, and there exists a point of $S$ whose fiber corresponds to a point in $\mathcal{H}_{r,g,n}^{\text{far}}$. As $S$ is connected, we are done.
			
			See~\cite{EH21} and~\cite{EH22} for other properties of these stacks, for $r=2$.
			
			Finally, notice that the pullback of $\mathcal{W}_{r,g,n}^i$ under the morphism $\mathcal{H}_{r,g,n+1}\rightarrow\mathcal{H}_{r,g,n}$ which forgets $\widetilde{\sigma}_j$ is $\mathcal{W}_{r,g,n+1}^i$ if $j>i$, and $\mathcal{W}_{r,g,n+1}^{i+1}$ if $j\leq i$.
		\end{remark}
		\begin{lemma}\label{lem:W1n>=3}
			For all $n\geq3$, the image of \/ $\mathcal{W}_{r,g,n}^1$ in $\mathrm{Pic}(\mathcal{H}_{r,g,n}^{\text{far}})$ is a generator.
		\end{lemma}
		\begin{proof}
			Let $3\leq n\leq rd+1$. In the previous remark we have noticed that the restriction $\mathcal{W}_{r,g,n}^1\cap\mathcal{H}_{r,g,n}^{\text{far}}$ is defined in $(\mathbb{A}^{rd+1-n}\times(\mathbb{A}^{n-3}\setminus\widetilde{\Delta})\times\mathbb{A}^n)\setminus\Delta_{r,g,n}$ by the equation $s_1=0$. Since $\mathbb{G}_m/\mu_{d}$ acts with weight $-d$ on $s_1$, the result follows from Proposition~\ref{prop:picardHgnfar3<=n<=2g+3}.
			
			For $n\geq rd+1$ one uses Corollary~\ref{cor:injectivepicard} and the fact that the pullback of $\mathcal{W}_{r,g,n}^1$ under the morphism $\mathcal{H}_{r,g,n+1}\rightarrow\mathcal{H}_{r,g,n}$ forgetting the last section is $\mathcal{W}_{r,g,n+1}^1$.
		\end{proof}
		\begin{lemma}\label{lem:classW}
			Using the conventions of Theorem~\ref{thm:picardHg1}, if $d$ is even then $[\mathcal{W}_{r,g,1}^1]$ in $\mathrm{Pic}(\mathcal{H}_{r,g,1})$ corresponds to
			\[
			(-d/2,1)\in\mathbb{Z}\oplus\mathbb{Z}/2r(rd-1)\mathbb{Z}
			\]
			while if $d$ is odd then it corresponds to
			\[
			(-d,(d+1)/2)\in\mathbb{Z}\oplus\mathbb{Z}/r(rd-1)\mathbb{Z}.
			\]
			The same holds for $[\mathcal{W}_{r,g,2}^1]$ in $\mathrm{Pic}(\mathcal{H}_{r,g,2}^{\text{far}})$.
			
			Similarly, if $d$ is even then $[\mathcal{W}_{r,g,2}^2]$ in $\mathrm{Pic}(\mathcal{H}_{r,g,2}^{\text{far}})$ corresponds to
			\[
			(d/2,1)\in\mathbb{Z}\oplus\mathbb{Z}/2r(rd-1)\mathbb{Z}
			\]
			while if $d$ is odd then it corresponds to
			\[
			(d,-(d-1)/2)\in\mathbb{Z}\oplus\mathbb{Z}/r(rd-1)\mathbb{Z}.
			\]
		\end{lemma}	
		\begin{proof}
			Since $[\mathcal{W}_{r,g,1}^1]$ is defined by the equation $s_1=0$, in $\widehat{\mathrm{B}_2/\mu_{d}}$ it corresponds to $(0,d)$. Then, applying the transformations made in the proof of Theorem~\ref{thm:picardHg1}, we get the first part of the Lemma. Thanks to Lemma~\ref{lem:comparisonpicardHg2farHg1}, the statement holds also in $\mathrm{Pic}(\mathcal{H}_{r,g,2}^{\text{far}})$.
			
			For the second part, one uses again that $[\mathcal{W}_{r,g,2}^2]$ is defined by the equation $s_2=0$ in $\mathcal{H}_{r,g,n}^{\text{far}}$, hence in terms of the characters of $(\mathbb{G}_m\times\mathbb{G}_m)/\mu_{d}$ it corresponds to $(d,0)$, and one proceeds with the same calculations.
		\end{proof}
		The above Lemma shows that in $\mathrm{Pic}(\mathcal{H}_{g,2})$ the sum $[\mathcal{W}_{r,g,2}^1]+[\mathcal{W}_{r,g,2}^2]$ is expressible in terms of the $\mathcal{Z}_{r,g,2}^{1,2,l}$ and some torsion. Now, we compute the coefficients of the $\mathcal{Z}_{r,g,2}^{1,2,l}$ in such expression.
		We use the method of test curves, as done in~\cite{EH21}. To use it efficiently, we want to consider families of cyclic covers over a proper base, hence we cannot work on $\mathcal{H}_{r,g,2}$. There are two possible approaches, the first is to consider some compactification of $\mathcal{H}_{r,g,2}$ using stable curves, as done in~\cite{EH21}, the second uses the open immersion of $\mathcal{H}_{r,g,2}$ in $\mathcal{C}_{r,g}^2=\mathcal{C}_{r,g}\times_{\mathcal{H}_{r,g}}\mathcal{C}_{r,g}$ as the complement of the diagonal. We follow the second approach.
			
		First we compute the Picard group of $\mathcal{C}_{r,g}^2$. Notice that every divisor $\mathcal{W}_{r,g,n}^i$ and $\mathcal{Z}_{r,g,n}^{i,j,l}$ was defined as the intersection of some divisor in $\mathcal{C}_{r,g}^n$ with $\mathcal{H}_{r,g,n}$, so that we can see those divisors of $\mathcal{H}_{r,g,n}$ in $\mathcal{C}_{r,g}^{n}$ in a natural way. Denote by $\mathcal{Z}_{r,g,n}^{i,j,0}$ the diagonal of indices $i,j$ in $\mathcal{C}_{r,g}^n$.
		\begin{lemma}\label{lem:picCrg2}
			The open immersion of $\mathcal{H}_{r,g,2}$ in $\mathcal{C}_{r,g}^2$ induces an exact sequence
			\begin{equation}\label{eq:picCrg2}
				\begin{tikzcd}
					0\arrow[r] & {[\mathcal{Z}_{r,g,2}^{1,2,0}]}\mathbb{Z}\arrow[r] &\mathrm{Pic}(\mathcal{C}_{r,g}^2)\arrow[r] & \mathrm{Pic}(\mathcal{H}_{r,g,2})\arrow[r] & 0
				\end{tikzcd}
			\end{equation}
			which has a splitting.
		\end{lemma}
		\begin{proof}
			The exactness of the sequence follows from the Snake Lemma applied to the following commutative diagram with split exact rows
			\[
			\begin{tikzcd}
				0\arrow[r] & \bigoplus_{l=0}^{r-1}{[\mathcal{Z}_{r,g,2}^{1,2,l}]}\mathbb{Z}\arrow[d]\arrow[r] &\mathrm{Pic}(\mathcal{C}_{r,g}^2)\arrow[d,twoheadrightarrow]\arrow[r] & \mathrm{Pic}(\mathcal{H}_{r,g,2}^{\text{far}})\arrow[d,"="]\arrow[r] & 0\\
				0\arrow[r] & \bigoplus_{l=1}^{r-1}{[\mathcal{Z}_{r,g,2}^{1,2,l}]}\mathbb{Z}\arrow[r] &\mathrm{Pic}(\mathcal{H}_{r,g,2})\arrow[r] & \mathrm{Pic}(\mathcal{H}_{r,g,2}^{\text{far}})\arrow[r] & 0.
			\end{tikzcd}
			\]
			To show that the first row is exact one can proceed in the same way as what we have done for the bottom row. Indeed, using Lemma~\ref{lem:invertibleglobalsectionsHgnfarn>=2g+3} as in Proposition~\ref{prop:injectivityfirstmap} one can show that there are no relations in $\mathrm{Pic}(\mathcal{C}_{r,g}^n)$ between the $[\mathcal{Z}_{r,g,n}^{i,j,l}]$ except the ones we already know, for $n\geq rd+1$. Using the projection $\mathcal{C}_{r,g}^n\rightarrow\mathcal{C}_{r,g}^2$ to the first two factors, one gets the result for $n=2$ too.	Moreover, the upper row splits, as one can show considering an analogous diagram as in Lemma~\ref{lem:comparisonpicardHg2farHg1}.
			
			Finally, the splitting of~\eqref{eq:picCrg2} is induced by seeing $[\mathcal{Z}_{r,g,n}^{1,2,l}]$ as a divisor of $\mathcal{C}_{r,g}^2$, and by the splitting of the upper row of the above diagram.
		\end{proof}
		\begin{remark}
			One can argue in a similar way to obtain an analogous exact sequence for all $\mathcal{C}_{r,g}^n$, but we do not need it.
		\end{remark}
		\begin{lemma}\label{lem:relationWWdZ}
			In $\mathrm{Pic}(\mathcal{H}_{r,g,2})$,
			\[
				[\mathcal{W}_{r,g,2}^1]+[\mathcal{W}_{r,g,2}^2]-d\sum_{l=1}^{r-1}[\mathcal{Z}_{r,g,2}^{1,2,l}]
			\]
			is a torsion element.
		\end{lemma}
		\begin{proof}
			Consider the same problem in $\mathcal{C}_{r,g}^2$. Thanks to Lemma~\ref{lem:classW} and Lemma~\ref{lem:picCrg2}, we know that there exist $a_l\in\mathbb{Z}$ such that
			\[
				[\mathcal{W}_{r,g,2}^1]+[\mathcal{W}_{r,g,2}^2]-\sum_{l=0}^{r-1}a_l[\mathcal{Z}_{r,g,2}^{1,2,l}]
			\]
			is a torsion element in $\mathrm{Pic}(\mathcal{C}_{r,g}^2)$. Notice that both $[\mathcal{W}_{r,g,2}^1]$ and $[\mathcal{W}_{r,g,2}^2]$ are invariant under the action of $\mu_r$ on the second component of $\mathcal{C}_{r,g}^2$, while it permutes the classes $[\mathcal{Z}_{r,g,2}^{1,2,l}]$. Since there are no relations between these last classes, this shows that all $a_l$ are equal to some $a\in\mathbb{Z}$.
			
			Now, we show that $a=d$. We can work over an algebraically closed field $k$. Let $C\rightarrow\mathbb{P}_k^1$ be a smooth uniform cyclic cover of degree $r$ and genus $g$, and let $p\in C$ be a point where the map is unramified. Consider
			\[
				\begin{tikzcd}
					C\times C\arrow[r,"\mathrm{pr}_1"] & C 
				\end{tikzcd}
			\]
			with the two sections $\widetilde{\sigma}_1=\Delta$ and $\widetilde{\sigma}_2=1\times p$. This gives an object of $\mathcal{C}_{r,g}^2$ over $C$. Recall that $\alpha$ is a generator of $\mu_r$. Then $\mathcal{Z}_{r,g,2}^{1,2,l}$ in $C$ is given by $\alpha^{-l}(p)$, where $\alpha^0$ is just the identity. Moreover, $\mathcal{W}_{r,g,2}^2$ is 0, while the degree of $\mathcal{W}_{r,g,2}^1$ is $rd$. Equating the degrees, we get $rd=ra$, hence $a=d$, as wanted.
		\end{proof}
		\begin{remark}
			In the case $r=2$, Lemma~\ref{lem:relationWWdZ} is a corollary of~\cite[Theorem~1.1 and Theorem~1.2]{EH21}. In that case, the coefficient is $g+1$, which agrees with our result.
		\end{remark}
		\begin{lemma}\label{lem:torsiondodd}
			If $d$ is odd, then the torsion component of \/ $\mathrm{Pic}(\mathcal{H}_{r,g,n})$ is a cyclic group of order $r(rd-1)$.
		\end{lemma}
		\begin{proof}
			Consider the three pullbacks $\mathcal{H}_{r,g,3}\rightarrow\mathcal{H}_{r,g,2}$ given by forgetting one of the sections. Then the relation of Lemma~\ref{lem:relationWWdZ} gives us three similar relations, with different indices. Combining them, we get that, modulo torsion,
			\begin{align*}
				2[\mathcal{W}_{r,g,3}^1]&=([\mathcal{W}_{r,g,3}^1]+[\mathcal{W}_{r,g,3}^2])-([\mathcal{W}_{r,g,3}^2]+[\mathcal{W}_{r,g,3}^3])+([\mathcal{W}_{r,g,3}^1]+[\mathcal{W}_{r,g,3}^3])\\
				&=d\sum_{l=1}^{r-1}([\mathcal{Z}_{r,g,3}^{1,2,l}]-[\mathcal{Z}_{r,g,3}^{2,3,l}]+[\mathcal{Z}_{r,g,3}^{1,3,l}]).
			\end{align*}
			Recall that, for all $n\geq3$, in $\mathrm{Pic}(\mathcal{H}_{r,g,n})$ there are no other relations between the $\mathcal{Z}_{r,g,n}^{i,j,l}$ except those of Corollary~\ref{cor:relationsZgnij}. Since $d$ is not divisible by 2, it follows that $[\mathcal{W}_{r,g,n}^1]$ is not generated by the $\mathcal{Z}_{r,g,n}^{i,j,l}$ together with the torsion. It follows that the exact sequence of Proposition~\ref{prop:injectivityfirstmap} cannot be split in the case of $d$ odd, and we are done.
		\end{proof}
		Finally, putting everything together we get the main Theorem.		
		\begin{theorem}
			Suppose that the ground field $k$ is of characteristic not dividing $2rd$.
			\begin{itemize}
				\item If $d$ is even, then \/ $\mathrm{Pic}(\mathcal{H}_{r,g,n})\simeq\mathbb{Z}^{(r-2)\binom{n}{2}+n}\oplus\mathbb{Z}/2r(rd-1)\mathbb{Z}$.
				\item If $d$ is odd, then \/ $\mathrm{Pic}(\mathcal{H}_{r,g,n})\simeq\mathbb{Z}^{(r-2)\binom{n}{2}+n}\oplus\mathbb{Z}/r(rd-1)\mathbb{Z}$.
			\end{itemize}
			If $n<2$, we set $\binom{n}{2}$ to be 0. Moreover, the torsion part always comes from $\mathcal{H}_{r,g}$.\qed
		\end{theorem}
		\begin{proof}
			The case $n=0$ is already known, see~\cite[Theorem 5.1]{AV04}, while the case $n=1$ is exactly Theorem~\ref{thm:picardHg1}. The exact sequence~\eqref{eq:exactmain2}, Proposition~\ref{prop:injectivityfirstmap}, the splitting given by Lemma~\ref{lem:comparisonpicardHg2farHg1}, and Proposition~\ref{prop:picardHgnfar2} take care of the case $n=2$.
			The case $n\geq3$ reduces to reconstructing $\mathrm{Pic}(\mathcal{H}_{r,g,n})$ from the informations given by the exact sequence~\eqref{eq:exactmain} together with Proposition~\ref{prop:injectivityfirstmap}, Proposition~\ref{prop:picardHgnfar3<=n<=2g+3} and Proposition~\ref{prop:surjectivepicard}. We conclude using Lemma~\ref{lem:strongcomparisonpicardHgnfarHg0} and Lemma~\ref{lem:torsiondodd}, depending on the parity of $d$.
		\end{proof}
		It is worthwhile to explicitly state the result for the case of hyperelliptic curves, i.e. $r=2$, since it is the one of greatest geometric interest.
		\begin{corollary}
			Suppose that the ground field $k$ is of characteristic not dividing $2(g+1)$.
			\begin{itemize}
				\item If $g$ is odd, then \/ $\mathrm{Pic}(\mathcal{H}_{g,n})\simeq\mathbb{Z}^{n}\oplus\mathbb{Z}/(8g+4)\mathbb{Z}$.
				\item If $g$ is even, then \/ $\mathrm{Pic}(\mathcal{H}_{g,n})\simeq\mathbb{Z}^{n}\oplus\mathbb{Z}/(4g+2)\mathbb{Z}$.
			\end{itemize}
			In both cases, the torsion part comes from $\mathcal{H}_{g}$.\qed
		\end{corollary}
		\subsection{Generators of the Picard group}\label{subsec:generators}
		In this subsection we describe a minimal set of generators of the Picard group of $\mathcal{H}_{r,g,n}$ for $n\geq3$. The first description works only when $d$ is even.
		\begin{proposition}\label{prop:generatorsdeven}
			Let $n\geq3$ and let $d$ be even. Denote with $t$ a generator of $\mathrm{Pic}(\mathcal{H}_{r,g})$, which is hence of order $2r(rd-1)$. By abuse of notation we still denote its pullback in $\mathrm{Pic}(\mathcal{H}_{r,g,n})$ with the same letter, for all $n$. Then
			\[
			\{[\mathcal{Z}_{r,g,n}^{i,j,l}]\}_{\substack{1\leq i< j\leq n \\ 2\leq l\leq r-1}} \cup\{[\mathcal{Z}_{r,g,n}^{1,j,1}]\}_{2\leq j\leq n}\cup\{[\mathcal{Z}_{r,g,n}^{2,3,1}]\}\cup\{t\}
			\]
			is a minimal set of generators, with the only relation \/ $2r(rd-1)t=0$.
		\end{proposition}
		\begin{proof}
			It follows immediately from the results of Section~\ref{sec:finalcomputation}.
		\end{proof}		
		The case when $d$ is odd is different. Indeed, we know that the set given in the above proposition does not generate, since it misses $\mathcal{W}_{r,g,n}^1$; see the proof of Lemma~\ref{lem:torsiondodd}. Our results actually show that this is the only element missed, and that $t$ is expressible in terms of
		\[
		\{[\mathcal{Z}_{r,g,n}^{i,j,l}]\}_{\substack{1\leq i< j\leq n \\ 2\leq l\leq r-1}} \cup\{[\mathcal{Z}_{r,g,n}^{1,j,1}]\}_{2\leq j\leq n}\cup\{[\mathcal{Z}_{r,g,n}^{2,3,1}]\}\cup\{[\mathcal{W}_{r,g,n}^1]\}
		\]
		which is thus a set of generators of the Picard group. Again, there is only one relation (except for its multiples), which comes from Lemma~\ref{lem:relationWWdZ}. The same holds also in the $d$ even case, since $\mathcal{W}_{r,g,n}^1$ always gives a generator of $\mathrm{Pic}(\mathcal{H}_{r,g,n}^{\text{far}})$, see Lemma~\ref{lem:W1n>=3}.		
		\begin{proposition}\label{prop:generators}
			Let $n\geq3$. Then
			\[
			\{[\mathcal{Z}_{r,g,n}^{i,j,l}]\}_{\substack{1\leq i< j\leq n \\ 2\leq l\leq r-1}} \cup\{[\mathcal{Z}_{r,g,n}^{1,j,1}]\}_{2\leq j\leq n}\cup\{[\mathcal{Z}_{r,g,n}^{2,3,1}]\}\cup\{[\mathcal{W}_{r,g,n}^1]\}
			\]
			is a minimal set of generators. Moreover,
			\begin{equation}
				2r(rd-1)[\mathcal{W}_{r,g,n}^1]=rd(rd-1)\sum_{l=1}^{r-1}([\mathcal{Z}_{r,g,n}^{1,2,l}]+[\mathcal{Z}_{r,g,n}^{1,3,l}]-[\mathcal{Z}_{r,g,n}^{2,3,l}])
			\end{equation}
			is the only relation, regardless of the parity of $d$.\qed
		\end{proposition}

	\bibliographystyle{amsalpha}
	\bibliography{library}
	
\end{document}